\begin{document}
\bibliographystyle{plain}
\newcommand{\bea}{\begin{eqnarray}}
\newcommand{\eea}{\end{eqnarray}}
\newcommand{\bfmN}{{\mbox{\boldmath{$N$}}}}
\newcommand{\bfmx}{{\mbox{\boldmath{$x$}}}}
\newcommand{\bfmv}{{\mbox{\boldmath{$v$}}}}
\newcommand{\se}{\setcounter{equation}{0}}
\newtheorem{corollary}{Corollary}[section]
\newtheorem{example}{Example}[section]
\newtheorem{definition}{Definition}[section]
\newtheorem{theorem}{Theorem}[section]
\newtheorem{proposition}{Proposition}[section]
\newtheorem{lemma}{Lemma}[section]
\newtheorem{remark}{Remark}[section]
\newtheorem{result}{Result}[section]
\newcommand{\vtwo}{\vskip 4ex}
\newcommand{\vthree}{\vskip 6ex}
\newcommand{\vfour}{\vspace*{8ex}}
\newcommand{\hone}{\mbox{\hspace{1em}}}
\newcommand{\hon}{\mbox{\hspace{1em}}}
\newcommand{\htwo}{\mbox{\hspace{2em}}}
\newcommand{\hthree}{\mbox{\hspace{3em}}}
\newcommand{\hfour}{\mbox{\hspace{4em}}}
\newcommand{\von}{\vskip 1ex}
\newcommand{\vone}{\vskip 2ex}
\newcommand{\n}{\mathfrak{n} }
\newcommand{\m}{\mathfrak{m} }
\newcommand{\q}{\mathfrak{q} }
\newcommand{\aF}{\mathfrak{a} }

\newcommand{\kl}{\mathcal{K}}
\newcommand{\p}{\mathcal{P}}
\newcommand{\Lt}{\mathcal{L}}
\newcommand{\bv}{{\mbox{\boldmath{$v$}}}}
\newcommand{\bc}{{\mbox{\boldmath{$c$}}}}
\newcommand{\bx}{{\mbox{\boldmath{$x$}}}}
\newcommand{\br}{{\mbox{\boldmath{$r$}}}}
\newcommand{\bs}{{\mbox{\boldmath{$s$}}}}
\newcommand{\bb}{{\mbox{\boldmath{$b$}}}}
\newcommand{\ba}{{\mbox{\boldmath{$a$}}}}
\newcommand{\bn}{{\mbox{\boldmath{$n$}}}}
\newcommand{\bp}{{\mbox{\boldmath{$p$}}}}
\newcommand{\by}{{\mbox{\boldmath{$y$}}}}
\newcommand{\bz}{{\mbox{\boldmath{$z$}}}}
\newcommand{\be}{{\mbox{\boldmath{$e$}}}}
\newcommand{\proof}{\noindent {\sc Proof :} \par }
\newcommand{\bP}{{\mbox{\boldmath{$P$}}}}

\newcommand{\M}{\mathcal{M}}
\newcommand{\R}{\mathbb{R}}
\newcommand{\Q}{\mathbb{Q}}
\newcommand{\Z}{\mathbb{Z}}
\newcommand{\N}{\mathbb{N}}
\newcommand{\C}{\mathbb{C}}
\newcommand{\xar}{\longrightarrow}
\newcommand{\ov}{\overline}
 \newcommand{\rt}{\rightarrow}
 \newcommand{\om}{\omega}
 \newcommand{\wh}{\widehat }
 \newcommand{\wt}{\widetilde }
 \newcommand{\g}{\Gamma}
 \newcommand{\lm}{\lambda}

\newcommand{\eN}{\EuScript{N}}
\newcommand{\ncom}{\newcommand}
\newcommand{\norm}{\|\;\;\|}
\newcommand{\inp}[2]{\langle{#1},\,{#2} \rangle}
\newcommand{\nrm}[1]{\parallel {#1} \parallel}
\newcommand{\nrms}[1]{\parallel {#1} \parallel^2}
\title{On the convergence of quasilinear viscous approximations using Compensated Compactness and Kinetic Formulation}
\author{ Ramesh Mondal\\ Email: rmondal86@gmail.com\\ Department of Mathematics, University of Kalyani, Kalyani, India-741235.\footnote{Keywords: Conservation laws, Quasilinear Viscous Approximations.}}
\maketitle{}
\begin{abstract}
We use the method of Compensated Compactness and Kinteic Formulation to show that the almost everywhere limit of quasilinear viscous approximations is the unique entropy
solution (in the sense of {\it F. Otto}) of the corresponding scalar conservation laws on a bounded domain in $\mathbb{R}^{d}$, where the viscous term is
of the form $\varepsilon\,div\left(B(u^{\varepsilon})\nabla u^{\varepsilon}\right)$.
 \end{abstract}
 \section{Introduction}
 Let $\Omega$ be a bounded domain in $\mathbb{R}^{d}$ with smooth boundary $\partial \Omega$. For $T >0$, denote $\Omega_{T}:= \Omega\times(0,T)$. 
 We write the initial boundary value problem $\left(\mbox{IBVP}\right)$ for scalar conservation laws given by
 \begin{subequations}\label{ivp.cl}
\begin{eqnarray}
  u_t + \nabla\cdot f(u) =0& \mbox{in }\Omega_T,\label{ivp.cl.a}\\
u(x,t)= 0&\mbox{on}\,\,\partial \Omega\times(0,T),\label{ivp.cl.b}\\
  u(x,0) = u_0(x)& x\in \Omega.\label{ivp.cl.c}
  \end{eqnarray}
\end{subequations}
where $f=(f_{1},f_{2},\cdots,f_{d})$ is the flux function and $u_{0}$ is the initial condition.\\
Consider the IBVP for the generalized viscosity problem 
\begin{subequations}\label{regularized.IBVP}
\begin{eqnarray}
 u^\varepsilon_{t} + \nabla \cdot f(u^{\varepsilon}) = \varepsilon\,\nabla\cdot\left(B(u^\varepsilon)\,\nabla u^\varepsilon\right)
 &\mbox{in }\Omega_{T},\label{regularized.IBVP.a} \\
    u^\varepsilon(x,t)= 0&\,\,\,\,\mbox{on}\,\, \partial \Omega\times(0,T),\label{regularized.IBVP.b}\\
u^{\varepsilon}(x,0) = u_{0\varepsilon}(x)& x\in \Omega,\label{regularized.IBVP.c}
\end{eqnarray}
\end{subequations}
indexed by $\varepsilon>0$. The problem of the form \eqref{regularized.IBVP.a} is posed in \cite{MR2150387}. The aim of this article is to prove that the {\it a.e.} limit of sequence of solutions $\left(u^{\varepsilon}\right)$ to \eqref{regularized.IBVP}(called quasilinear viscous approximations) is the unique entropy solution for IBVP \eqref{ivp.cl} in the sense of Otto \cite{MR1387428}. \\
The equation of the form \eqref{regularized.IBVP.a} appears in many physical systems. For instances, it appears in viscous shallow water problem \cite{GQCHEN} and in the equations of gas dynamics for viscous, heat conducting fluid in eulerian coordinates \cite{JSmoller}. Apart from above mentioned physical systems, the equation of the form \eqref{regularized.IBVP.a} is very important in the study of numerical analysis. Kurganov and  Liu \cite{AKurganov} proposed a finite volume method for solving general multidimentional system of conservation laws. In which they introduce an adaptive way of adding viscosity in the shock region. As a result the scheme captures numerically stable solution of hyperbolic conservation laws. The coefficient of the added numerical viscosity appears as a function of the solution and hence the convergence of the scheme to the entropy solution is similar to the problem considered in this article. Mishra and Spinolo \cite{SMishra} have designed schemes for systems in one space dimension that incorporate the explicit information about the underlying viscous operators. Consequently, these schemes approximate the viscosity solution of conservation laws.\\
\vspace{0.1cm}\\
Let us write the hypothesis on $f,\,B,\,$ and $u_{0}$. \\
\noindent{\bf Hypothesis D}
\begin{enumerate}
	\item[(a)] Let $f\in \left(C^{2}(\mathbb{R})\right)^d$, $f^\prime\in \left(L^\infty(\mathbb{R})\right)^d$, and denote 
	$$\|f^\prime\|_{\left(L^\infty(\mathbb{R})\right)^d}:=\displaystyle\max_{1\leq j\leq d}\left(\sup_{y\in\mathbb{R}}|f^\prime(y)|\right).$$
	\item[(b)] Let $B\in C^1(\mathbb{R})\cap L^\infty(\mathbb{R})$, and there exists an $r>0$ such that $B\geq r$.
	\item[(c)] Let $u_{0}$ be in $ L^{\infty}(\Omega)$. There exists a sequence $\left(u_{0\varepsilon}\right)\in\mathcal{D}(\Omega)$ and a constant $A>0$ such that $\|u_{0\varepsilon}\|\leq A$. Denote $I:=[-A, A]$. 
	\item[(d)] Let $1<p\leq 2$ and $p^{\prime}>1$ be such that $\frac{1}{p}+\frac{1}{p^{\prime}}=1$. Let $\psi\in L^{p^{\prime}}\left(\mathbb{R}\right)$ having essential compact support.
	Let $f$ satisfy the following 
	\begin{equation*}
	\begin{split}
	\mbox{meas}\Big\{c\in\mbox{supp}\,\psi,\,\,\tau+\,\left(f_{1}^{\prime}(c),f_{2}^{\prime}(c),\cdots,f_{d}^{\prime}(c)\right)\cdot\xi=0\Big\}=0\,\, \\
	\mbox{for all}\,\left(\tau,\xi\right)\in\mathbb{R}\times\mathbb{R}^{d}\,\, \mbox{with}\,\,\tau^{2} +\left|\xi\right|^{2}=1.
	\end{split}
	\end{equation*}
\end{enumerate}
We now give another hypothesis on $f,\,B,\,u_{0}$.\\
\noindent{\bf Hypothesis E:}
\begin{enumerate}
	\item[(a)] Let $f\in C^2(\R)$, $f^\prime\in L^\infty(\R)$, and denote 
	$$\|f^\prime\|_{L^\infty(\R)}:=\sup_{y\in\R}|f^\prime(y)|.$$
	\item[(b)] Let $B\in C^1(\R)\cap L^\infty(\R)$, and there exists an $r>0$ such that $B\geq r$.
	\item[(c)] Let $u_{0}$ be in $ L^{\infty}(\Omega)$. There exists a sequence $\left(u_{0\varepsilon}\right)\in\mathcal{D}(\Omega)$ and a constant $A>0$ such that $\|u_{0\varepsilon}\|\leq A$. Denote $I:=[-A, A]$. 
	\item[(d)] Let $d=1$ and the set $\left\{z\in\mathbb{R}\,\,:\,\,f^{\prime\prime}(z)\neq 0\right\}$ is dense in $\mathbb{R}$,\,{\it i.e.}, the derivative of the flux function $f$ satisfies the following property: Restriction of 
	$f^{\prime}:\mathbb{R}\to\mathbb{R}$ to any non-degenerate interval is a nonconstant function.
\end{enumerate}
\vspace{0.1cm}
We now give the final hypothesis on $f,\,B,\,u_{0}$.\\
\textbf{Hypothesis F:}
\begin{enumerate}
	\item[(a)] Let $f\in \left(C^2(\R)\right)^d$, $f^\prime\in \left(L^\infty(\R)\right)^d$, and denote 
	$$\|f^\prime\|_{\left(L^\infty(\R)\right)^d}:=\max_{1\leq j\leq d}\,\sup_{y\in\R}|f^\prime_j(y)|.$$
	\item[(b)] Let $B\in C^1(\R)\cap L^\infty(\R)$, and there exists an $r>0$ such that $B\geq r$.
	\item[(c)] Let $d=2$ and the functions $f_{1}^{\prime},\,f^{\prime}_{2}:\mathbb{R}\to\mathbb{R}$ are linearly independent, {\it i.e.,} for all 
	$\xi\in\mathbb{R}^{2}$ with $|\xi|=1$, the following quantity 
	$$S(\xi,\cdot):=\xi_{1}f_{1}^{\prime} +\xi_{2}f_{2}^{\prime},$$
	is not zero indentically on $\mathbb{R}$.
	\item[(d)] $u_0\in BV_0(\Omega)\cap L^\infty(\Omega)$, where $BV_0(\Omega)$ denotes the space of functions of bounded variation, having zero trace on $\partial\Omega$ (see \cite{PhucT2017} for details), and $u_{0\varepsilon}$ as constructed in the next section having the properties stated in Lemma~\ref{regularized.lem1}. and we denote 
	$I:=[-A, A]$.
\end{enumerate}
For hyperbolic problems posed on bounded domain, it is well known that any smooth solution is constant on any maximal segment of characteristic. We assume that this segment intersects $\Omega\times\left\{0\right\}$ and $\partial\Omega\times (0,T)$. The informations which propagate through this segment from initial datum to the boundary $\partial\Omega\times (0,T)$ may not match with the prescribed boundary condition. In other words IBVP for hyperbolic problems are ill-posed in general \cite{MR542510}. Therefore it is incorrect to assume boundary data pointwise. Thus we need to give a meaning to the way in which the boundary conditions are realized. In BV-setting, Bardos-LeRoux-Nedelec prove the existence, uniqueness of entropy solutions by introducing a formulation of entropy solutions ( BLN entropy condition) including boundary term. BLN entropy condition gives a way to interpret the boundary conditions whenever the entropy solution is of bounded variations. Since the BLN condition uses the trace of the solution on the boundary $\partial\Omega\times (0,T)$, it is not relevant for all those solutions which are not of bounded variations. Otto \cite{MR1387428} generalizes the BLN entropy condition to the $L^{\infty}-$ setting by giving a new concept of entropy solutions. In this approach Otto introduces ``boundary entropy-entropy flux" pair and ask the boundary condition to hold in an integral form. In the $L^{\infty}-$ setting Otto also proves the existence, uniqueness of entropy solution satisfying the concept of entropy introduced by him. Therefore Otto's entropy condition gives another way to interpret the boundary conditions. For extensions of the concepts of entropy solution to such contexts, we refer the reader to Martin \cite{Martin}, Carrillo \cite{Carrillo}, Vallet \cite{Vallet} in the $L^{\infty}$-setting, and to Porretta and Vovelle \cite{Porretta}, Ammar et al. \cite{Ammar} in $L^{1}$-setting. For system of hyperbolic equations, Dubois and LeFloch \cite{Dubois_Lefloch} generalizes the boundary-entropy inequality of Bardos-LeRoux-Nedelec by introducing a boundary entropy inequality in the domain $\left\{(x,t)\,:\,x>0,\,t>0\right\}$ for general viscosity term as considered in this article.
\vspace{0.1cm}\\
In this context, we have the following three main results. The main differences of these results are on the assumptions on the initial data $u_{0}$ and the space dimension $d$ in Hypothesis D, Hypothesis E, Hypothesis F. 
\begin{theorem}\label{paper2.compensatedcompactness.theorem1}
 {\rm Let $f,\,B,\,u_{0}$ satisfy Hypothesis D. Then the {\it a.e.} limit of the quasilinear viscous approximations 
 $\left(u^{\varepsilon}\right)$ is the unique entropy solution of IBVP \eqref{ivp.cl} in the sense of {\it Otto}\cite{MR1387428}.}
\end{theorem}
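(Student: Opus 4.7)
The plan is to exploit the kinetic formulation of \eqref{regularized.IBVP} together with the non-degeneracy condition in Hypothesis~D.d to extract an a.e.\ convergent subsequence via velocity averaging, and then to identify the limit as the unique Otto entropy solution. First, since $B\ge r>0$, equation \eqref{regularized.IBVP.a} is uniformly parabolic for each fixed $\varepsilon$, so the maximum principle (combined with Hypothesis~D.c) gives $\|u^\varepsilon\|_{L^\infty(\Omega_T)}\le A$ uniformly in $\varepsilon$ and $u^\varepsilon(\Omega_T)\subset I$. Multiplying \eqref{regularized.IBVP.a} by $\eta'(u^\varepsilon)$ for a smooth convex entropy $\eta$ with flux $q$ satisfying $q'=\eta' f'$ yields the entropy identity
\begin{equation*}
\partial_t \eta(u^\varepsilon)+\nabla\cdot q(u^\varepsilon)=\varepsilon\,\nabla\cdot\bigl(B(u^\varepsilon)\eta'(u^\varepsilon)\nabla u^\varepsilon\bigr)-\varepsilon\,B(u^\varepsilon)\eta''(u^\varepsilon)|\nabla u^\varepsilon|^2,
\end{equation*}
and taking $\eta(v)=v^2/2$ gives the uniform bound $\sqrt{\varepsilon}\,\nabla u^\varepsilon\in (L^2(\Omega_T))^d$.

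Writing $\chi^\varepsilon(x,t,v):=\chi(v,u^\varepsilon(x,t))$ with the standard sign-like kinetic function $\chi(v,u)=\mathbf{1}_{0<v<u}-\mathbf{1}_{u<v<0}$, this family of entropy identities is equivalent to a kinetic equation
\begin{equation*}
\partial_t\chi^\varepsilon+f'(v)\cdot\nabla_x\chi^\varepsilon=\partial_v m^\varepsilon+\partial_v\mu^\varepsilon,
\end{equation*}
where $m^\varepsilon\ge 0$ is a bounded nonnegative measure on $\Omega_T\times\R$ (the dissipation defect, of total mass controlled by $\varepsilon\!\int B(u^\varepsilon)|\nabla u^\varepsilon|^2$) and $\mu^\varepsilon$ is the quasilinear-divergence remainder which vanishes in a negative Sobolev norm. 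Hypothesis~D.d is precisely the $(p,p')$ non-degeneracy hypothesis required by the $L^p$ velocity averaging lemma of Lions--Perthame--Tadmor type, so for every $\psi\in L^{p'}(\R)$ with essentially compact support, the average $\int_{\R}\chi^\varepsilon\psi(v)\,dv$ is relatively compact in $L^p_{\mathrm{loc}}(\Omega_T)$. Choosing $\psi$ so as to recover $u^\varepsilon=\int_{\R}\chi^\varepsilon\,dv$ (e.g.\ $\psi=\mathbf{1}_{I}$) transfers this compactness to $u^\varepsilon$ itself, producing a subsequence converging a.e.\ to some $u\in L^\infty(\Omega_T;I)$.

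It remains to identify $u$ as the Otto entropy solution. In the interior, a.e.\ convergence together with the vanishing of the $\varepsilon$-dissipation and of $\mu^\varepsilon$ lets us pass to the limit in the entropy inequalities and obtain Kruzhkov-type inequalities for $u$. For the boundary in the Otto sense, I would test the entropy inequality for $u^\varepsilon$ with cutoffs admissible up to $\partial\Omega\times(0,T)$, use $u^\varepsilon|_{\partial\Omega}=0$ to eliminate the boundary trace of $u^\varepsilon$, and exploit the sign of $\eta'(u^\varepsilon-k)\,B(u^\varepsilon)\partial_\nu u^\varepsilon$ after a boundary-layer analysis anchored in the $\sqrt{\varepsilon}$ estimate to recover Otto's boundary entropy inequality for $u$; uniqueness of the Otto solution then promotes the subsequential convergence to convergence of the full family $(u^\varepsilon)$. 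The main obstacle is this last boundary passage: the quasilinear term $\varepsilon\nabla\cdot(B(u^\varepsilon)\nabla u^\varepsilon)$ lacks the clean estimates available when $B\equiv 1$, and since Hypothesis~D assumes only $u_0\in L^\infty$ there is no a priori BV control and hence no well-defined trace for $u$, so Otto's boundary formulation must be reached purely from the interior strong convergence together with a careful absorption of the boundary-layer contribution coming from the viscous flux.
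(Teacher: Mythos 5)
Your overall strategy coincides with the paper's: uniform $L^\infty$ bound by the maximum principle, the $\sqrt{\varepsilon}\,\nabla u^\varepsilon\in (L^2(\Omega_T))^d$ energy estimate, a kinetic formulation of \eqref{regularized.IBVP.a} whose right-hand side is a $\partial_c$-derivative of a bounded family of measures, velocity averaging under the non-degeneracy condition of Hypothesis D.d to extract an a.e.\ convergent subsequence, and then passage to the limit in the interior entropy inequalities. Up to that point your argument is essentially the one in Sections 4 and 5 of the paper (the paper keeps the whole viscous term as a single measure-valued source $m^\varepsilon$ rather than splitting it into a nonnegative dissipation part and a remainder, but this is cosmetic).

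The genuine gap is the boundary condition \eqref{definition.otto.eqn2}, which is exactly the step the paper singles out as the main difficulty. You describe a plan (``test with cutoffs up to the boundary, use $u^\varepsilon|_{\partial\Omega}=0$, exploit the sign of $\eta'\,B(u^\varepsilon)\partial_\nu u^\varepsilon$ after a boundary-layer analysis anchored in the $\sqrt{\varepsilon}$ estimate'') but do not carry it out, and as you yourself note, with only $u_0\in L^\infty$ there is no BV control and no trace of $u$, so it is not clear this boundary-layer absorption can be made to work: the $\sqrt{\varepsilon}$ bound only controls $\varepsilon B(u^\varepsilon)\nabla u^\varepsilon\cdot\nabla\phi$ globally and gives no sign or smallness information on the normal viscous flux at $\partial\Omega$. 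The paper avoids any boundary-layer analysis of the viscous term entirely. Instead it works with Otto's boundary entropy--entropy flux pairs $(H,Q)$ and proves (Theorem \ref{Boundaryinequality.F.Ottothm1} together with Lemma \ref{Boundaryinequality.F.Ottolem41}) the purely structural pointwise bound $|Q_j(z,k)\,\nu_j\,\beta|\le M\,H(z,k)\,\beta$, valid because $\partial_1 H(k,k)=0$ and $\eta'=\partial_1 H(\cdot,k)$ changes sign at $k$; this yields the inequality \eqref{F.Ottoentropy.N.eqn13} with the boundary term $M\,d\int_\Gamma H(u,k)\,\phi\,dr$ on the right, after which the abstract essential-limit machinery of \cite[p.115]{Necas} (Lemma \ref{Boundaryinequality.F.Ottolem3}, applied with $u^D=0$) delivers \eqref{definition.otto.eqn2} using only interior translates $u(r+s\nu(r))$, with no trace of $u$ required. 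To complete your proof you would need either to supply the boundary-layer argument you sketch (which seems out of reach under Hypothesis D) or to replace it with this Otto-type structural inequality. A minor further point: uniqueness of the Otto solution identifies all subsequential limits, but the paper only claims convergence of the a.e.\ limit of a subsequence, so your closing remark about upgrading to convergence of the full family, while plausible, is an additional claim not needed for the statement.
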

\begin{theorem}\label{paper2.compensatedcompactness.theorem2}
 {\rm Let $f,\,B,\,u_{0}$ satisfy Hypothesis E. Then the {\it a.e.} limit of the quasilinear viscous approximations 
 $\left(u^{\varepsilon}\right)$ is the unique entropy solution of IBVP \eqref{ivp.cl} in the sense of {\it Otto}\cite{MR1387428}.}
\end{theorem}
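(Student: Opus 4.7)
The strategy I plan to follow is Tartar's classical compensated-compactness programme for one-dimensional scalar conservation laws, tailored to the quasilinear viscosity $\varepsilon\,\partial_x(B(u^\varepsilon)\partial_x u^\varepsilon)$ and to Dirichlet data in Otto's sense. First I would establish uniform estimates. The maximum principle for the uniformly parabolic problem \eqref{regularized.IBVP} (using $B\ge r>0$ together with $u_{0\varepsilon}|_{\partial\Omega}=0$ and $\|u_{0\varepsilon}\|_\infty\le A$) gives $\|u^\varepsilon\|_{L^\infty(\Omega_T)}\le A$, so $(u^\varepsilon)$ generates, along a subsequence, a Young measure $\nu_{(x,t)}$ supported in $I=[-A,A]$. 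Testing \eqref{regularized.IBVP.a} against $u^\varepsilon$ and using the Dirichlet condition yields $\varepsilon\iint_{\Omega_T}B(u^\varepsilon)|\partial_x u^\varepsilon|^2\,dx\,dt\le C$, so $\sqrt{\varepsilon}\,\partial_x u^\varepsilon$ is bounded in $L^2(\Omega_T)$.

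Next, for any $C^2$ entropy $\eta$ with flux $q$ satisfying $q'=\eta'f'$, multiplying \eqref{regularized.IBVP.a} by $\eta'(u^\varepsilon)$ gives
\begin{equation*}
\partial_t\eta(u^\varepsilon)+\partial_x q(u^\varepsilon)=\varepsilon\,\partial_x\!\bigl(B(u^\varepsilon)\eta'(u^\varepsilon)\partial_x u^\varepsilon\bigr)-\varepsilon\,B(u^\varepsilon)\eta''(u^\varepsilon)|\partial_x u^\varepsilon|^2.
\end{equation*}
The first right-hand term is $\sqrt{\varepsilon}$ times $\partial_x$ of an $L^2$-bounded quantity, hence converges to $0$ in $H^{-1}_{\mathrm{loc}}$; for convex $\eta$, the second is a non-positive bounded Radon measure on $\Omega_T$. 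Murat's lemma then places $\partial_t\eta(u^\varepsilon)+\partial_x q(u^\varepsilon)$ in a compact subset of $H^{-1}_{\mathrm{loc}}(\Omega_T)$. Applying the div--curl lemma to $(\eta_1(u^\varepsilon),q_1(u^\varepsilon))$ and $(q_2(u^\varepsilon),\eta_2(u^\varepsilon))$ yields the Tartar commutation relation
\begin{equation*}
\langle\nu,\eta_1 q_2-\eta_2 q_1\rangle=\langle\nu,\eta_1\rangle\langle\nu,q_2\rangle-\langle\nu,\eta_2\rangle\langle\nu,q_1\rangle
\end{equation*}
for every pair of smooth entropy--flux pairs. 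Under Hypothesis E the set $\{f''\neq 0\}$ is dense in $\mathbb{R}$, so $f'$ is non-constant on every non-degenerate subinterval of $I$; this is exactly the hypothesis under which Tartar's reduction argument (localising entropies to open intervals where $f''$ does not vanish identically and exhausting $I$ by such intervals) forces $\nu_{(x,t)}=\delta_{u(x,t)}$ for a.e.\ $(x,t)$. Standard Young-measure theory then gives $u^\varepsilon\to u$ in $L^p_{\mathrm{loc}}(\Omega_T)$ for every $p<\infty$, hence a.e.

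Finally I would identify $u$ as the Otto entropy solution by passing to the limit in entropy inequalities. In the interior, taking $\eta$ to be a smooth approximation of $|\cdot-k|$ in the displayed identity, dropping the non-positive dissipation term, and using strong convergence produces Kruzhkov's inequality on $\Omega_T$ for every $k\in I$. For the Dirichlet trace, the uniform energy bound on $B(u^\varepsilon)\partial_x u^\varepsilon$ combined with $u^\varepsilon|_{\partial\Omega}=0$ allows one to test against functions not vanishing on $\partial\Omega$ and recover Otto's boundary entropy inequality via the semi-Kruzhkov pairs of \cite{MR1387428}. Uniqueness in Otto's class then identifies the full-sequence limit. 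The main obstacle I expect is the Young-measure reduction step under the density hypothesis rather than uniform genuine nonlinearity: one must justify that Tartar's argument closes when $f''$ may vanish on a closed nowhere-dense set. This will reduce to covering $I$ by open subintervals on which $f''$ is not identically zero and iterating the classical reduction, the density hypothesis precisely providing such a cover.
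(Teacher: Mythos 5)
Your proposal is correct and follows essentially the same route as the paper: uniform $L^{\infty}$ and $\sqrt{\varepsilon}$-gradient bounds, compact entropy production via Murat's lemma (Theorem \ref{chap9thm2}), the div--curl/Young-measure reduction under the density hypothesis on $f''$ (Theorem \ref{chap9thm3}), and then the boundary-entropy machinery of Otto \cite{MR1387428} and \cite{Necas} to verify the three conditions of Definition \ref{F.Otto}, which is exactly the proof of Theorem \ref{paper2.compensatedcompactness.theorem1} to which the paper reduces Theorem \ref{paper2.compensatedcompactness.theorem2}. The one step your sketch compresses is the boundary inequality \eqref{F.Ottoentropy.N.eqn13} for boundary entropy--entropy flux pairs, which the paper singles out as the main difficulty and establishes separately in Lemma \ref{Boundaryinequality.F.Ottolem41}; your appeal to the semi-Kruzhkov pairs is the right idea, but that estimate must be proved before Lemma \ref{Boundaryinequality.F.Ottolem3} can be invoked.
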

\begin{theorem}\label{paper2.compensatedcompactness.theorem3}
{\rm	Let $f,\,B,\,u_{0}$ satisfy Hypothesis F. Then the {\it a.e.} limit of the quasilinear viscous approximations 
	$\left(u^{\varepsilon}\right)$ is the unique entropy solution of IBVP \eqref{ivp.cl} in the sense of {\it Otto}\cite{MR1387428}.}
\end{theorem}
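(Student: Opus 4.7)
The plan is to combine the kinetic/entropy formulation of the quasilinear viscous problem \eqref{regularized.IBVP} with Tartar--Murat compensated compactness tailored to the two--dimensional setting, extract an \emph{a.e.}\ convergent subsequence of $(u^{\varepsilon})$, and identify the limit as Otto's entropy solution of \eqref{ivp.cl}; uniqueness of the latter then upgrades subsequential to full convergence.

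First I would set up the a priori estimates. The maximum principle for \eqref{regularized.IBVP} (using $B\geq r>0$ and the smoothness of $u_{0\varepsilon}$) gives $\|u^{\varepsilon}\|_{L^{\infty}(\Omega_{T})}\leq A$ with $A$ from Lemma~\ref{regularized.lem1}. Testing \eqref{regularized.IBVP.a} against $u^{\varepsilon}$ yields the energy estimate $\varepsilon\int_{\Omega_{T}}B(u^{\varepsilon})|\nabla u^{\varepsilon}|^{2}\,dx\,dt\leq C$. The bound $\|\nabla u_{0\varepsilon}\|_{L^{1}(\Omega)}\leq A$ from Lemma~\ref{regularized.lem1} propagates, via a translation argument (writing the diffusive flux as $\nabla\Phi(u^{\varepsilon})$ with $\Phi^{\prime}=B$), to a uniform $L^{\infty}(0,T;BV(\Omega))$ bound on $u^{\varepsilon}$. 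Next, for any convex $\eta\in C^{2}(\R)$ with flux $q^{\prime}=\eta^{\prime}f^{\prime}$, one has
\begin{equation*}
\partial_{t}\eta(u^{\varepsilon})+\nabla\cdot q(u^{\varepsilon}) = \varepsilon\,\nabla\cdot\bigl(B(u^{\varepsilon})\eta^{\prime}(u^{\varepsilon})\nabla u^{\varepsilon}\bigr) - \varepsilon\,B(u^{\varepsilon})\,\eta^{\prime\prime}(u^{\varepsilon})\,|\nabla u^{\varepsilon}|^{2}.
\end{equation*}
The second term is uniformly bounded in $\mathcal{M}(\Omega_{T})$ by the energy estimate, while the first can be written as $\sqrt{\varepsilon}\,\nabla\cdot g^{\varepsilon}$ with $g^{\varepsilon}=\sqrt{\varepsilon}B(u^{\varepsilon})\eta^{\prime}(u^{\varepsilon})\nabla u^{\varepsilon}$ bounded in $\bigl(L^{2}(\Omega_{T})\bigr)^{2}$, hence tends to zero in $H^{-1}_{\mathrm{loc}}(\Omega_{T})$. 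Murat's lemma then furnishes compactness of $\partial_{t}\eta(u^{\varepsilon})+\nabla\cdot q(u^{\varepsilon})$ in $H^{-1}_{\mathrm{loc}}(\Omega_{T})$.

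Then I would apply the div--curl lemma to two independent entropy-flux pairs $(\eta_{i},q_{i})$, $i=1,2$: the Young measure $\nu_{x,t}$ associated to the subsequence satisfies Tartar's commutation identity $\langle\nu,\eta_{1}q_{2}-\eta_{2}q_{1}\rangle = \langle\nu,\eta_{1}\rangle\langle\nu,q_{2}\rangle - \langle\nu,\eta_{2}\rangle\langle\nu,q_{1}\rangle$ for \emph{a.e.}\ $(x,t)$. The linear independence of $f_{1}^{\prime},f_{2}^{\prime}$ from Hypothesis F.3 is precisely the condition under which this family of identities (as $\eta_{i}$ range over convex $C^{2}$-entropies supported in $I$) forces $\nu_{x,t}$ to reduce to a Dirac mass; this is Tartar's classical reduction in two space dimensions. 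Consequently $u^{\varepsilon}\to u$ \emph{a.e.}\ in $\Omega_{T}$ along a subsequence.

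Finally I would identify $u$ with Otto's entropy solution. Interior entropy inequalities pass to the limit by lower semicontinuity of the quadratic dissipation. For the boundary condition in Otto's sense, I would use the homogeneous trace $u^{\varepsilon}|_{\partial\Omega}=0$, the uniform $L^{\infty}(0,T;BV(\Omega))$ bound, and the strict $BV$--convergence $u_{0\varepsilon}\to u_{0}$ from Lemma~\ref{regularized.lem1} to pass to the limit in the boundary layer along the lines of Otto~\cite{MR1387428}, verifying his boundary entropy condition with datum $0$. Uniqueness of Otto's entropy solution then forces the whole sequence $(u^{\varepsilon})$ to converge to $u$. The main obstacle will be the Young-measure reduction step in this quasilinear setting: although Hypothesis F.3 is exactly the Tartar condition in $d=2$, verifying the commutation identity over the entire interval $I$ with entropies compatible with the $u$-dependent viscosity $B(u^{\varepsilon})\nabla u^{\varepsilon}$ requires careful localization of the dissipation measure; a secondary difficulty will be the Otto boundary analysis, where one must show that the boundary-layer contribution from the $u$-dependent diffusion is controlled by the uniform $BV$-in-space bound.
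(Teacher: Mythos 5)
Your overall architecture (a priori bounds, compact entropy production, Young--measure reduction, identification of the limit as Otto's entropy solution, uniqueness to upgrade subsequential to full convergence) matches the paper's, but the central reduction step rests on a mechanism that is not available in two space dimensions, and this is a genuine gap. The commutation identity $\langle\nu,\eta_{1}q_{2}-\eta_{2}q_{1}\rangle=\langle\nu,\eta_{1}\rangle\langle\nu,q_{2}\rangle-\langle\nu,\eta_{2}\rangle\langle\nu,q_{1}\rangle$ is Tartar's \emph{one--space--dimensional} identity: it comes from the div--curl lemma applied in the two--dimensional $(t,x)$ plane to $G=(\eta_{1},q_{1})$ and $H=(q_{2},-\eta_{2})$, for which the curl of $H$ is again an entropy production. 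When $d=2$ the entropy fluxes $q_{i}$ are $\R^{2}$--valued, the ambient space is $(t,x_{1},x_{2})\in\R^{3}$, and there is no second vector field built from an entropy pair whose full curl is controlled by the entropy productions; the expression $\eta_{1}q_{2}-\eta_{2}q_{1}$ is not even a scalar. There is no ``Tartar's classical reduction in two space dimensions'' of the form you invoke --- this obstruction is exactly why the paper follows Tadmor--Rascle--Bagnerini in Theorem \ref{compactness12.thm2}: multiply the equation by $f_{1}^{\prime},f_{2}^{\prime}$ to produce the special quadratic entropies $F_{11},F_{12},F_{22}$ of \eqref{compactness12.eqn21}, show that the two spatial divergence combinations \emph{and} the time derivatives $\partial_{t}F_{11},\partial_{t}F_{22}$ in \eqref{comp.Theorem.eqn2} are $H^{-1}$--compact, and apply the quadratic theorem of compensated compactness to obtain \eqref{comp.Theorem.eqn4}, after which the linear independence of $f_{1}^{\prime},f_{2}^{\prime}$ forces $\nu_{x,t}$ to be a Dirac mass. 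The time compactness is where Hypothesis F.4 ($u_{0}\in BV_{0}(\Omega)$) actually enters, through $\|\partial_{t}u^{\varepsilon}\|_{L^{1}(\Omega_{T})}\leq C$ and Theorem \ref{timedervative.thm1}; your proposal never secures an analogue of this, and your claimed uniform $L^{\infty}(0,T;BV(\Omega))$ bound ``by a translation argument'' is itself doubtful on a bounded domain with Dirichlet data (and, if it held, would render the whole compensated--compactness apparatus unnecessary).

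A smaller point: for the Otto boundary condition the paper does not argue through $BV$ traces of $u^{\varepsilon}$; it derives the inequality \eqref{F.Ottoentropy.N.eqn13} for boundary entropy--entropy flux pairs via Lemma \ref{Boundaryinequality.F.Ottolem41} (with the constant $Md$) and then feeds it into Lemmas \ref{Boundaryinequality.F.Ottolem3} and \ref{Boundaryinequality.F.Ottolem4}. Your sketch of this step is compatible in spirit but does not identify that inequality, which the paper singles out as the main difficulty of the entropy--solution part.
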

In order to extract the {\it a.e.} convergent subsequence of the quasilinear viscous approximations, we use the method of Compensated Compactness for space dimension $d=1$ and $d=2$ and the Kinetic formulation for the multidimensional case. But compactness arguments based on a priori BV bound is the most effective tool for studing the conservation laws. In general, it is difficult to get a priori BV estimates for sequence of approximate solutions $\left(u^{\varepsilon}\right)$. We have establied the BV-estimate of quasiliear viscous approximations $\left(u^{\varepsilon}\right)$ in \cite{Ramesh} for BV initial data. In this work, we use compensated compactness and kinetic formulation as we mainly focus on $L^{\infty}\left(\Omega\right)$ initial data.\\
We now briefly review the literatures on the method of Compensated Compactness in the context of the the problem \eqref{regularized.IBVP.a}. 
 Let $\Omega=\mathbb{R},\, T=\infty$. For $B(.)\equiv 1$, Tartar  \cite{Dacorogna},\cite{MR584398} established the almost everywhere convergence of a subsequence of the approximate solutions $\left(u^{\varepsilon}\right)$ using theory of compensated compactness in one space dimension. In \cite{Markati2}, Markati considers the viscosity term of the form $\varepsilon\left(B(u^{\varepsilon})\,u_{x}\right)_{x}$ and prove that the {\it a.e.} limit of the viscosity approximations is the unique Kruzhkov's entropy solution to the corresponding scalar conservation laws. In \cite{Markati1}, the Markati and Natalini study viscosity problem with degenarate gradient dependent viscosity of the form $\varepsilon\left(B(u_{x})\right)_{x}$. They also prove the convergence of the viscosity approximations to the unique Kruzhkov's entropy solution of  scalar conservation laws using Compensated Compactness. We use the similar analysis from Tartar \cite{Dacorogna},\cite{MR584398} to conclude the existence of {\it a.e.} convergent subsequence in the bounded domain $\Omega$ of $\mathbb{R}$. The hard to get BV a priori estimates difficulty is replaced by $L^{2}-$type compact entropy production by Tartar in the context of conservation laws.  An existence result is proved in \cite[p.66]{Dacorogna} for $u_{0}\in W^{1,\infty}(\Omega)$. In one space dimension, we use initial data $u_{0}$ in $L^{\infty}(\Omega)$ for the proof of existence of almost everywhere convergent subsequence. We use a similar analysis from \cite{Tadmor} to conclude the extraction of {\it a.e.} convergent subsequence for space dimension $d=2$. These extraction of almost everywhere convergent subsequence is independent of spatial a priori BV bounds of $\left(u^{\varepsilon}\right)$. Since we work with bounded domain $\Omega$ of $\mathbb{R}^{d}$, we prove that the {\it a.e.} limit is the unique entropy solution of corresponding scalar conservation laws in the sense of Otto \cite{MR1387428}. \\ 
We briefly review the literatures on Kinetic formulations and Kinetic techniques of scalar conservation laws. Let $\Omega=\mathbb{R}^{d}$ and $T=\infty$. In \cite{B.Parthame}, Perthame and Tadmor consider a Boltzmann-like kinteic equation and show that the local density of a kinetic particle admits a continuum limit as it converges strongly with $\varepsilon\to 0$ to the unique Kruzhkov entropy solution of the corresponding scalar conservation laws using BV- compactness arguement in several space dimension and using compensated compactness arguement in one space dimension. In \cite{Lions}, Tadmor {\it et al.} introduced a Kinetic formulation for multidimensional scalar conservation laws and proved that sequence of solutions of the cauchy problem to scalar conservations laws and sequence of approximations to scalar conservations laws are $L^{1}\left(\mathbb{R}^{d}\times (0,\infty)\right)-$ compact using Velocity averaging lemmata. Let $\Omega$ bounded domain. In \cite{CImbert}, Imbert and Vovelle prove a comparison principle between any weak entropy subsolution and any weak entropy supersolution of the initial-boundary value problem using Kinetic formulation and Kinetic techiniques . As an application of comaprism principle, they study that the Bhatnagar–Gross–Krook-like kinetic model that approximates the scalar conservation law. For $B(.)\equiv 1$, In \cite{JDroniou},  Droniou {\it et al.} prove that the rate of convergence of the viscous approximation to the weak entropy solution is of order $\varepsilon^{\frac{1}{3}}$ , where $\varepsilon$ is the size of the artificial viscosity. We refer the reader to \cite{ZWang} for a rate of convergence result using kinetic formulation and kinetic technique.\\
\vspace{0.1cm}\\
In this article, we establish the existence of an almost everywhere convergent subsequence of the approximate solutions $\left(u^{\varepsilon}\right)$ using Kinetic Formulation and kinetic technique introduced in \cite{Lions} in any space dimension. For  $\Omega=\mathbb{R}^{d},\,T=\infty\,\,\mbox{and}\,B(.)\equiv 1$, Tadmor {\it et al.} \cite{Tadmor} apply Velocity Averaging lemma to the kinetic formulation of \eqref{regularized.IBVP.a} as a multidimensional spatial kinetic formulation. As a consequence, they use that the time derivative of the pseudo maxwellian is bounded in the space of measure $\mathcal{M}\left(\mathbb{R}^{d}\times\mathbb{R}\times\left(0,\infty\right)\right)$,\,{\it i.e.}, $\left(\frac{\partial\chi^{\varepsilon}}{\partial t}\right)$ is bounded in $\mathcal{M}\left(\mathbb{R}^{d}\times\mathbb{R}\times\left(0,\infty\right)\right)$. This is possible if the initial data are BV functions as shown in \cite{Tadmor}. Therefore the extraction of almost everywhere convergent subsequence of $\left(u^{\varepsilon}\right)$ is proved in \cite{Tadmor} for BV initial data. But we use Velocity Averaging lemma directly from \cite{Lions} to kinetic formulation of \eqref{regularized.IBVP.a} as space-time separate version, we are able to establish the existence of {\it a.e.} convergent subsequence of $\left(u^{\varepsilon}\right)$ for $L^{\infty}\left(\Omega\right)$ initial data and $B$ as mentioned in Hypothesis D. This proof relies on Velocity Averaging lemma. These results are proved in Section \ref{kineticformulation.generalizedviscosityproblem}. We use Velocity Averaging lemma on the domain $\mathbb{R}^{d}\times\mathbb{R}\times (0,\infty)$. But the sequence of measures 
$$m^{\varepsilon}(x,c,t):= \varepsilon\,\eta^{\prime}\left(u^{\varepsilon};c\right)\displaystyle\sum_{j=1}^{d}\frac{\partial}{\partial x_{j}}\left(B(u^{\varepsilon})\,\frac{\partial u^{\varepsilon}}{\partial x_{j}}\right)$$
is defined on $\Omega\times\mathbb{R}\times (0,T)$. Therefore firstly, we prove that for every $M>0$,  $\left(m^{\varepsilon}\right)$ is bounded in $\mathcal{M}\left(\Omega\times\left(-M,M\right)\times(0,T)\right)$. Secondly, applying Reisz-reprsentation theorem of dual spaces, we extend the sequence of measures $\left(m^{\varepsilon}\right)$ to the space $\mathbb{R}^{d}\times\mathbb{R}\times (0,\infty)$ which preserves the total variations. As a result, we can use a standard result from \cite{Lions} to get that the derivative of this extended sequence of measures can be expressed as the composition of inverses of Riesz potentials. This is one of the requirements to apply Velocity Averaging lemma. Therefore it is a combination of arguements of the domains $\Omega\times\mathbb{R}\times (0,T)$ and $\mathbb{R}^{d}\times\mathbb{R}\times (0,\infty)$. These arguements are connected through Riesz represtation theorem. \\  In order to apply Velocity Averaging lemma, we need to prove that the sequence of measures $\left(m^{\varepsilon}\right)$ is bounded in $\mathcal{M}\left(\mathbb{R}^{d}\times\mathbb{R}\times (0,\infty)\right)$. We observe from \cite{Lions} that the sequence of measures $\left(m^{\varepsilon}\right)$ can be interpreted as the following Kruzhkov entropy dissipation
$$m^{\varepsilon}(x,c,t)=\frac{\partial}{\partial t}\left(|u^{\varepsilon}-c|\right) + \displaystyle\sum_{j=1}^{d}\,\frac{\partial}{\partial x_{j}}\left(sg(u^{\varepsilon}-c)\,\left(f_{j}(u^{\varepsilon})-f_{j}(c)\right)\right)$$
in the sense of distribution $\mathcal{D}^{\prime}\left(\Omega\times (0,T)\right)$ (See Section \ref{kineticformulation.generalizedviscosityproblem}). We notice in the entropy dissipation form that the second order nonlinear term on the RHS of \eqref{regularized.IBVP.a} can be expressed as a sum of first order derivatives of ``Kruzhkov entropy-entropy fluxes" in the sense of distributions. Then applying the boundedness of the sequence $\left(\displaystyle\sum_{j=1}^{d}\left(\sqrt{\varepsilon}\,\left\|\frac{\partial u^{\varepsilon}}{\partial x_{j}}\right\|_{L^{2}\left(\Omega_{T}\right)}\right)\right)$ and Step 2 of Lemma \ref{MeasureBounded.1} from Subsection \ref{Boundedness.sequenceofmeasures}, we can show that $\left(m^{\varepsilon}\right)$ are bounded sequence of measures. Instead of using entropy dissipation form, we work directly with the sequence of measures $m^{\varepsilon}(x,c,t):= \varepsilon\,\eta^{\prime}\left(u^{\varepsilon};c\right)\displaystyle\sum_{j=1}^{d}\frac{\partial}{\partial x_{j}}\left(B(u^{\varepsilon})\,\frac{\partial u^{\varepsilon}}{\partial x_{j}}\right)$. We give a different proof of boundedness of sequence of measures $\left(m^{\varepsilon}\right)$ by using $\left(sg(x)\,tanh\,\left(n\,|x|\right)\right)$ as the sequence of approximations for the sign function $sg(x)$. This is motivated by the BV-estimate result from our previous work \cite{Ramesh} as in the case of BV-estimate, we use $\left(tanh\,nx\right)$ as the approximations of $\left(\mbox{sg}\left(x\right)\right)$. Then we express the second order nonlinear term present in $\left(m^{\varepsilon}\right)$ as a sum of first order derivatives in the sense of distributions by clever computations and using the property
$$\displaystyle\lim_{n\to\infty}\,tanh^{\prime}\,\left(n\,|u^{\varepsilon}-c|\right)=\displaystyle\lim_{n\to\infty}\,tanh^{\prime}\,\left(n\,\left|\frac{\partial u^{\varepsilon}}{\partial x_{j}}\right|\right).$$  
This is the idea of the entropy dissipation property. This result is proved in Lemma \ref{MeasureBounded.1}. \\ 
\vspace{0.1cm}\\
In Section \ref{Section.F.Otto.entropysolution}, we derive that the {\it a.e.} limit of a subsequence of approximate solutions $\left(u^{\varepsilon}\right)$ is the unique entropy solution in the sense of Otto \cite{MR1387428}. For $B(.)\equiv 1$, in $L^{\infty}-$ setting, the existence of entropy solution is proved using vanishing viscosity method. In this proof, an inequality \cite[p.113]{Necas} satisfied by ``boundary entropy-entropy flux" pairs is proved using an appropriate multiplier (see \cite[p.129]{Necas} for the multiplier). This inequality is equivalent to the definition of Otto's entropy solution \cite[p.113]{Necas}. As a consequence, existence of entropy solution is proved in the sense of Otto. But we prove that the {\it a.e.} limit of sequence of quasiliear viscous approximations is the Otto's entropy solution for $B$ as mentioned in Hypothesis D in $L^{\infty}-$ setting by proving Lemma \ref{Boundaryinequality.F.Ottolem41}. In proving Lemma \ref{Boundaryinequality.F.Ottolem41}, we have used the properties of ``boundary entropy-entropy flux" pair and introduced clever computations. To prove Lemma \ref{Boundaryinequality.F.Ottolem41}, it is equivalent to prove an inequality which is equivalent to definition of Otto's entropy solution of \cite[p.113]{Necas} satisfied by  ``boundary entropy-entropy flux" pairs mentioned in Step 3 in the proof of Theorem \ref{paper2.compensatedcompactness.theorem1}. The only difference in our inequality with the inequality of \cite[p.113]{Necas} is that the Lipschitz constant $M$ is replaced by another Lipschitz constant $M\,d$ in our case.
 Therefore, we finish establishing that the {\it a.e.} limit of quasilinear viscous approximations is the unique entropy solution in the sense of  Otto whenever the initial data in $L^{\infty}\left(\Omega\right)$ and for $B$ as mentioned in Hypothesis D.\\ 
\vspace{0.1cm}\\
The plan for the paper are the following. In Section 2, we state results about existence, uniqueness of solutions of \eqref{regularized.IBVP}, estimates results of solutions and its dervatives. In Section 3, we extract {\it a.e.} convergent subsequence of $\left(u^{\varepsilon}\right)$ using Compensated Compactness for space dimension $d=1$ and $d=2$. In Section 4, we establish the existence of an {\it a.e.} convergent subsequence of $\left(u^{\varepsilon}\right)$ using Kinetic Formulation in any space dimension. In Section 5, we deriave that the {\it a.e.} limit of $\left(u^{\varepsilon}\right)$ is the unique entropy solution in the sense of Otto \cite{MR1387428}.
\section{Existence, uniqueness, maximum principle and derivative estimates}
\subsection{Existence and Uniqueness of Solutions}
We begin this section by constructing sequences of approximations $\left(u_{0\varepsilon}\right)$ for the initial data $u_{0}$. The sequence $\left(u_{0\varepsilon}\right)$ mentioned in \textbf{Hypothesis D} are constructed in the following result in view of the discussions from \cite[p.31-p.35]{MR2309679}. We prove the result for any space dimension $d\in\mathbb{N}$. 	
\begin{lemma}\label{hypothesisDinitialdatalem}
	Let $u_{0}\in L^{\infty}(\Omega)$ and $1\leq p<\infty$. Then there exists a sequence $\left(u_{0\varepsilon}\right)$ in $\mathcal{D}(\Omega)$ and $A> 0$ such that $\|u_{0\varepsilon}\|\leq A$ and $u_{0\varepsilon}\to u_{0}$ in $L^{p}(\Omega)$ as $\varepsilon\to 0$. 	
\end{lemma}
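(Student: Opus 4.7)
The plan is the classical ``extend, mollify, and cut off'' construction, with the cutoff scale chosen in terms of the mollification parameter so that the resulting function lies in $\mathcal{D}(\Omega)$ while preserving both the $L^\infty$ bound and the $L^p$ convergence.

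First, I would extend $u_{0}$ by zero to obtain $\widetilde{u}_{0}\in L^{\infty}(\mathbb{R}^{d})\cap L^{p}(\mathbb{R}^{d})$, with $\|\widetilde{u}_{0}\|_{L^{\infty}(\mathbb{R}^{d})}=\|u_{0}\|_{L^{\infty}(\Omega)}$ (this is where boundedness of $\Omega$ is used to get $L^{p}$). Fix a standard nonnegative mollifier $\rho\in\mathcal{D}(\mathbb{R}^{d})$, $\mbox{supp}\,\rho\subset B(0,1)$, $\int\rho=1$, and set $\rho_{\varepsilon}(x)=\varepsilon^{-d}\rho(x/\varepsilon)$. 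For a parameter $\eta=\eta(\varepsilon)$ to be chosen, let $\Omega_{\eta}:=\{x\in\Omega:\mbox{dist}(x,\partial\Omega)>\eta\}$ and pick $\chi_{\eta}\in\mathcal{D}(\Omega)$ with $0\le\chi_{\eta}\le 1$, $\chi_{\eta}\equiv 1$ on $\Omega_{2\eta}$, and $\mbox{supp}\,\chi_{\eta}\subset\Omega_{\eta}$. Now define
\[
u_{0\varepsilon}\;:=\;\chi_{\eta(\varepsilon)}\,\bigl(\widetilde{u}_{0}*\rho_{\varepsilon}\bigr),
\]
choosing $\eta(\varepsilon)\downarrow 0$ with $\varepsilon<\eta(\varepsilon)$ (e.g.\ $\eta(\varepsilon)=\sqrt{\varepsilon}$) so that $\mbox{supp}\,u_{0\varepsilon}\subset\Omega_{\eta(\varepsilon)}\Subset\Omega$. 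By construction $u_{0\varepsilon}\in\mathcal{D}(\Omega)$.

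For the $L^{\infty}$ bound I would use Young's inequality: $\|\widetilde{u}_{0}*\rho_{\varepsilon}\|_{L^{\infty}(\mathbb{R}^{d})}\le\|\widetilde{u}_{0}\|_{L^{\infty}}\|\rho_{\varepsilon}\|_{L^{1}}=\|u_{0}\|_{L^{\infty}(\Omega)}$, and since $0\le\chi_{\eta}\le 1$ we conclude $\|u_{0\varepsilon}\|_{L^{\infty}(\Omega)}\le\|u_{0}\|_{L^{\infty}(\Omega)}=:A$. For the $L^{p}$ convergence, split
\[
\|u_{0\varepsilon}-u_{0}\|_{L^{p}(\Omega)}\;\le\;\bigl\|\chi_{\eta(\varepsilon)}\bigl(\widetilde{u}_{0}*\rho_{\varepsilon}-\widetilde{u}_{0}\bigr)\bigr\|_{L^{p}(\Omega)}+\bigl\|(\chi_{\eta(\varepsilon)}-1)\,\widetilde{u}_{0}\bigr\|_{L^{p}(\Omega)}.
\]
The first term is bounded by $\|\widetilde{u}_{0}*\rho_{\varepsilon}-\widetilde{u}_{0}\|_{L^{p}(\mathbb{R}^{d})}$, which tends to zero by the standard continuity of translation in $L^{p}$ together with $\widetilde{u}_{0}\in L^{p}(\mathbb{R}^{d})$ (valid since $p<\infty$). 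The second term goes to zero by dominated convergence: $\chi_{\eta(\varepsilon)}\to 1$ pointwise on $\Omega$ as $\varepsilon\to 0$, and the integrand is dominated by $|\widetilde{u}_{0}|^{p}\in L^{1}(\Omega)$.

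The only subtlety, and the point where some care is needed, is the interaction between the mollification and the cutoff: one must pick $\eta(\varepsilon)$ large enough to guarantee $\mbox{supp}\,u_{0\varepsilon}\subset\Omega$ (so $\eta(\varepsilon)>\varepsilon$) yet small enough to force $\chi_{\eta(\varepsilon)}\to 1$ on $\Omega$ a.e. The choice $\eta(\varepsilon)=\sqrt{\varepsilon}$ achieves both. The hypothesis $p<\infty$ is essential, as the $L^{p}$ continuity of translation fails for $p=\infty$; the statement of the lemma correctly excludes that case.
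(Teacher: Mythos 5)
Your proof is correct, but it takes a genuinely different route from the paper. The paper follows the construction of Wu--Yin--Wang: it covers $\partial\Omega$ by charts, flattens the boundary via diffeomorphisms $\Psi_i:P\to U_i$, extends the localized function by zero to the cube $P$, and mollifies with a kernel \emph{shifted} by $2\varepsilon$ in the normal direction so that the mollification is automatically compactly supported inside $P^{+}$; the pieces are then reassembled with a partition of unity, with a separate interior piece mollified in the usual way. You instead extend $u_0$ by zero to all of $\mathbb{R}^{d}$, mollify globally, and restore compact support in $\Omega$ with an interior cutoff $\chi_{\eta(\varepsilon)}$ at a scale $\eta(\varepsilon)\to 0$. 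For the statement actually being proved -- a uniform $L^{\infty}$ bound plus $L^{p}$ convergence, $p<\infty$ -- your argument is complete and considerably more elementary: Young's inequality gives the sup bound, and the two error terms are handled by continuity of translation and dominated convergence, exactly as you say. The shifted-mollifier/boundary-chart machinery of the paper is the natural tool when one also needs control of derivatives or traces near $\partial\Omega$ (as in the $BV$ setting of Lemma~\ref{regularized.lem1}), but it buys nothing extra here. Two small remarks: the containment $\mbox{supp}\,u_{0\varepsilon}\subset\Omega_{\eta(\varepsilon)}$ follows from the cutoff alone, so the constraint $\varepsilon<\eta(\varepsilon)$ is not actually needed (any $\eta(\varepsilon)\downarrow 0$ works); and to literally produce a constant $A>0$ when $u_0\equiv 0$ you should take $A=\max\bigl(\|u_{0}\|_{L^{\infty}(\Omega)},1\bigr)$.
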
	
\begin{proof}
	We prove Lemma \ref{hypothesisDinitialdatalem} in two steps. In Step 1, we show the construction of $\left(u_{0\varepsilon}\right)$	and prove its convergence to $u_{0}$ in $L^{p}(\Omega)$. In Step 2, we show the existence of a constant $A> 0$ such that $\|u_{0\varepsilon}\|\leq A$.
	\vspace{0.1cm}\\
	\textbf{Step 1:} Denote
	$$P:=\left\{y\in\mathbb{R}^{d}:\,\,\left|y_{i}\right|< 1,\,i=1,2,\cdots,d\right\},\, P^{+}:=\left\{y\in P:\,\,y_{d}> 0\right\},\,\,
	\Gamma:=\left\{y\in P:\,\,y_{d}=0\right\}.$$
	Since $\partial\Omega$ is smooth, for a given point $x_{0}\in\partial\Omega$, there exists a neighbourhood 
	$U_{0}$ of $x_{0}$ and a smooth invertible mapping $\Psi_{0}: P\to U_{0}$ such that 
	$$\Psi_{0}\left(P^{+}\right)= U_{0}\cap\Omega,\,\, \Psi_{0}\left(\Gamma\right)= U_{0}\cap\partial\Omega.$$
	Let $\eta_{0}\in\mathcal{D}(U_{0})$. We consider $\eta_{0} u_{0}$ instead of $u_{0}$ in the neighbourhood $U_{0}$ of $x_{0}$ . 
	Define $v:P^{+}\to\mathbb{R}$ by
	$$v(y):=\left(\eta u_{0}\right)\left(\Psi(y)\right).$$ 
	Since $\eta_{0}\in\mathcal{D}(U_{0})$, therefore $v\equiv 0$ in a neighbourhood of the upper boundary and the lateral boundary of $P^{+}$. 
	Let $\tilde{v}$ be the extension 
	of $v$ to $P$ by setting $\tilde{v}=0$ in $P\setminus P^{+}$. Let $\tilde{\rho}_{\varepsilon}:\mathbb{R}\to\mathbb{R}$
	be the standard sequence of mollifiers.  Define $J_{\varepsilon}\tilde{v}(y): P^{+}\to\mathbb{R}$ by
	\begin{eqnarray}\label{initialdata.h1.eqn3AA}
	J_{\varepsilon}\tilde{v}(y) :=\int_{P}\tilde{\rho}_{\varepsilon}(y_{1}-z_{1})\,\tilde{\rho}_{\varepsilon}(y_{2}-z_{2})\,\cdots
	\tilde{\rho}_{\varepsilon}(y_{d-1}-z_{d-1})\tilde{\rho}_{\varepsilon}(y_{d}-z_{d}-2\varepsilon)\,\tilde{v}(z)\,dz.\nonumber\\
	{}
	\end{eqnarray}
	For clarity, we only show that 
	$\left(J_{\varepsilon}\tilde{v}\right)$ has compact support in $P^{+}$. Since $\tilde{v}$ is zero in a neighbourhood of the upper 
	boundary and the lateral boundary of $P^{+}$, we only show that $J_{\varepsilon}\tilde{v}$ is zero in $\left\{y\in P^{+};\,\, 0<y_{d}<\varepsilon\right\}$. We know that 
	$\rho_{\varepsilon}(y_{d}-z_{d}-2\varepsilon)=0$ whenever $\left|y_{d}-z_{d}-2\varepsilon\right|\geq\varepsilon$. Let us 
	compute
	\begin{eqnarray}\label{initialdata.h1.eqn3}
	\left|y_{d}-z_{d}-2\varepsilon\right|&=& \left|z_{d}+\varepsilon +\left(\varepsilon-y_{d}\right)\right|,\nonumber\\
	&\geq& z_{d} +\varepsilon >\varepsilon.
	\end{eqnarray}
	Therefore $\left(J_{\varepsilon}\tilde{v}\right)$ has compact support in $P^{+}$. As a result, the function 
	$\left(J_{\varepsilon}\tilde{v}(\Psi^{-1}_{0}(x))\right)$ belongs to $C^{1}_{0}(\Omega\cap U_{x_{0}})$. Denote $h_{\epsilon}:\mathbb{R}^{d}\to\mathbb{R}$ by 
	$$h_{\epsilon}(p)=\tilde{\rho}_{\varepsilon}(p_{1})\tilde{\rho}_{\varepsilon}(p_{2})\cdots\tilde{\rho}_{\varepsilon}(p_{d-1})\tilde{\rho}_{\varepsilon}(p_{d}-2\varepsilon).$$
	For $y\in P$, we have $J_{\varepsilon}\tilde{v}(y)=\left(h_{\epsilon}\ast\tilde{v}\right)(y)$. Therefore for $1\leq p<\infty$, we obtain 
	\\
	$$J_{\varepsilon}\tilde{v}\to \eta u_{0}\,\,\,\mbox{in}\,\,\, L^{p}(P^{+})\,\,\mbox{as}\,\,\varepsilon\to 0.$$	
	Therefore we have 
	$$J_{\varepsilon}\tilde{v}(\Psi^{-1}_{0}(x))\to \eta u_{0}\,\,\,\mbox{in}\,\,\, L^{P}(\Omega)\,\,\mbox{as}\,\,\varepsilon\to 0.$$
	Since $\partial\Omega$ is compact, there exists $x_{1}, x_{2},\cdots,x_{N}\in\partial\Omega$ and $U_{1},U_{2},\cdots,U_{N}$
	such that $\partial\Omega\subset\displaystyle\cup_{i=1}^{N}U_{i}$. Choose $U_{N+1}\subset\subset\Omega$ such that 
	$$\overline{\Omega}\subset\displaystyle\cup_{i=1}^{N+1}U_{i}.$$
	For $i=1,2,\cdots,N,N+1$, let $\eta_{i}$ be a partition of unity associated to $U_{i}$. For $i=1,2,\cdots,N$, let 
	$\left(u^{\varepsilon}_{0i}\right)$ be the sequences corresponding to $U_{i},\,\eta_{i}$ obtained as above manner, {\it i.e.},
	$u_{0i}^{\varepsilon}=J_{\varepsilon}\tilde{v_{i}}(\Psi^{-1}_{i}(x))$. Let $\rho_{\varepsilon}:\mathbb{R}^{d}\to\mathbb{R}$ be 
	sequence of mollifiers. Then $u_{0N+1}^{\varepsilon}:=\left(\eta_{N+1}u_{0}\right)\ast\rho_{\varepsilon}\to \eta_{N+1}u_{0}$ on 
	$\overline{U_{N+1}}$ in $L^{p}(\Omega)$ as $\varepsilon\to 0$.\\ Denote
	$$u_{0\varepsilon}(x):=\displaystyle\sum_{i=1}^{N+1}u_{0i}^{\varepsilon}(x).$$
	It is clear that $u_{0\varepsilon}\to u_{0}$ in $L^{p}(\Omega)$ as $\varepsilon\to 0$.\\
	\vspace{0.1cm}\\
	\textbf{Step 2: } Applying change of variable $\frac{y-z}{\varepsilon}=p$ in 
	\eqref{initialdata.h1.eqn3AA}, we get
	\begin{eqnarray}\label{initialdata.h1.eqn4}
	J_{\varepsilon}^{-}\tilde{v}(y) &:=&\int_{y_{1}-\varepsilon}^{y_{1}+\varepsilon}\int_{y_{2}-\varepsilon}^{y_{2}+\varepsilon}\cdots
	\int_{y_{d}-\varepsilon}^{y_{d}+\varepsilon}\frac{1}{\varepsilon^{d}}\tilde{\rho}(\frac{y_{1}-z_{1}}{\varepsilon})\cdots
	\tilde{\rho}(\frac{y_{d-1}-z_{d-1}}{\varepsilon})\tilde{\rho}(\frac{y_{d}-z_{d}-2\varepsilon}{\varepsilon})\,\tilde{v}(z)\,dz,\nonumber\\
	&=& \int_{-1}^{1}\int_{-1}^{1}\cdots\int_{-1}^{1}\int_{3}^{1}\tilde{\rho}(p_{1})\tilde{\rho}(p_{2})\cdots\tilde{\rho}(p_{d-1})\tilde{\rho}(p_{d}-2)\,\tilde{v}(x-\varepsilon p)\,(-1)^{d}\,dp
	\end{eqnarray}
	Taking modulus on both sides of \eqref{initialdata.h1.eqn4}, for $i\in\left\{1,2,\cdots,N\right\}$, we get
	\begin{eqnarray}\label{initialdata.h1.eqn5}
	\left\|J_{\varepsilon}\tilde{v_{i}}(\Psi^{-1}_{i})\right\|_{L^{\infty}(\Omega)}\leq 2^{d}\left(\left\|\rho\right\|_{L^{\infty}(\mathbb{R})}\right)^{d}\,
	\|\eta_{i}\,u_{0}\|_{L^{\infty}(\Omega)}.
	\end{eqnarray}
	In view of \eqref{initialdata.h1.eqn5}, for $i=1,2,\cdots,N$, there exist constants $C_{i}>0$ such that 
	\begin{eqnarray}\label{initialdata.h1.eqn6}
	\|u_{0i}^{\varepsilon}\|_{L^{\infty}(\Omega)}\leq C_{i}
	\end{eqnarray}
	Again, observe that $\left\|\eta_{0} u_{0}\ast\rho_{\epsilon}\right\|_{L^{\infty}(\Omega)}\leq
	\left\|\eta_{0} u_{0}\right\|_{L^{\infty}(\Omega)}$ on $U_{N+1}$.\\
	Taking \\$A=\max\left\{C_{1}, C_{2},\cdots,C_{d}, \left\|\eta_{0} u_{0}\right\|_{L^{\infty}(\Omega)}\right\}$, we conclude the proof of Lemma \ref{hypothesisDinitialdatalem}$\blacksquare$\\
\end{proof}	
\vspace{0.2cm}\\
We give the next result for any space dimension $d\in\mathbb{N}$. A proof of this result can be found in \cite{Ramesh}. The sequence $\left(u_{0\varepsilon}\right)$ of Lemma \ref{regularized.lem1} is used in \textbf{Hypothesis F}.
\begin{lemma}\label{regularized.lem1} {\rm The sequence $(u_{0\varepsilon})$ has the following properties:
	\begin{enumerate} 
		\item $u_{0\varepsilon}\to u_0$ in the sense of intermediate (or strict) convergence in $BV(\Omega)$, {\it i.e.,}
		\begin{equation}\nonumber
		u_{0\varepsilon}\to u_0\,\,\mbox{in}\,\, L^1(\Omega),\quad \int_{\Omega}|Du_{0\varepsilon}|\to \int_{\Omega}|Du_{0}|.
		\end{equation}
		\item There exists a constant $A> 0$ such that for all $\varepsilon > 0$,
		\begin{equation}\label{PhucTorresseqbounds}
		\|u_{0\varepsilon}\|_{L^{\infty}(\Omega)}+ \|\nabla u_{0\varepsilon}\|_{\left(L^{1}(\Omega)\right)^{d}}+\varepsilon\, \|\Delta u_{0\varepsilon}\|_{L^{1}(\Omega)}+ \varepsilon\, \|\nabla u_{0\varepsilon}\|^2_{\left(L^{2}(\Omega)\right)^{d}}\leq A.\nonumber
		\end{equation}
	\end{enumerate}
}
\end{lemma}
\vspace{0.2cm}
Applying a result from \cite{lad-etal_68a}, we conclude the following existence of a unique classical solution for IBVP \eqref{regularized.IBVP}.
\begin{theorem}\cite[p.452]{lad-etal_68a}[\textbf{Unique Classical Solution}]\label{ExistenceofClassicalLadyzenskajap452}
{\rm Let $f$, $B$, $u_0$ satisfy Hypothesis D. Then there exists a unique solution $u^\varepsilon$ of generalized viscosity problem \eqref{regularized.IBVP} in the space
	$C^{2+\beta,\frac{2+\beta}{2}}(\overline{\Omega_T})$. Further, for each $i=1,2,\cdots,d$ the second order partial derivatives $u^\varepsilon_{x_i t}$ belong to $L^2(\Omega_T)$.}  
\end{theorem}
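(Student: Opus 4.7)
The plan is to deduce the result as a direct application of the quasilinear parabolic existence theorem in \cite{lad-etal_68a}, supplemented by a standard energy estimate for the mixed second-order derivatives. First, I would rewrite \eqref{ibvp.parab.a} in non-divergence form as
\begin{equation*}
u_t + \bigl(f'(u) - \varepsilon B'(u)\nabla u\bigr)\cdot\nabla u \;=\; \varepsilon\, B(u)\,\Delta u,
\end{equation*}
so that the equation falls within the class of uniformly parabolic quasilinear equations treated in Chapter V of \cite{lad-etal_68a}. The coefficient of $\Delta u$ is $\varepsilon B(u)$, which by Hypothesis D/E/F(b) satisfies $\varepsilon B(u)\geq \varepsilon r>0$ uniformly, giving the required uniform parabolicity on any bounded set of values of $u$.

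Next I would verify that the structural hypotheses of the Ladyzenskaja--Solonnikov--Uraltseva theorem are satisfied. Since $f\in (C^2(\mathbb R))^d$ and $B\in C^1(\mathbb R)\cap L^\infty(\mathbb R)$, both the principal coefficient $\varepsilon B(u)$ and the lower-order terms $f'(u)-\varepsilon B'(u)\nabla u$ depend smoothly (in fact with H\"older continuity of the relevant order) on $(x,t,u,\nabla u)$; the natural quadratic growth in $|\nabla u|$ of the lower-order term is harmless because $B'$ is bounded on the range of any a priori $L^\infty$-solution. A uniform a priori $L^\infty$ bound for $u^\varepsilon$ follows from the maximum principle applied to the parabolic operator above, using that $u_{0\varepsilon}\in\mathcal D(\Omega)$ is bounded and the boundary data is zero. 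For the boundary regularity and compatibility, I note that $\partial\Omega$ is smooth, $u_{0\varepsilon}\in\mathcal D(\Omega)$ has compact support in $\Omega$, so $u_{0\varepsilon}$ vanishes in a neighbourhood of $\partial\Omega$; this gives compatibility conditions of any order between initial and boundary data. With these verifications, the cited theorem yields a unique classical solution $u^\varepsilon\in C^{2+\beta,(2+\beta)/2}(\overline{\Omega_T})$ for some $\beta\in(0,1)$.

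Finally, to obtain $u^\varepsilon_{x_i t}\in L^2(\Omega_T)$ for each $i$, I would differentiate the divergence form equation \eqref{ibvp.parab.a} in $x_i$, test the resulting identity against $u^\varepsilon_{x_i t}$, and integrate by parts over $\Omega_T$, using that $u_{0\varepsilon}$ has compact support in $\Omega$ (so boundary terms vanish at $t=0$) and that $u^\varepsilon_{x_i t}$ vanishes on $\partial\Omega\times(0,T)$ on the tangential components. The leading quadratic term produces
\begin{equation*}
\int_0^T\!\!\int_\Omega |u^\varepsilon_{x_i t}|^2\,dx\,dt,
\end{equation*}
and all remaining terms are controlled by the already established $C^{2+\beta,(2+\beta)/2}$ regularity together with the $L^2$-type bound $\sqrt{\varepsilon}\,\|\nabla u_{0\varepsilon}\|_{L^2}\leq A$ from Lemma~\ref{regularized.lem1} (or its analogue in the $L^\infty$ setting). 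The main obstacle I anticipate is not the existence itself, which is essentially a citation, but the careful verification of the boundary compatibility and the absorption of the nonlinear drift term $\varepsilon B'(u)|\nabla u|^2$ into the parabolic scaling when invoking \cite{lad-etal_68a}; once those are confirmed, the energy estimate for $u^\varepsilon_{x_i t}$ is routine.
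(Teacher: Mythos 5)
Your proposal takes essentially the same route as the paper: the paper offers no proof of this statement at all beyond the citation to \cite[p.452]{lad-etal_68a}, so your work of rewriting the equation in non-divergence form, checking uniform parabolicity via $\varepsilon B\geq\varepsilon r$, noting that $u_{0\varepsilon}\in\mathcal{D}(\Omega)$ gives compatibility conditions of all orders, and sketching an energy estimate for $u^\varepsilon_{x_i t}$ simply fills in the verification that the paper leaves implicit. The only caution is that the final energy argument for $u^\varepsilon_{x_i t}\in L^2(\Omega_T)$ is left as a sketch (the integration by parts produces third-order terms such as $\nabla u^\varepsilon_{x_i t}$ that must be absorbed into a time derivative of $\frac{\varepsilon}{2}\int_\Omega B(u^\varepsilon)\,|\nabla u^\varepsilon_{x_i}|^2\,dx$ before the estimate closes), but since the paper attributes this conclusion to the same citation, nothing in your outline conflicts with the paper's treatment.
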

\subsection{Estimates on Solutions and its derivatives}
We now recall the maximum principle of generalized viscosity problem \eqref{regularized.IBVP} from \cite[p.12]{Ramesh}, {\it i.e.,}
\begin{theorem}[Maximum principle]\label{chap3thm1}
{\rm Let $f:\mathbb{R}\to\mathbb{R}^{d}$ be a $C^{1}$ function and $u_{0}\in L^{\infty}(\Omega)$. Then any solution $u^{\varepsilon}$ of generalized viscosity problem \eqref{regularized.IBVP} satisfies the bound
\begin{equation}\label{eqnchap303}
||u^{\varepsilon}(\cdot,t)||_{L^{\infty}(\Omega)}\,\leq\,||u_{0}||_{L^{\infty}(\Omega)}\,a.e.\,\,t\in(0,T).
\end{equation}
}
\end{theorem}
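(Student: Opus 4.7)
The plan is to run a standard convex entropy argument, which is clean here because Theorem~\ref{ExistenceofClassicalLadyzenskajap452} supplies a classical $C^{2+\beta,(2+\beta)/2}(\overline{\Omega_T})$ solution, so all pointwise manipulations and integrations by parts are legitimate without any regularization. Write $M:=\|u_{0\varepsilon}\|_{L^\infty(\Omega)}$ (which is finite by construction and nonnegative). The goal is to show $u^\varepsilon\le M$ a.e.; the lower bound $u^\varepsilon\ge -M$ follows by a mirror argument, e.g.\ by running the same scheme on $-u^\varepsilon$, which satisfies an equation of the same structure.

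First I choose a smooth convex ``cut-off at $M$'' entropy. Pick $\eta\in C^2(\mathbb{R})$ with $\eta\ge 0$, $\eta\equiv 0$ on $(-\infty,M]$, and $\eta''>0$ on $(M,\infty)$; a concrete choice is $\eta(s)=((s-M)^+)^3$. Define an associated entropy flux and viscous potential by
\begin{equation*}
q(s):=\int_M^s\eta'(\tau)\,f'(\tau)\,d\tau,\qquad \Phi(s):=\int_M^s\eta'(\tau)\,B(\tau)\,d\tau,
\end{equation*}
both of which vanish on $(-\infty,M]$. Multiplying \eqref{ibvp.parab.a} by $\eta'(u^\varepsilon)$ and using the chain rule yields the pointwise identity
\begin{equation*}
\partial_t\eta(u^\varepsilon)+\nabla\cdot q(u^\varepsilon)=\varepsilon\,\nabla\cdot\Phi(u^\varepsilon)-\varepsilon\,B(u^\varepsilon)\,\eta''(u^\varepsilon)\,|\nabla u^\varepsilon|^2.
\end{equation*}

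Next I integrate over $\Omega$ and use the boundary condition \eqref{ibvp.parab.b}. Because $M\ge 0$, we have $\eta(0)=\eta'(0)=0$ and consequently $q(0)=0$, $\Phi(0)=0$. Since $u^\varepsilon$ vanishes on $\partial\Omega$, the divergence theorem kills both the convective and viscous boundary contributions, giving
\begin{equation*}
\frac{d}{dt}\int_\Omega\eta(u^\varepsilon(x,t))\,dx=-\varepsilon\int_\Omega B(u^\varepsilon)\,\eta''(u^\varepsilon)\,|\nabla u^\varepsilon|^2\,dx\le 0,
\end{equation*}
where the sign follows from $B\ge r>0$ and $\eta''\ge 0$. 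By construction $u_{0\varepsilon}\le M$, so $\eta(u_{0\varepsilon})\equiv 0$, hence $\int_\Omega\eta(u^\varepsilon(\cdot,t))\,dx\le 0$ for a.e.\ $t$. Combined with $\eta\ge 0$, this forces $\eta(u^\varepsilon(\cdot,t))=0$ a.e., i.e.\ $u^\varepsilon(\cdot,t)\le M$ a.e. Applying the same recipe with $\tilde\eta(s):=\eta(-s)$ (equivalently, to the problem solved by $-u^\varepsilon$, whose flux $\tilde f(v)=-f(-v)$ and diffusion $\tilde B(v)=B(-v)$ fall under the same hypotheses) produces the matching lower bound, and \eqref{eqnchap303} follows.

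The only delicate point is the vanishing of the boundary integrals after integrating by parts: it depends crucially on the fact that the ``threshold'' $M$ in the convex entropy is nonnegative, so that the entropy and its derivative both vanish at the boundary value $0$. Since $M=\|u_{0\varepsilon}\|_{L^\infty}\ge 0$ automatically, this is free and no case distinction is required; no approximation of $\eta$ is needed either because the chosen $\eta\in C^2$ is already smooth enough for the chain rule on the classical solution provided by Theorem~\ref{ExistenceofClassicalLadyzenskajap452}.
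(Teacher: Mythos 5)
Your argument is correct and self-contained. The paper itself does not prove Theorem~\ref{chap3thm1}; it only recalls it from the reference \cite{Ramesh}, so there is no in-paper proof to match against. Your route --- a smooth Stampacchia-type convex entropy $\eta(s)=((s-M)^{+})^{3}$ cut off at the threshold $M$, combined with the observation that $\eta'(0)=0$ (because $M\geq 0$) kills both boundary terms after integration by parts --- is a standard and legitimate way to obtain the $L^{\infty}$ bound, and all the pointwise manipulations are justified by the classical regularity supplied by Theorem~\ref{ExistenceofClassicalLadyzenskajap452}. The computation $\eta'(u)\,\nabla\cdot(B(u)\nabla u)=\Delta\Phi(u)-B(u)\,\eta''(u)\,|\nabla u|^{2}$ is right, the dissipation term has the correct sign since $B\geq r>0$ and $\eta''\geq 0$, and the reflection argument for the lower bound is sound. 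Two small remarks. First, in your pointwise identity the viscous term should read $\varepsilon\,\Delta\Phi(u^{\varepsilon})$ (equivalently $\varepsilon\,\nabla\cdot\left(\nabla\Phi(u^{\varepsilon})\right)$) rather than $\varepsilon\,\nabla\cdot\Phi(u^{\varepsilon})$, since $\Phi$ is scalar-valued; this is purely notational. Second, you prove the bound with $M=\|u_{0\varepsilon}\|_{L^{\infty}(\Omega)}$ while the theorem is stated with $\|u_{0}\|_{L^{\infty}(\Omega)}$; this discrepancy is inherited from the paper's own loose notation (problem \eqref{ibvp.parab} has initial datum $u_{0\varepsilon}$, and Theorem~\ref{regularized.chap3thm1} subsequently invokes the bound precisely through $\|u_{0\varepsilon}\|_{L^{\infty}(\Omega)}\leq A$), so your reading is the one consistent with how the result is actually used.
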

Applying Theorem \ref{chap3thm1} to regularized viscosity problem \eqref{regularized.IBVP} and using $\|u_{0\varepsilon}\|_{L^{\infty}(\Omega)}
\leq A$, we conclude the next result.
\begin{theorem}\label{regularized.chap3thm1}
{\rm Let $f,\,B,\,$ and $u_{0}$ be as in Hypothesis D, Hypothesis E and and Hypothesis F. Then any solution $u$ of generalized viscosity problem \eqref{regularized.IBVP}  satisfies the bound
\begin{equation}\label{regularized.eqnchap303}
||u^{\varepsilon}(\cdot,t)||_{L^{\infty}(\Omega_{T})}\,\leq\,A.
\end{equation}
}
\end{theorem}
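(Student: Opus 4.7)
The statement is essentially a direct corollary of the maximum principle (Theorem~\ref{chap3thm1}) combined with the uniform $L^\infty$-bound on the initial approximations $(u_{0\varepsilon})$ that is built into all three hypotheses. So the plan is very short: verify the applicability of Theorem~\ref{chap3thm1} to the regularized problem~\eqref{regularized.IBVP}, and then substitute the uniform bound on $(u_{0\varepsilon})$.

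More precisely, I would first note that under each of Hypothesis D, E, and F, the flux $f$ satisfies $f\in (C^2(\mathbb{R}))^d$ (in particular $C^1$), and the initial datum $u_{0\varepsilon}$ of the regularized problem lies in $\mathcal{D}(\Omega)\subset L^\infty(\Omega)$. Hence the hypotheses of Theorem~\ref{chap3thm1} are met with initial datum $u_{0\varepsilon}$ in place of $u_0$. Applying that theorem to $u^\varepsilon$, the classical solution of~\eqref{regularized.IBVP} furnished by Theorem~\ref{ExistenceofClassicalLadyzenskajap452}, yields
\begin{equation*}
\|u^{\varepsilon}(\cdot,t)\|_{L^{\infty}(\Omega)}\;\leq\;\|u_{0\varepsilon}\|_{L^{\infty}(\Omega)}\qquad\text{a.e. }t\in(0,T).
\end{equation*}

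Next I would invoke the $L^\infty$-bound on the sequence $(u_{0\varepsilon})$ which is part of each hypothesis: Hypothesis~D.c and Hypothesis~E.3 state directly that $\|u_{0\varepsilon}\|_{L^\infty(\Omega)}\le A$ (with the constant $A$ coming from Lemma~\ref{hypothesisDinitialdatalem}), while under Hypothesis~F the bound $\|u_{0\varepsilon}\|_{L^\infty(\Omega)}\le A$ is the first summand in the inequality of Lemma~\ref{regularized.lem1}. Chaining these two estimates gives $\|u^{\varepsilon}(\cdot,t)\|_{L^{\infty}(\Omega)}\le A$ for a.e. $t\in(0,T)$, which is exactly the required bound $\|u^{\varepsilon}\|_{L^{\infty}(\Omega_T)}\le A$.

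There is no real obstacle here; the only thing worth checking carefully is that the constant $A$ appearing in Theorem~\ref{regularized.chap3thm1} is the \emph{same} constant $A$ used in the three hypotheses to define the interval $I=[-A,A]$, so the conclusion is consistent with later arguments (where one works with entropies whose support lies in $I$). This is automatic from the way $A$ is chosen in Lemma~\ref{hypothesisDinitialdatalem} (Step~2) and Lemma~\ref{regularized.lem1}, so no additional work is needed.
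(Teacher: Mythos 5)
Your proposal is correct and follows exactly the paper's own route: apply the maximum principle (Theorem~\ref{chap3thm1}) to \eqref{regularized.IBVP} with initial datum $u_{0\varepsilon}$, then invoke the uniform bound $\|u_{0\varepsilon}\|_{L^{\infty}(\Omega)}\le A$ supplied by the hypotheses. Your additional remark about the consistency of the constant $A$ with the interval $I=[-A,A]$ is a sensible check, but it is not needed beyond what the paper already does.
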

Quasilinear viscous approximations $\left(u^{\varepsilon}\right)$ obtained in
Theorem \ref{ExistenceofClassicalLadyzenskajap452} satisfies the following estimate. This is a very useful result.
\begin{theorem}\label{Compactness.lemma.1}
{\rm Let $f,\,\,B,\,\,u_{0}$ satisfy Hypothesis D, Hypothesis E and Hypothesis F. Let $u^{\varepsilon}$ be the unique solution to generalized 
viscosity problem \eqref{regularized.IBVP}. Then 
 \begin{eqnarray}\label{uniformnot.compactness.eqn1a}
 \displaystyle\sum_{j=1}^{d} \,\left(\sqrt{\varepsilon}\Big\| \frac{\partial u^{\varepsilon}}{\partial x_{j}}\Big\|_{L^{2}(\Omega_{T})}\right)^{2} \leq\frac{1}{2r}\|u_{0\varepsilon}\|^{2}_{L^{2}(\Omega)}\leq\frac{1}{2r} A^2\,\,\mbox{Vol}(\Omega).
 \end{eqnarray}
}
\end{theorem}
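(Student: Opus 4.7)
The plan is to carry out the standard energy estimate for \eqref{regularized.IBVP}. Since Theorem~\ref{ExistenceofClassicalLadyzenskajap452} gives a classical solution $u^{\varepsilon}\in C^{2+\beta,(2+\beta)/2}(\overline{\Omega_T})$, all integrations by parts below are rigorously justified, and the boundary trace $u^{\varepsilon}=0$ on $\partial\Omega\times(0,T)$ is taken in the classical sense. I would multiply \eqref{regularized.IBVP.a} by $u^{\varepsilon}$ and integrate over $\Omega_{T}$.

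For the time derivative, $\int_{0}^{T}\!\!\int_{\Omega}u^{\varepsilon}u^{\varepsilon}_{t}\,dx\,dt=\tfrac12\|u^{\varepsilon}(\cdot,T)\|_{L^{2}(\Omega)}^{2}-\tfrac12\|u_{0\varepsilon}\|_{L^{2}(\Omega)}^{2}$. For the flux term, I would write $u^{\varepsilon}\nabla\!\cdot f(u^{\varepsilon})=\nabla\!\cdot\!\bigl(u^{\varepsilon}f(u^{\varepsilon})-G(u^{\varepsilon})\bigr)$, where $G=(G_1,\dots,G_d)$ is any antiderivative of $f$, i.e.\ $G_j'=f_j$. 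Since $u^{\varepsilon}=0$ on $\partial\Omega$, one has $u^{\varepsilon}f(u^{\varepsilon})=0$ and $G(u^{\varepsilon})=G(0)$ there, and the outward normal integrates to zero over the closed surface $\partial\Omega$, so the divergence theorem makes this term vanish. For the viscous term, integration by parts gives
\begin{equation*}
\int_{0}^{T}\!\!\int_{\Omega}u^{\varepsilon}\,\varepsilon\,\nabla\!\cdot\!\bigl(B(u^{\varepsilon})\nabla u^{\varepsilon}\bigr)\,dx\,dt
=-\varepsilon\int_{0}^{T}\!\!\int_{\Omega}B(u^{\varepsilon})|\nabla u^{\varepsilon}|^{2}\,dx\,dt,
\end{equation*}
the boundary term vanishing again because $u^{\varepsilon}\equiv 0$ on $\partial\Omega$.

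Combining these three pieces yields
\begin{equation*}
\varepsilon\int_{0}^{T}\!\!\int_{\Omega}B(u^{\varepsilon})|\nabla u^{\varepsilon}|^{2}\,dx\,dt
=\tfrac12\|u_{0\varepsilon}\|_{L^{2}(\Omega)}^{2}-\tfrac12\|u^{\varepsilon}(\cdot,T)\|_{L^{2}(\Omega)}^{2}
\leq \tfrac12\|u_{0\varepsilon}\|_{L^{2}(\Omega)}^{2}.
\end{equation*}
Invoking Hypothesis D(b)/E(b)/F(b), namely $B\geq r$, gives $\varepsilon r\sum_{j=1}^{d}\|\partial_{x_j}u^{\varepsilon}\|_{L^{2}(\Omega_T)}^{2}\leq \tfrac12\|u_{0\varepsilon}\|_{L^{2}(\Omega)}^{2}$, which is the first inequality of \eqref{uniformnot.compactness.eqn1a}. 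The second inequality is immediate from $\|u_{0\varepsilon}\|_{L^{\infty}(\Omega)}\leq A$ (Hypothesis D(c), E(3), or F via Lemma~\ref{regularized.lem1}).

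There is no serious obstacle here; the only subtlety worth checking is the vanishing of the flux boundary integral, which follows from the identity above together with the zero Dirichlet condition and the fact that the integral of the outward unit normal over the closed surface $\partial\Omega$ is zero. Everything else is a textbook $L^{2}$ multiplier argument for a uniformly parabolic equation in divergence form.
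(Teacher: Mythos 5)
Your proof is correct. The paper itself does not reproduce an argument for this estimate (it only cites \cite{Ramesh}), but your derivation — multiplying \eqref{regularized.IBVP.a} by $u^{\varepsilon}$, writing the flux term as an exact divergence that vanishes on the boundary via the zero Dirichlet data, integrating the viscous term by parts, and using $B\geq r$ together with $\|u_{0\varepsilon}\|_{L^{\infty}(\Omega)}\leq A$ — is exactly the standard energy estimate that the cited source carries out, and every step is justified by the classical regularity from Theorem~\ref{ExistenceofClassicalLadyzenskajap452}.
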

A proof of Theorem \ref{Compactness.lemma.1} follows from \cite{Ramesh}.
\section{Compactness of quasilinear viscous approximations}\label{compactnesscompensatedofsolutions}
In this section, we show the existence of an {\it a.e.} convergent subsequence of sequence 
of quasilinear viscous approximations $\left(u^{\varepsilon}\right)$ which are solutions to generalized viscosity problem \eqref{regularized.IBVP}.
In order to extract {\it a.e.} convergent subsequence of quasiliear viscous approximations $\left(u^{\varepsilon}\right)$, we use the method of Compensated Compactness. We use Divergence-Curl lemma for $d=1$ and for $d=2$, we use Theorem of Compensated Compactness.\\
\vspace{0.1cm}\\
The following result shows that the quasilinear viscous approximations $\left(u^{\varepsilon}\right)$ satisfies compact entropy
productions. We prove Theorem \ref{chap9thm2} with Hypothesis E in any space dimension $d\in\mathbb{N}$ and the same proof works with Hypothesis F.
\begin{theorem}\label{chap9thm2}
{\rm Assume \textbf{Hypothesis E} and let $\left(u^{\varepsilon}\right)$ be as in Theorem \ref{ExistenceofClassicalLadyzenskajap452}.
Then 
 \begin{equation}\label{chap9eqn2}
  \frac{\partial \eta(u^{\varepsilon})}{\partial t} + \displaystyle\sum_{j=1}^{d}\frac{\partial q_{j}(u^{\varepsilon})}{\partial x_{j}}\hspace{0.2cm}\subset\,\mbox{compact set in }\hspace{0.2cm} H^{-1}(\Omega_{T})
 \end{equation}
for every $C^{2}(\mathbb{R})$ entropy-entropy flux pair $(\eta, q)$ of conservation laws \eqref{ivp.cl.a}.}
\end{theorem}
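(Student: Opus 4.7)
The plan is to run the classical compensated-compactness-style argument: derive an identity for the entropy production by applying the chain rule to the viscous PDE, split the right-hand side into a ``divergence of something small'' plus a dissipation defect bounded in $L^1$, and finally invoke the Murat--Tartar lemma to conclude $H^{-1}$-compactness.

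First, I would fix a $C^2$ entropy--flux pair $(\eta,q)$ with $q_j'(\cdot)=\eta'(\cdot)f_j'(\cdot)$. Multiplying \eqref{regularized.IBVP.a} by $\eta'(u^\varepsilon)$ and using the chain rule yields
\begin{equation*}
\frac{\partial \eta(u^{\varepsilon})}{\partial t} + \sum_{j=1}^{d}\frac{\partial q_{j}(u^{\varepsilon})}{\partial x_{j}}
\;=\; \varepsilon\,\eta'(u^{\varepsilon})\,\nabla\!\cdot\!\bigl(B(u^{\varepsilon})\nabla u^{\varepsilon}\bigr)
\;=\; I_{\varepsilon} + J_{\varepsilon},
\end{equation*}
where
\begin{equation*}
I_{\varepsilon} := \varepsilon\,\nabla\!\cdot\!\bigl(\eta'(u^{\varepsilon})\,B(u^{\varepsilon})\,\nabla u^{\varepsilon}\bigr),
\qquad
J_{\varepsilon} := -\,\varepsilon\,\eta''(u^{\varepsilon})\,B(u^{\varepsilon})\,|\nabla u^{\varepsilon}|^{2}.
\end{equation*}

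Next I would estimate the two pieces separately. For $I_\varepsilon$, I write
$\varepsilon\,\eta'(u^{\varepsilon})B(u^{\varepsilon})\nabla u^{\varepsilon}=\sqrt{\varepsilon}\cdot \bigl(\eta'(u^{\varepsilon})B(u^{\varepsilon})\bigr)\cdot \bigl(\sqrt{\varepsilon}\,\nabla u^{\varepsilon}\bigr)$; by Theorem~\ref{regularized.chap3thm1} the middle factor is bounded in $L^{\infty}(\Omega_T)$ (since $\eta',B$ are continuous on the compact interval $I$), and by Theorem~\ref{Compactness.lemma.1} the factor $\sqrt{\varepsilon}\,\nabla u^{\varepsilon}$ is bounded in $L^{2}(\Omega_T)^d$. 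Hence $\varepsilon\,\eta'(u^{\varepsilon})B(u^{\varepsilon})\nabla u^{\varepsilon}\to 0$ in $L^{2}(\Omega_T)^d$, so $I_{\varepsilon}\to 0$ strongly in $H^{-1}(\Omega_T)$. For $J_\varepsilon$, using the same $L^{\infty}$ bound on $\eta''(u^{\varepsilon})B(u^{\varepsilon})$ and Theorem~\ref{Compactness.lemma.1},
\begin{equation*}
\|J_{\varepsilon}\|_{L^{1}(\Omega_T)} \;\leq\; \|\eta''\|_{L^{\infty}(I)}\|B\|_{L^{\infty}(\mathbb{R})}\,\varepsilon\,\|\nabla u^{\varepsilon}\|^{2}_{L^{2}(\Omega_T)^{d}} \;\leq\; C,
\end{equation*}
so $(J_\varepsilon)$ is bounded in $L^{1}(\Omega_T)$, hence bounded in the space of Radon measures $\mathcal{M}(\Omega_T)$.

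Finally, I would verify the third ingredient Murat's lemma needs: the whole left-hand side is bounded in $W^{-1,p}(\Omega_T)$ for some $p>2$. This follows because $\eta(u^{\varepsilon})$ and $q_{j}(u^{\varepsilon})$ are uniformly bounded in $L^{\infty}(\Omega_T)\subset L^{p}(\Omega_T)$ for every $p<\infty$ (using boundedness of $\Omega_T$ and Theorem~\ref{regularized.chap3thm1}), so their first-order distributional derivatives are bounded in $W^{-1,p}(\Omega_T)$. With the three ingredients
\begin{equation*}
I_{\varepsilon}\ \text{precompact in } H^{-1}(\Omega_T),\qquad
J_{\varepsilon}\ \text{bounded in } \mathcal{M}(\Omega_T),\qquad
I_{\varepsilon}+J_{\varepsilon}\ \text{bounded in } W^{-1,p}(\Omega_T),\ p>2,
\end{equation*}
the Murat--Tartar interpolation lemma yields that $I_{\varepsilon}+J_{\varepsilon}=\partial_t\eta(u^{\varepsilon})+\sum_j\partial_{x_j}q_j(u^{\varepsilon})$ lies in a compact subset of $H^{-1}(\Omega_T)$, which is \eqref{chap9eqn2}.

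The argument has no real obstacle: each estimate is a direct application of the $L^\infty$-bound and the dissipation estimate already available. The only point requiring slight care is the application of Murat's lemma on the bounded open set $\Omega_T$ (as opposed to a $\mathbb{R}^{d+1}$ statement) and the observation that no boundary data enter, because the entropy production is viewed as a distribution on the open set $\Omega_T$ tested against $H^{1}_{0}(\Omega_T)$.
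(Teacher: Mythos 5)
Your proposal is correct and follows essentially the same route as the paper: the identical decomposition of the entropy production into the divergence term $\varepsilon\,\nabla\cdot\bigl(\eta'(u^{\varepsilon})B(u^{\varepsilon})\nabla u^{\varepsilon}\bigr)$ (vanishing in $H^{-1}(\Omega_T)$ via the dissipation estimate of Theorem~\ref{Compactness.lemma.1}) plus the dissipation defect (bounded in $L^{1}$, hence in $\mathcal{M}(\Omega_T)$), followed by Murat's Lemma~\ref{chap9lem2}. Your explicit verification of the $W^{-1,p}$, $p>2$, boundedness hypothesis is a small completeness improvement over the paper, which leaves that ingredient implicit.
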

The following result is used to prove \eqref{chap9eqn2}.
\begin{lemma}{\rm\cite[p.514]{MR2169977}}\label{chap9lem2}
 {\rm Let $\Omega$ be an open subset of $\mathbb{R}^{d}$ and $\left(\phi_{n}\right)$ be a bounded sequence in 
 $W^{-1,p}(\Omega)$, for some $p > 2$. Further, let $\phi_{n}= \xi_{n} + \psi_{n}$, where $\left(\xi_{n}\right)$ 
 lies in a compact set of $H^{-1}(\Omega)$, while $\left(\psi_{n}\right)$ lies in a bounded set of the space of measures
 $M(\Omega)$. Then $\left(\phi_{n}\right)$ lies in a compact set of $H^{-1}(\Omega)$.} 
\end{lemma}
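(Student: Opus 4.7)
The plan is to follow the classical Murat compensated compactness argument: use the already-known compactness of $(\xi_n)$ to reduce everything to $(\psi_n)$, obtain convergence of $(\psi_n)$ in a \emph{weaker} negative Sobolev space via a compact embedding of $M(\Omega)$, and then interpolate against the given $W^{-1,p}$ bound to upgrade this convergence to $H^{-1}(\Omega)$. Since $(\xi_n)$ is already relatively compact in $H^{-1}(\Omega)$, after extracting a subsequence we may assume $\xi_n\to\xi$ strongly in $H^{-1}(\Omega)$, so it suffices to exhibit a subsequence of $(\psi_n) = (\phi_n-\xi_n)$ that converges in $H^{-1}(\Omega)$. Note that on a bounded $\Omega$ the embedding $W^{1,p'}_0(\Omega)\supset H^1_0(\Omega)$ (for $p>2$, $p'<2$) dualizes to $W^{-1,p}(\Omega)\hookrightarrow H^{-1}(\Omega)$; in particular $(\psi_n)$ is bounded both in $W^{-1,p}(\Omega)$ (by hypothesis on $\phi_n$ and $\xi_n$) and in $M(\Omega)$ (by hypothesis).

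Next, I would choose $q$ with $1\le q<\tfrac{d}{d-1}$ (any finite $q$ when $d=1$), so that $q'>d$. By the Morrey/Rellich embedding, $W^{1,q'}_0(\Omega)\hookrightarrow C_0(\overline\Omega)$ is compact, and dualizing this gives the compact embedding
\begin{equation*}
M(\Omega)\hookrightarrow W^{-1,q}(\Omega).
\end{equation*}
Consequently, a further subsequence of $(\psi_n)$ converges strongly in $W^{-1,q}(\Omega)$ to some $\psi$.

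Finally, I would interpolate between $W^{-1,q}$ (where I have strong convergence) and $W^{-1,p}$ (where I have a uniform bound). Since $q<2<p$, there is $\theta\in(0,1)$ with $\tfrac{1}{2}=\tfrac{1-\theta}{q}+\tfrac{\theta}{p}$, and the interpolation identity for negative Sobolev scales (obtained by duality from the Gagliardo--Nirenberg interpolation inequality applied to $W^{1,r}_0$) yields
\begin{equation*}
\|v\|_{H^{-1}(\Omega)}\le C\,\|v\|_{W^{-1,q}(\Omega)}^{1-\theta}\,\|v\|_{W^{-1,p}(\Omega)}^{\theta}
\end{equation*}
for all $v$ in the intersection. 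Applying this to $\psi_n-\psi_m$, the first factor tends to $0$ (Cauchy in $W^{-1,q}$) while the second stays bounded, so $(\psi_n)$ is Cauchy, hence convergent, in $H^{-1}(\Omega)$. Adding back $\xi_n\to\xi$ gives convergence of $\phi_n$ in $H^{-1}(\Omega)$.

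The main obstacle is the last step: making the interpolation estimate rigorous in the negative-index Sobolev scale. For bounded, sufficiently regular (say Lipschitz) $\Omega$ this follows from standard real or complex interpolation theory applied to the $W^{1,r}_0$ scale, but for a merely open $\Omega$ some care is needed (e.g.\ working locally, or passing through extensions by zero). In the intended application $\Omega$ is the bounded smooth cylinder $\Omega_T$, so this issue does not affect the use of the lemma in the paper.
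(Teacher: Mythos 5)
The paper itself offers no proof of this lemma: it is quoted verbatim from Dafermos \cite[p.514]{MR2169977} and used as a black box, so your attempt can only be judged on its own merits. Your outline is the classical Murat argument (compact embedding of $M(\Omega)$ into $W^{-1,q}(\Omega)$ for $q<\tfrac{d}{d-1}$, followed by interpolation between $W^{-1,q}$ and $W^{-1,p}$ across the exponent $2$), and that is the right strategy.

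There is, however, one genuine gap: the assertion that $(\psi_n)$ is bounded in $W^{-1,p}(\Omega)$ ``by hypothesis on $\phi_n$ and $\xi_n$.'' The hypotheses give $\phi_n$ bounded in $W^{-1,p}$ but $\xi_n$ only bounded in $H^{-1}$, and the embedding you correctly record goes the wrong way for this purpose: $W^{-1,p}\hookrightarrow H^{-1}$ for $p>2$, so $\psi_n=\phi_n-\xi_n$ is bounded only in the \emph{larger} space $H^{-1}$, not in $W^{-1,p}$. (Boundedness in $M(\Omega)$ does not help either, since $M(\Omega)\hookrightarrow W^{-1,q}$ only for $q<\tfrac{d}{d-1}\le 2$ when $d\ge 2$.) Consequently the interpolation inequality cannot be applied to $\psi_n-\psi_m$ as written. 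The repair is short and standard: extract a subsequence along which $\psi_n$ converges in $W^{-1,q}$ (your compact embedding) and $\xi_n$ converges in $H^{-1}\hookrightarrow W^{-1,q}$, so that $\phi_n=\xi_n+\psi_n$ itself converges in $W^{-1,q}$; then apply the interpolation estimate to $\phi_n-\phi_m$, which \emph{is} bounded in $W^{-1,p}$ by hypothesis, to conclude that $(\phi_n)$ is Cauchy in $H^{-1}(\Omega)$. A secondary, more minor point: the interpolation inequality $\|v\|_{H^{-1}}\le C\|v\|_{W^{-1,q}}^{1-\theta}\|v\|_{W^{-1,p}}^{\theta}$ is not obtained by simply ``dualizing'' Gagliardo--Nirenberg; the standard elementary route is to solve $-\Delta u_n+u_n=\phi_n-\phi_m$, use $L^r$ elliptic regularity for $r=q,p$, and interpolate the $L^r$ norms of $u_n$ and $\nabla u_n$ between $L^q$ and $L^p$. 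With these two corrections, and restricting to bounded $\Omega$ (which suffices for the application to $\Omega_T$, and is consistent with the usual formulation of the lemma with $H^{-1}_{loc}$ compactness), your proof is complete.
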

\textbf{Proof of Theorem \ref{chap9thm2}:} Let $\eta:\mathbb{R}\to \mathbb{R}$ be a convex, $C^{2}(\mathbb{R})$ entropy. Then for $j=1,2,\cdots,d$, there exist $C^{2}(\mathbb{R})$ functions $q_{j}:\mathbb{R}\to\mathbb{R}$ such that
 \begin{equation}\label{compactnessequation1}
  \eta^{\prime}f_{j}^{\prime}= q_{j}^{\prime}.
 \end{equation}
 Multiplying both sides the equation \eqref{regularized.IBVP.a} by $\eta^{\prime}(u^{\varepsilon})$ and using chain rule, we get
 \begin{equation}\label{compactnessequation2}
 \frac{\partial \eta(u^{\varepsilon})}{\partial t} + \displaystyle\sum_{j=1}^{d}\frac{\partial q_{j}(u^{\varepsilon})}{\partial x_{j}} = \varepsilon\,\displaystyle\sum_{j=1}^{d}\frac{\partial }{\partial x_{j}}\left(B(u^{\varepsilon})\frac{\partial u^{\varepsilon}}{\partial x_{j}}\right)\,\eta^{\prime}(u^{\varepsilon}).
\end{equation}
From equation \eqref{compactnessequation2}, we get
\begin{equation}\label{chap9eqn4}
 \frac{\partial \eta(u^{\varepsilon})}{\partial t} + \displaystyle\sum_{j=1}^{d}\frac{\partial q_{j}(u^{\varepsilon})}{\partial x_{j}} = \varepsilon\,\displaystyle\sum_{j=1}^{d}\frac{\partial }{\partial x_{j}}\left(B(u^{\varepsilon})\frac{\partial \eta(u^{\varepsilon})}{\partial x_{j}}\right) - \varepsilon \displaystyle\sum_{j=1}^{d}B(u^{\varepsilon})\,\left(\frac{\partial u}{\partial x_{j}}\right)^{2}\,\eta^{\prime\prime}(u^{\varepsilon})
\end{equation}
By appealing to Lemma \ref{chap9lem2}, \eqref{chap9eqn2} will be proved if we prove 
\begin{equation}\label{compactnessequation3}
 \varepsilon\,\displaystyle\sum_{j=1}^{d}\frac{\partial }{\partial x_{j}}\left(B(u^{\varepsilon})\frac{\partial \eta(u^{\varepsilon})}{\partial x_{j}}\right)\to 0\,\,\mbox{in}\,\,H^{-1}(\Omega_{T})
\end{equation}
and 
\begin{equation}\label{compactnessequation4}
 - \varepsilon \displaystyle\sum_{j=1}^{d}B(u^{\varepsilon})\,\left(\frac{\partial u}{\partial x_{j}}\right)^{2}\,\eta^{\prime\prime}(u^{\varepsilon})\hspace{0.2cm}\mbox{is bounded in the space of measures}\,\,M(\Omega_{T}).
\end{equation}
Firstly, we prove \eqref{compactnessequation3}. Note that
\begin{equation}\label{chap9eqn5}
 \Big\|\varepsilon\,\displaystyle\sum_{j=1}^{d}\frac{\partial }{\partial x_{j}}\left(B(u^{\varepsilon})\frac{\partial \eta(u^{\varepsilon})}{\partial x_{j}}\right)\Big\|_{H^{-1}(\Omega_{T})} = \displaystyle\sup_{\substack{\phi\in H^{1}_{0}(\Omega_{T}),\\ \|\phi\|_{H^{1}_{0}(\Omega_{T})}\leq 1}}\left|\int_{0}^{T}\int_{\Omega}\varepsilon\,\displaystyle\sum_{j=1}^{d}\frac{\partial }{\partial x_{j}}\left(B(u^{\varepsilon})\frac{\partial \eta(u^{\varepsilon})}{\partial x_{j}}\right)\,\phi\,dx\,dt\right|. 
\end{equation}
Using integration by parts formula and  $\|\phi\|_{H^{1}_{0}(\Omega_{T})}\leq 1$ in \eqref{chap9eqn5}, we arrive at 
\begin{equation}\label{chap9eqn6}
 \left|-\int_{0}^{T}\int_{\Omega}\varepsilon\,\displaystyle\sum_{j=1}^{d}B(u^{\varepsilon})\eta^{\prime}(u^{\varepsilon})\frac{\partial u^{\varepsilon}}{\partial x_{j}}\,\frac{\partial \phi}{\partial x_{j}}\,dx\,dt\right| \leq \varepsilon \|B\|_{L^{\infty}(\mathbb{R})}\|\eta^{'}\|_{L^{\infty}(I)}\|\nabla u^{\varepsilon}\|_{\left(L^{2}(\Omega_{T})\right)^{d}}.
\end{equation}
From  Theorem \ref{Compactness.lemma.1}, we have
\begin{equation}\label{Comp.chap9eqn26}
  \displaystyle\sum_{j=1}^{d} \,\left(\sqrt{\varepsilon}\Big\| \frac{\partial u^{\varepsilon}}{\partial x_{j}}\Big\|_{L^{2}(\Omega_{T})}\right)^{2} \leq\frac{1}{2r}\|u_{0\varepsilon}\|^{2}_{L^{2}(\Omega)}\leq
  \frac{1}{2r}A^{2}\mbox{Vol}(\Omega).
\end{equation}
Using \eqref{Comp.chap9eqn26} in \eqref{chap9eqn6} and letting $\varepsilon\to 0$ in \eqref{chap9eqn6}, we have \eqref{compactnessequation3}.\\
Secondly, we want to prove \eqref{compactnessequation4}.  We have $- \varepsilon \displaystyle\sum_{j=1}^{d}B(u^{\varepsilon})\,\left(\frac{\partial u}{\partial x_{j}}\right)^{2}\,\eta^{''}(u^{\varepsilon})\in L^{1}(\Omega_{T})$. We know that $L^{1}(\Omega_{T})$ is continuously imbedded in $\left(L^{\infty}(\Omega_{T})\right)^{\ast}$. Therefore, we have 
\begin{eqnarray}\label{chap9eqn11}
 \Big\| -\varepsilon \displaystyle\sum_{j=1}^{d}B(u^{\varepsilon})\,\left(\frac{\partial u}{\partial x_{j}}\right)^{2}\,\eta^{''}(u^{\varepsilon})\Big\|_{M(\Omega_{T})} &\leq& \Big\|-\varepsilon \displaystyle\sum_{j=1}^{d}B(u^{\varepsilon})\,\left(\frac{\partial u}{\partial x_{j}}\right)^{2}\,\eta^{''}(u^{\varepsilon})\Big\|_{L^{1}(\Omega_{T})},\nonumber\\
 &\leq& \varepsilon \|B\|_{L^{\infty}(\mathbb{R})}\,\|\eta^{''}\|_{L^{\infty}(I)}\|\nabla u\|_{\left(L^{2}(\Omega_{T})\right)^{d}}.
\end{eqnarray}
Using inequality \eqref{Comp.chap9eqn26} in \eqref{chap9eqn11}, we get
\begin{equation}\label{compactnessequation9}
 \Big\| -\varepsilon \displaystyle\sum_{j=1}^{d}B(u^{\varepsilon})\,\left(\frac{\partial u}{\partial x_{j}}\right)^{2}\,\eta^{''}(u^{\varepsilon})\Big\|_{M(\Omega_{T})} \leq C^{'}\|B\|_{L^{\infty}(\mathbb{R})}\|\eta^{''}\|_{L^{\infty}(I)},
\end{equation}
where $C^{'}$ is independent of $\varepsilon$. Therefore we have obtained \eqref{compactnessequation4}.
Using \eqref{compactnessequation3}, \eqref{compactnessequation4} and in view of Lemma \ref{chap9lem2}, we have \eqref{chap9eqn2}.
\\

For space dimension $d=1$, the extraction of an {\it a.e.} convergent subsequence is obtained by proving the following result.

\begin{theorem}\label{chap9thm3}
{\rm Assume \textbf{Hypothesis E} and let $\left(u^{\varepsilon}\right)$ be sequence of solutions to generalized viscosity problem \eqref{regularized.IBVP} such that 
 \begin{equation}\label{chap9eqn13}
  \frac{\partial \eta(u^{\varepsilon})}{\partial t} + \frac{\partial q(u^{\varepsilon})}{\partial x}\hspace{0.2cm}\subset\,\mbox{compact set in }\hspace{0.2cm} H^{-1}(\Omega_{T}).
 \end{equation}
for every $C^{2}(\mathbb{R})$ entropy-entropy flux pair $(\eta, q)$ of scalar conservation laws \eqref{ivp.cl.a} in one space dimension. Then there is a subsequence of $\left(u^{\varepsilon}\right)$ such that the subsequence is denoted by $\left(u^{\varepsilon}\right)$ and  the following 
convergence in $L^{\infty}(\Omega_{T})-\mbox{weak}^{\ast}$
\begin{equation*}
 u^{\varepsilon}\rightharpoonup u,\hspace{0.3cm}f(u^{\varepsilon})\rightharpoonup f(u),\hspace{0.2cm}\mbox{as}\,\varepsilon\to 0.
\end{equation*}
holds. Further, if the set of $u$ with $f^{''}(u)\neq 0$ is dense in $\mathbb{R}$, then $\left(u^{\varepsilon}\right)$ converges almost everywhere to $u$ in $\Omega_{T}$.}
\end{theorem}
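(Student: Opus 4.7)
The plan is to follow the classical Tartar compensated compactness argument, now made applicable by the compact entropy production hypothesis \eqref{chap9eqn13} and the uniform $L^\infty$ bound from Theorem \ref{regularized.chap3thm1}. First, since $(u^{\varepsilon})$ is bounded in $L^{\infty}(\Omega_T)$ by $A$, Banach--Alaoglu and the fundamental theorem on Young measures furnish a subsequence (still denoted $u^{\varepsilon}$) and a family of probability measures $\nu_{(x,t)}$ supported in $I=[-A,A]$, measurably depending on $(x,t)\in\Omega_T$, such that for every $g\in C(\R)$,
\begin{equation*}
g(u^{\varepsilon})\rightharpoonup \langle \nu_{(x,t)},\,g\rangle\quad\text{in }L^\infty(\Omega_T)\text{-weak}^\ast.
\end{equation*}
In particular $u^{\varepsilon}\rightharpoonup u:=\langle\nu,\mathrm{id}\rangle$ and $f(u^{\varepsilon})\rightharpoonup\langle\nu,f\rangle$, which already gives the first conclusion once we identify $\langle\nu,f\rangle$ with $f(u)$ in the final step.

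Next, I would apply the div-curl lemma of Murat--Tartar in the variables $(t,x)$. For any two $C^2$ entropy--entropy-flux pairs $(\eta_1,q_1)$, $(\eta_2,q_2)$, form
\begin{equation*}
U^{\varepsilon}:=(\eta_1(u^{\varepsilon}),\,q_1(u^{\varepsilon})),\qquad V^{\varepsilon}:=(q_2(u^{\varepsilon}),\,-\eta_2(u^{\varepsilon})).
\end{equation*}
By \eqref{chap9eqn13}, $\mathrm{div}_{(t,x)}U^{\varepsilon}$ and $\mathrm{curl}_{(t,x)}V^{\varepsilon}$ lie in a compact subset of $H^{-1}(\Omega_T)$, while both vector fields are bounded in $(L^\infty(\Omega_T))^2$. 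The div-curl lemma then yields convergence of the scalar product $U^{\varepsilon}\cdot V^{\varepsilon}=\eta_1(u^{\varepsilon})q_2(u^{\varepsilon})-q_1(u^{\varepsilon})\eta_2(u^{\varepsilon})$ in $\mathcal{D}'(\Omega_T)$ to the product of the weak-$*$ limits of its factors. Rewriting both sides via $\nu_{(x,t)}$ produces the Tartar commutation relation
\begin{equation*}
\langle\nu,\,\eta_1 q_2-\eta_2 q_1\rangle=\langle\nu,\eta_1\rangle\langle\nu,q_2\rangle-\langle\nu,\eta_2\rangle\langle\nu,q_1\rangle\quad\text{for a.e. }(x,t)\in\Omega_T.
\end{equation*}

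The final and hardest step is to deduce from this commutation relation, combined with the density hypothesis on $\{f''\neq 0\}$, that $\nu_{(x,t)}$ is a Dirac mass a.e. Fix such a point and write $\bar\nu:=\nu_{(x,t)}$. Specializing to $\eta_1(\xi)=\xi$, $q_1=f$ and $\eta_2(\xi)=f(\xi)$ with $q_2'(\xi)=f'(\xi)^2$ gives one scalar identity; varying the pair (for instance, by letting $\eta_2$ run over Kruzhkov-type entropies $\xi\mapsto|\xi-c|$ suitably mollified, with their associated fluxes) produces a family of such identities. A standard manipulation shows that these force the support of $\bar\nu$ to be contained in a set on which $f$ is affine. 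The density of $\{f''\neq 0\}$ in $\R$ prevents $f$ from being affine on any nondegenerate interval, so $\mathrm{supp}\,\bar\nu$ is a single point and $\bar\nu=\delta_{u(x,t)}$. This reduction is the main obstacle, since the hypothesis is weaker than strict convexity of $f$ and requires the more delicate Tartar--Murat argument rather than the direct Jensen-inequality shortcut. Once $\nu=\delta_u$ a.e., convergence in measure of $u^{\varepsilon}$ to $u$ follows from $\langle\nu,(\xi-u)^2\rangle=0$, and the uniform $L^\infty$ bound upgrades this, along a subsequence, to the claimed a.e. convergence, which in turn identifies $\langle\nu,f\rangle=f(u)$.
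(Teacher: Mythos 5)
Your proposal is correct and follows essentially the same route as the paper, which states the Young measure theorem and the div--curl lemma as the required tools and then defers the argument to Dafermos \cite[p.518]{MR2169977}; your outline (Young measures, div--curl applied to $(\eta_1,q_1)$ and $(q_2,-\eta_2)$, the Tartar commutation relation, and reduction to a Dirac mass via the density of $\{f''\neq 0\}$) is precisely that standard Tartar argument. The only point to tighten is the final reduction: the commutation relation with the pairs $(\mathrm{id},f)$ and $(f,g)$, $g'=(f')^2$, together with Cauchy--Schwarz forces $f'$ to be constant on the interval joining the barycenter $u$ to each point of $\mathrm{supp}\,\nu_{(x,t)}$, which the density hypothesis rules out unless the support is a single point.
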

The following results are used to prove Theorem \ref{chap9thm3}. 
\begin{theorem}[\textbf{Young Measure}]{\rm \cite[p.147]{MR584398}}\label{chap9thm1}
\begin{enumerate}
\item {\rm Let $K\subset \mathbb{R}^{p}$ be bounded and $\Omega\subset \mathbb{R}^{d}$ be an open set. Let $u_{n}:\Omega\to\mathbb{R}^{p}$ be such that $u_{n}\in K$ {\it a.e.}. Then there exists a subsequence $\left(u_{m}\right)$ and a family of probability measures $\left(\nu_{x}\right)_{x\in\Omega}$ $\left(\mbox{depending measurably on x}\right)$ with $\mbox{supp}\,\nu_{x}\subset \overline{K}$ such that if $F$ is continuous function on $\mathbb{R}^{p}$ and 
\begin{equation*}
 \overline{f} = <\nu_{x}, F(\lambda)>\,\, {\it a.e.}
\end{equation*}
then 
\begin{equation*}
 F(u_{m})\rightharpoonup \overline{f}(x)\hspace{0.2cm}\mbox{in}\hspace{0.2cm}L^{\infty}(\Omega)-\mbox{weak}^{\ast}
\end{equation*}
}
\item {\rm Conversely, let $\left(\nu_{x}\right)_{x\in\Omega}$ be a family of  probability measures with support in $ \overline{K}$. Then there exists a sequence  $\left(u_{n}\right)$, where $u_{n}:\Omega\to\mathbb{R}^{p}$ and $u_{n}\in K$ {\it a.e.}, such that for all continuous functions on $\mathbb{R}^{p}$, we have
\begin{equation*}
 F(u_{n})\rightharpoonup \overline{f}(x)=<\nu_{x}, F(\lambda)>\hspace{0.2cm}\mbox{in}\hspace{0.2cm}L^{\infty}(\Omega)-\mbox{weak}^{\ast}
\end{equation*}
}
\end{enumerate}
\end{theorem}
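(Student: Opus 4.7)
\medskip\noindent\textbf{Proof proposal.} The plan for Part 1 is to view the sequence $(u_n)$ as generating a sequence of ``slicing'' measures and apply Banach--Alaoglu in the dual of a separable space. I will define $\mu_n : \Omega \to \mathcal{M}(\overline{K})$ by $\mu_n(x) = \delta_{u_n(x)}$, the Dirac mass at $u_n(x)$, which is legitimate because $u_n(x)\in K$ almost everywhere. Each $\mu_n$ then determines a bounded linear functional on the separable Banach space $L^{1}(\Omega; C(\overline{K}))$ via $\phi \mapsto \int_\Omega \phi(x, u_n(x))\,dx$, with norm uniformly bounded by $|\Omega|$. Banach--Alaoglu will therefore yield a weak$^{*}$ convergent subsequence $\mu_m \to \nu$, and the classical disintegration (slicing) theorem realizes $\nu$ as a measurable family $\{\nu_x\}_{x\in\Omega}$ of finite Borel measures on $\overline{K}$.

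It then remains to verify the three required properties of $\{\nu_x\}$. Testing against $\phi(x,\lambda) = \chi_A(x)$ will give $\int_A \nu_x(\overline{K})\,dx = |A|$ for every measurable $A\subset\Omega$, forcing $\nu_x(\overline{K}) = 1$ almost everywhere; non-negativity of $\nu_x$ will follow from testing against nonnegative $\phi$; and the support containment $\mathrm{supp}\,\nu_x \subset \overline{K}$ is built into the construction. For the convergence assertion, given any $F\in C(\mathbb{R}^p)$ I will restrict $F$ to $\overline{K}$ (which suffices by the a.e.\ containment $u_m\in K$) and apply weak$^{*}$ convergence against $\phi(x,\lambda) = \psi(x) F(\lambda)$ for arbitrary $\psi\in L^{1}(\Omega)$; this gives precisely $F(u_m)\rightharpoonup \langle\nu_x,F\rangle$ in $L^{\infty}(\Omega)$-weak$^{*}$.

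For the converse (Part 2), the plan is a two-scale oscillation construction. For each $n$, I will partition $\Omega$ into cubes $\{Q_i^{(n)}\}$ of side $1/n$, pick a representative point $x_i^{(n)}\in Q_i^{(n)}$, and replace the datum $\nu_x$ on $Q_i^{(n)}$ by the constant slice $\nu_{x_i^{(n)}}$. On each $Q_i^{(n)}$ I will next discretize $\overline{K}$ into finitely many small Borel pieces, subdivide $Q_i^{(n)}$ into sub-cells of volume weighted by the mass $\nu_{x_i^{(n)}}$ assigns to each piece, and set $u_n$ equal to a point chosen from the corresponding piece on each sub-cell. A diagonal argument over a countable dense family $\{F_k\}\subset C(\overline{K})$ together with the uniform bound $u_n\in K$ will then upgrade this pointwise control to the desired $L^{\infty}$-weak$^{*}$ convergence $F(u_n)\rightharpoonup \langle\nu_x,F\rangle$ simultaneously for every $F\in C(\mathbb{R}^p)$ via a standard $3\varepsilon$ argument.

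The step I expect to be the main obstacle is the measurable realization of the weak$^{*}$ limit in Part 1: the dual space $\bigl(L^{1}(\Omega; C(\overline{K}))\bigr)^{*}$ is not metrizable, so the Banach--Alaoglu subsequence lives abstractly in that dual and must be disintegrated into an honest measurable family of probability measures on $\overline{K}$. Handling this carefully, typically via the separability of $C(\overline{K})$ and a Pettis-type measurability argument, is what makes the theorem more than a one-line application of Banach--Alaoglu; the remaining verifications and the converse construction are then essentially testing and approximation.
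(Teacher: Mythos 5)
The paper does not prove this theorem at all: it is imported verbatim from Tartar \cite[p.147]{MR584398} and used as a black box in the proof of Theorem \ref{chap9thm3}, so there is no in-paper argument to compare yours against. Your sketch is the standard proof of the fundamental theorem of Young measures (embed the slicing measures $\delta_{u_n(x)}$ into $\bigl(L^{1}(\Omega;C(\overline{K}))\bigr)^{*}$, extract a weak$^{*}$ limit via Banach--Alaoglu and separability, identify that dual with the weak$^{*}$-measurable maps $\Omega\to\mathcal{M}(\overline{K})$ to disintegrate the limit, and build the converse by a two-scale oscillation construction), and the plan is viable; you also correctly locate the real work in the duality/measurability step rather than in Banach--Alaoglu itself. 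Two points to tighten if you carry it out. First, the functional $\phi\mapsto\int_{\Omega}\phi(x,u_n(x))\,dx$ has operator norm at most $1$ on $L^{1}(\Omega;C(\overline{K}))$, not $|\Omega|$; this matters because the theorem only assumes $\Omega$ open, so $|\Omega|$ may be infinite, and correspondingly the test functions $\chi_{A}$ used to force $\nu_x(\overline{K})=1$ must be restricted to sets $A$ of finite measure. Second, in the converse the representative points $x_i^{(n)}$ cannot be arbitrary: they should be chosen as common Lebesgue points of the countably many moment functions $x\mapsto\langle\nu_x,F_k\rangle$, so that the piecewise-constant replacement of $\nu_x$ actually converges to the correct limit before you run the $3\varepsilon$ argument.
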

We give definition of Young measures.
\begin{definition}
 {\rm The family of probability measure $\left(\nu_{x}\right)_{x\in \Omega}$ that we get from Theorem \ref{chap9thm1} is called Young measures associated with the sequence $\left(u_{n}\right)_{n=1}^{\infty}$.}
\end{definition}
\begin{lemma}(\textbf{Div-curl lemma}){\rm \cite[p.90]{MR2582099},\cite[p.513]{MR2169977}}\label{chap9lem1}\\
 {\em Let $\Omega\subset\mathbb{R}^{d}$ be an open set and $G_{n}$ and $H_{n}$ be two sequences of vector fields in $L^{2}(\Omega;\mathbb{R}^{d})$ converging weakly to limits $\overline{G}$ and $\overline{H}$ respectively as $n\to \infty$. Assume both $\left(\mbox{div}\,G_{n}\right)$ and $\left(\mbox{curl}\,H_{n}\right)$ lie in a compact subset of $H^{-1}_{loc}(\Omega)$. Then we have the following convergence in the sense of distributions as $n\to \infty$ 
 \begin{equation*}
  G_{n}.H_{n}\to \overline{G}.\overline{H}.
 \end{equation*}
}
\end{lemma}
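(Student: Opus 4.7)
The plan is to prove the Div--Curl lemma via the classical Murat--Tartar approach: localize, apply a Helmholtz decomposition, and then exploit weak/strong convergence pairings in $L^2$. Since distributional convergence is a local statement, it suffices to prove $\int \varphi\, G_n \cdot H_n \to \int \varphi\, \overline{G} \cdot \overline{H}$ for every $\varphi \in C_c^\infty(\Omega)$. Fixing such a $\varphi$, I would replace the original sequences by $\varphi G_n$ and $\varphi H_n$ extended by zero to $\R^d$; the identity $\mathrm{div}(\varphi G_n) = \varphi\, \mathrm{div}\, G_n + \nabla\varphi \cdot G_n$ (and its curl analogue), combined with Rellich--Kondrachov applied to the $L^2$-bounded remainder term, shows that the localized sequences retain all the hypotheses on $\R^d$. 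Subtracting the weak limits, I may further reduce to the case $\overline{G} = \overline{H} = 0$, since the cross terms involving the deterministic limits converge trivially by weak-times-$L^2$ pairing.

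The heart of the argument is a Helmholtz decomposition of the compactly supported $H_n$. Solving $\Delta \phi_n = \mathrm{div}\, H_n$ via the Newtonian potential, I set $R_n := H_n - \nabla \phi_n$, so that $\mathrm{div}\, R_n = 0$ and $\mathrm{curl}\, R_n = \mathrm{curl}\, H_n$ remains compact in $H^{-1}(\R^d)$. A Fourier-side computation (equivalently, a Biot--Savart representation via a vector potential $\Delta A_n = -\mathrm{curl}\, H_n$) shows that a divergence-free sequence whose curl is compact in $H^{-1}$ is strongly relatively compact in $L^2_{loc}$; along a subsequence, $R_n \to R^*$ strongly in $L^2_{loc}$. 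Simultaneously $\phi_n$ is bounded in $\dot H^1(\R^d)$, hence bounded in $H^1_{loc}$ after any harmless normalization, and therefore compact in $L^2_{loc}$ by Rellich, so $\phi_n \to \phi^*$ strongly in $L^2_{loc}$ along a further subsequence.

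With this decomposition in hand I split
\begin{equation*}
\int \varphi\, G_n \cdot H_n \;=\; \int \varphi\, G_n \cdot \nabla\phi_n \;+\; \int \varphi\, G_n \cdot R_n .
\end{equation*}
The second integral vanishes in the limit, since $G_n \rightharpoonup 0$ weakly in $L^2$ while $R_n$ converges strongly. For the first, an integration by parts yields
\begin{equation*}
\int \varphi\, G_n \cdot \nabla\phi_n \;=\; -\int \varphi\, \phi_n\, \mathrm{div}\, G_n \;-\; \int \phi_n\, G_n \cdot \nabla\varphi .
\end{equation*}
The last term is again a weak/strong pairing ($\phi_n$ strong, $G_n$ weak) and vanishes. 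The middle term is the duality pairing $\langle \varphi\phi_n,\, \mathrm{div}\, G_n \rangle_{H^1_0,\,H^{-1}}$; since $\varphi\phi_n$ is bounded in $H^1_0$ and $\mathrm{div}\, G_n$ is compact in $H^{-1}$ with weak limit $0$, it converges strongly in $H^{-1}$ to $0$, so the pairing tends to $0$ as well.

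The main obstacle is the strong $L^2_{loc}$-compactness of the divergence-free piece $R_n$: this is the one genuinely nontrivial ingredient, and it is precisely where the divergence-free constraint is essential, as it is what converts the given $H^{-1}$-compactness of $\mathrm{curl}\, R_n$ into $L^2$-compactness of $R_n$ itself (via the boundedness of Riesz-type symbols on the divergence-free subspace). Once that ingredient is in place, every remaining step is a routine weak/strong pairing, and assembling them gives $\int \varphi\, G_n \cdot H_n \to 0$, which is the claimed distributional convergence $G_n \cdot H_n \to \overline{G}\cdot \overline{H}$.
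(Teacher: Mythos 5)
The paper does not actually prove this lemma: it is quoted with citations to Tartar \cite[p.90]{MR2582099} and Dafermos \cite[p.513]{MR2169977} and used as a black box in the proof of Theorem \ref{chap9thm3}, so there is no in-paper argument to compare against. Your proposal supplies the classical Murat--Tartar proof by localization and Helmholtz decomposition, and it is correct in all essentials; it is, however, a genuinely different route from the one in the cited sources, which either work directly on the Fourier side with Plancherel's theorem (splitting frequencies into $|\xi|\le R$ and $|\xi|>R$ and using the div/curl constraints to control the high-frequency part of $\widehat{\varphi G_n}\cdot\overline{\widehat{\varphi H_n}}$) or derive the statement as the special case $Q(\lambda,\mu)=\lambda\cdot\mu$ of the general compensated compactness theorem for quadratic forms vanishing on the characteristic cone $\Lambda$. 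Your decomposition $\psi H_n=\nabla\phi_n+R_n$ buys a more transparent structure: all the genuine compactness is concentrated in the divergence-free part $R_n$, and every remaining step is a weak--strong pairing. One point deserves more care than the phrase ``boundedness of Riesz-type symbols'' suggests: the identity $-\Delta R_n^j=-\sum_i\partial_i\left((\mathrm{curl}\,R_n)_{ij}\right)$ (valid because $\mathrm{div}\,R_n=0$) leads to the multiplier $\xi_i/|\xi|^2$, which does \emph{not} map $H^{-1}(\mathbb{R}^d)$ into $L^2(\mathbb{R}^d)$ because of the singularity at $\xi=0$; the standard repair is to invert $(1-\Delta)$ rather than $-\Delta$, writing $R_n=(1-\Delta)^{-1}R_n+(1-\Delta)^{-1}(-\Delta)R_n$, where the first term is compact in $L^2_{loc}$ by Rellich and the second is handled by the genuinely bounded symbol $\xi_i/(1+|\xi|^2)^{1/2}$ applied to the $H^{-1}$-compact set $\{\mathrm{curl}\,R_n\}$. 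The same low-frequency normalization affects $\phi_n$ (only $\nabla\phi_n$ is controlled a priori), which you do acknowledge. With these routine repairs, and a final subsequence-of-subsequence argument to upgrade the subsequential limits of $R_n$ and $\phi_n$ to convergence of the full sequence $\int\varphi\,G_n\cdot H_n$, the proof is complete.
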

\vspace{0.2cm}
\textbf{Proof of Theorem \ref{chap9thm3}:} A proof of Theorem \ref{chap9thm3} is available on the lines of \cite[p.518]{MR2169977}$\blacksquare$ \\
\vspace{0.2cm}\\
We now show that the sequence $\left(\frac{\partial u^{\varepsilon}}{\partial t}\right)$ lies in a compact set of $H^{-1}_{loc}(\Omega_{T})$. This is used in the extraction of {\it a.e.} convergent subsequence of the quasilinear viscous approximations $\left(u^{\varepsilon}\right)$ for $d=2$.
\begin{theorem}\label{timedervative.thm1}
 {\rm Assume Hypothesis F. Let $\left(u^{\varepsilon}\right)$ be the sequence of solutions to \eqref{regularized.IBVP}. Then 
 $\left(\frac{\partial u^{\varepsilon}}{\partial t}\right)$ is compact in $H^{-1}_{loc}(\Omega_{T})$.
}
\end{theorem}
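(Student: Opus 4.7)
The plan is to apply Murat's compactness Lemma~\ref{chap9lem2} to the decomposition of $\partial_t u^\varepsilon$ provided directly by equation~\eqref{regularized.IBVP.a},
\begin{equation*}
\frac{\partial u^\varepsilon}{\partial t} \;=\; \underbrace{\varepsilon\,\nabla\cdot\!\left(B(u^\varepsilon)\nabla u^\varepsilon\right)}_{\xi_\varepsilon} \;+\; \underbrace{\bigl(-\nabla\cdot f(u^\varepsilon)\bigr)}_{\psi_\varepsilon},
\end{equation*}
and to verify the three hypotheses of that lemma in turn.

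First I would show that $\left(\xi_\varepsilon\right)$ lies in a compact subset of $H^{-1}(\Omega_T)$ by proving strong convergence to $0$. By Hypothesis~F the function $B$ is bounded, and Theorem~\ref{Compactness.lemma.1} gives $\sqrt{\varepsilon}\,\|\nabla u^\varepsilon\|_{\left(L^2(\Omega_T)\right)^d}\leq C$. Hence
\begin{equation*}
\left\|\varepsilon\,B(u^\varepsilon)\nabla u^\varepsilon\right\|_{\left(L^2(\Omega_T)\right)^d} \;\leq\; \|B\|_{L^\infty(\mathbb{R})}\,\sqrt{\varepsilon}\,\bigl(\sqrt{\varepsilon}\,\|\nabla u^\varepsilon\|_{\left(L^2(\Omega_T)\right)^d}\bigr) \;\leq\; C\sqrt{\varepsilon}\;\longrightarrow\;0,
\end{equation*}
so $\xi_\varepsilon$, being the distributional spatial divergence of an $L^2$-null sequence, tends to $0$ in $H^{-1}(\Omega_T)$.

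Next I would show that $\left(\psi_\varepsilon\right)$ is uniformly bounded in $M(\Omega_T)$. This is the heart of the argument and the one place where Hypothesis~F enters in an essential way: combining the initial data bounds of Lemma~\ref{regularized.lem1} with the a priori spatial BV estimate for $\left(u^\varepsilon\right)$ established in \cite{Ramesh} under Hypothesis~F, I obtain that $\left(u^\varepsilon\right)$ is uniformly bounded in $L^\infty\!\left(0,T;BV(\Omega)\right)$. Since $u^\varepsilon(\Omega_T)\subset I=[-A,A]$ by Theorem~\ref{regularized.chap3thm1} and $f\in\left(C^2(\mathbb{R})\right)^2$, the BV chain rule transfers the bound from $u^\varepsilon$ to $f(u^\varepsilon)$, and $\nabla\cdot f(u^\varepsilon)$ is therefore uniformly bounded as a Radon measure on $\Omega_T$.

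Finally I would establish that $\partial_t u^\varepsilon$ itself is uniformly bounded in $W^{-1,p}(\Omega_T)$ for some $p>2$. This is immediate from the maximum principle of Theorem~\ref{regularized.chap3thm1}: for any $\phi\in W^{1,p'}_0(\Omega_T)$ with $\frac{1}{p}+\frac{1}{p'}=1$,
\begin{equation*}
\left|\left\langle \frac{\partial u^\varepsilon}{\partial t},\phi\right\rangle\right| \;=\; \left|\int_{\Omega_T} u^\varepsilon\,\frac{\partial \phi}{\partial t}\,dx\,dt\right| \;\leq\; A\,|\Omega_T|^{1/p}\,\|\phi\|_{W^{1,p'}_0(\Omega_T)},
\end{equation*}
so $\left(\partial_t u^\varepsilon\right)$ is uniformly bounded in $W^{-1,p}(\Omega_T)$ for every finite $p$, in particular for any fixed $p>2$. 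Invoking Lemma~\ref{chap9lem2} with this $W^{-1,p}$ bound together with the decomposition $\partial_t u^\varepsilon=\xi_\varepsilon+\psi_\varepsilon$ then yields the compactness of $\left(\partial_t u^\varepsilon\right)$ in $H^{-1}_{loc}(\Omega_T)$. The main anticipated obstacle is supplying the spatial BV bound on $\left(u^\varepsilon\right)$ under Hypothesis~F; once this is imported from \cite{Ramesh}, the remaining two conditions in Lemma~\ref{chap9lem2} follow routinely from the $L^\infty$ bound and the entropy production estimate already recorded in Section~2.
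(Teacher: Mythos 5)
Your proof is correct, but it is not the route the paper takes. The paper does not decompose $\partial_t u^\varepsilon$ through the PDE at all: it imports from \cite{Ramesh} the estimate $\left\|\frac{\partial u^{\varepsilon}}{\partial t}\right\|_{L^{1}(\Omega_{T})}\leq C_{1}$ (available under Hypothesis F since $u_0\in BV_0(\Omega)\cap L^\infty(\Omega)$), notes that $L^1(\Omega_T)$ embeds continuously into $M(\Omega_T)$ and that the maximum principle gives a uniform $W^{-1,\infty}(\Omega_T)$ bound, and then feeds the \emph{whole} of $\partial_t u^\varepsilon$ into the measure-bounded slot of Murat's Lemma~\ref{chap9lem2}, with the compact part taken to be zero. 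You instead split $\partial_t u^\varepsilon=\xi_\varepsilon+\psi_\varepsilon$ via \eqref{regularized.IBVP.a}, send the viscous term to zero in $H^{-1}(\Omega_T)$ using the energy estimate of Theorem~\ref{Compactness.lemma.1}, and control $-\nabla\cdot f(u^\varepsilon)$ in $M(\Omega_T)$ by the spatial BV bound from \cite{Ramesh}. Both arguments hinge on an a priori estimate imported from \cite{Ramesh}; yours needs only the spatial total-variation bound $\|\nabla u^\varepsilon\|_{\left(L^1(\Omega_T)\right)^d}\leq C$ together with estimates already recorded in Section~2, whereas the paper's needs the temporal $L^1$ bound on $\partial_t u^\varepsilon$, whose derivation in \cite{Ramesh} in turn exploits the control of $\varepsilon\|\Delta u_{0\varepsilon}\|_{L^1(\Omega)}$ built into Lemma~\ref{regularized.lem1}. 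Your version is thus marginally more self-contained relative to what is displayed in this paper, at the cost of verifying two pieces instead of one. Two cosmetic remarks: since $u^\varepsilon$ is a classical solution, the ordinary chain rule $\partial_{x_j}f_j(u^\varepsilon)=f_j'(u^\varepsilon)\,\partial_{x_j}u^\varepsilon$ already yields $\|\nabla\cdot f(u^\varepsilon)\|_{L^1(\Omega_T)}\leq \bigl(\max_{1\leq j\leq d}\|f_j'\|_{L^\infty(I)}\bigr)\|\nabla u^\varepsilon\|_{\left(L^1(\Omega_T)\right)^d}$, so no BV chain rule is required; and in your $W^{-1,p}$ estimate the pairing is $\bigl\langle \partial_t u^\varepsilon,\phi\bigr\rangle=-\int_{\Omega_T}u^\varepsilon\,\partial_t\phi\,dx\,dt$, a sign that is harmless under the absolute value.
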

\begin{proof}
 We prove Theorem \ref{timedervative.thm1} in two steps. In Step-1, we show that the sequence $\left(\frac{\partial u^{\varepsilon}}{\partial t}\right)$
 is bounded in $L^{1}(\Omega_{T})$ and in Step-2, we use Murat's Lemma \ref{chap9lem2} to show that $\left(\frac{\partial u^{\varepsilon}}{\partial t}\right)$
 is compact in $H^{-1}_{loc}(\Omega_{T})$.\\
 \textbf{Step-1:} Let $u_{0}\in BV_{0}\left(\Omega\right)\cap L^{\infty}\left(\Omega\right)$. Applying a result 
 from \cite{Ramesh}, we conclude the existence of a constant $C_{1}> 0$ such that for every $\varepsilon>0$, we have 
\begin{eqnarray}\label{B.BVestimate26}
 \left\|\frac{\partial u^{\varepsilon}}{\partial t}\right\|_{L^{1}(\Omega_{T})}\leq C_{1}.
\end{eqnarray}
 \textbf{Step 2:} Since $L^{1}(\Omega_{T})$ is continuously imbedded in the space of measures $M(\Omega_{T})$. Therefore we have
 \begin{eqnarray}\label{rsb21.eqn109}
  \left\|\frac{\partial u^{\varepsilon}}{\partial t}\right\|_{M(\Omega_{T})}&\leq& \left\|\frac{\partial u^{\varepsilon}}{\partial t}\right\|_{L^{1}(\Omega_{T})}
 \end{eqnarray}
In view of \eqref{B.BVestimate26}, we see that $\left(\frac{\partial u^{\varepsilon}}{\partial t}\right)$ is bounded in the space of
measures $M(\Omega_{T})$.
The sequence $\left(\frac{\partial u^{\varepsilon}}{\partial t}\right)$ is bounded in $W^{-1,\infty}(\Omega_{T})$ as 
$\|u^{\varepsilon}\|_{L^{\infty}(\Omega_{T})}\leq A$. An application of Murat's Lemma \ref{chap9lem2},
 we get that the sequence $\left(\frac{\partial u^{\varepsilon}}{\partial t}\right)$ is compact in $H^{-1}(\Omega_{T})$ $\blacksquare$
\end{proof}\\
\vspace{0.1cm}\\
We need to prove the following result for extraction of {\it a.e.} convergent subsequence of quasilinear viscous approximations
$\left(u^{\varepsilon}\right)$ to generalized viscosity problem \eqref{regularized.IBVP} for $d=2$. 
\begin{theorem}\label{compactness12.thm2}
 {\rm Assume \textbf{Hypothesis F} and let $\left(u^{\varepsilon}\right)$ be the sequence of solutions to 
 viscosity problem \eqref{regularized.IBVP}. Then there exists a subsequence $\left(u^{\varepsilon_{k}}\right)$ of 
 $\left(u^{\varepsilon}\right)$ and a function $u$ such that for {\it a.e.} $(x,t)\in\Omega_{T}$ the following convergence
 \begin{equation}\label{compactness12.eqn19}
  u^{\varepsilon_{k}}(x,t)\to u(x,t)\,\,\,\mbox{as}\,\,\,k\to\infty.
 \end{equation}
 holds.}
\end{theorem}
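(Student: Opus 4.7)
The plan is to apply the compensated compactness framework in space-time $\Omega_{T}\subset\R^{3}$: extract a Young measure along a subsequence, derive Tartar-type commutation relations on it via the div-curl lemma, and then exploit the linear independence of $f_{1}'$ and $f_{2}'$ from Hypothesis F(3) to force the Young measure to be a Dirac mass almost everywhere. By Theorem \ref{regularized.chap3thm1}, $(u^{\varepsilon})$ is uniformly bounded in $L^{\infty}(\Omega_{T})$ with values in $I=[-A,A]$; Theorem \ref{chap9thm1} then yields a subsequence $(u^{\varepsilon_{k}})$ and a measurable family of probability measures $\nu_{(x,t)}$ supported in $I$ such that $F(u^{\varepsilon_{k}})\rightharpoonup\langle\nu_{(x,t)},F\rangle$ in $L^{\infty}(\Omega_{T})$-weak$^{*}$ for every continuous $F$.

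Next, I would derive the Tartar commutation relations. For any two $C^{2}$ entropy--flux triples $(\eta_{i},q_{i,1},q_{i,2})$, $i=1,2$, Theorem \ref{chap9thm2} places each space-time divergence $\partial_{t}\eta_{i}(u^{\varepsilon_{k}})+\partial_{x_{1}}q_{i,1}(u^{\varepsilon_{k}})+\partial_{x_{2}}q_{i,2}(u^{\varepsilon_{k}})$ in a compact subset of $H^{-1}_{\mathrm{loc}}(\Omega_{T})$. Moreover, Theorem \ref{timedervative.thm1} gives compactness of $\partial_{t}u^{\varepsilon_{k}}$ in $H^{-1}_{\mathrm{loc}}$; since $u^{\varepsilon_{k}}$ is classical, the chain-rule identity $\partial_{t}\eta_{i}(u^{\varepsilon_{k}})=\eta_{i}'(u^{\varepsilon_{k}})\partial_{t}u^{\varepsilon_{k}}$ combined with the $L^{1}$ bound on $\partial_{t}u^{\varepsilon_{k}}$ and Murat's Lemma \ref{chap9lem2} yields that $\partial_{t}\eta_{i}(u^{\varepsilon_{k}})$, and hence also the purely spatial divergence $\partial_{x_{1}}q_{i,1}(u^{\varepsilon_{k}})+\partial_{x_{2}}q_{i,2}(u^{\varepsilon_{k}})$, is compact in $H^{-1}_{\mathrm{loc}}(\Omega_{T})$. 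One then assembles two 3-vector fields built from the two entropy--flux triples whose scalar product reproduces the commutator $\eta_{1}q_{2,j}-\eta_{2}q_{1,j}$, arranged so that one has compact divergence and the other compact curl in $\R^{3}$, and applies the div-curl lemma \ref{chap9lem1} to obtain
\begin{equation*}
\langle\nu,\eta_{1}q_{2,j}-\eta_{2}q_{1,j}\rangle=\langle\nu,\eta_{1}\rangle\langle\nu,q_{2,j}\rangle-\langle\nu,\eta_{2}\rangle\langle\nu,q_{1,j}\rangle,\quad j=1,2.
\end{equation*}

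Finally, I would specialize these commutation identities with $\eta_{1}(\lambda)=\lambda$ (so $q_{1,j}=f_{j}$) and $\eta_{2}$ arbitrary, which turns them into covariance-type constraints under $\nu_{(x,t)}$. The non-degeneracy of Hypothesis F(3) --- that no non-trivial combination $\xi_{1}f_{1}'+\xi_{2}f_{2}'$ vanishes identically on $\R$ --- then forces $\mathrm{supp}\,\nu_{(x,t)}$ to be a single point almost everywhere, so $\nu_{(x,t)}=\delta_{u(x,t)}$ with $u(x,t):=\langle\nu_{(x,t)},\lambda\rangle$, giving $u^{\varepsilon_{k}}\to u$ almost everywhere on $\Omega_{T}$. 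The main obstacle is the div-curl step: the entropy-production estimate of Theorem \ref{chap9thm2} controls only the full space-time divergence, whereas the div-curl lemma in $\R^{3}$ additionally needs compact curl on the second vector field. The time-derivative compactness of Theorem \ref{timedervative.thm1} is precisely what bridges this gap by separating the entropy production into purely spatial divergence information, enabling a workable 3-vector construction; the subsequent Dirac-mass reduction via Hypothesis F(3) is then a classical but essential consequence of the commutation identities.
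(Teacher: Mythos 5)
Your skeleton (uniform $L^{\infty}$ bound, Young measure via Theorem \ref{chap9thm1}, a div--curl identity on the Young measure, then the non-degeneracy of Hypothesis F(3) to collapse $\nu_{(x,t)}$ to a Dirac mass) matches the paper's, and you correctly identify the key obstruction: only full space-time divergences of entropy pairs are controlled by Theorem \ref{chap9thm2}, and the $L^{1}$ bound on $\partial_{t}u^{\varepsilon}$ from Theorem \ref{timedervative.thm1} (plus Murat's Lemma \ref{chap9lem2}) is what lets one split off purely spatial divergence information. The gap is in the step you lean on most: the claimed Tartar commutation relations $\langle\nu,\eta_{1}q_{2,j}-\eta_{2}q_{1,j}\rangle=\langle\nu,\eta_{1}\rangle\langle\nu,q_{2,j}\rangle-\langle\nu,\eta_{2}\rangle\langle\nu,q_{1,j}\rangle$ are not obtainable from the available estimates. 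To produce the product $\eta_{1}q_{2,j}$ via Lemma \ref{chap9lem1} you must pair the time component of $G_{n}=(\eta_{1}(u^{\varepsilon}),q_{1,1}(u^{\varepsilon}),q_{1,2}(u^{\varepsilon}))$ against a spatial component of $H_{n}$, and the curl of any $H_{n}=(h_{0}(u^{\varepsilon}),h_{1}(u^{\varepsilon}),h_{2}(u^{\varepsilon}))$ contains the components $\partial_{x_{j}}h_{0}(u^{\varepsilon})-\partial_{t}h_{j}(u^{\varepsilon})$; making these $H^{-1}_{loc}$-compact forces control of a single spatial derivative $h_{0}'(u^{\varepsilon})\partial_{x_{j}}u^{\varepsilon}$, which is exactly the spatial BV information on $u^{\varepsilon}$ that this argument (deliberately, per the introduction) does not use. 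The only curl-compact fields available have constant time component, so the only quadratic interactions you can close are flux--flux determinants $Q_{1,1}Q_{2,2}-Q_{1,2}Q_{2,1}$; the entropies themselves never survive into the product.

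This is precisely why the paper's proof takes the Tadmor--Rascle--Bagnerini route: it multiplies \eqref{regularized.IBVP.a} by $f_{1}'$ and $f_{2}'$ to produce the two purely spatial divergence-form relations \eqref{compactness12.eqn22} among $F_{11},F_{12},F_{22}$ (which are the spatial entropy fluxes of the particular entropies $\eta=f_{1}$ and $\eta=f_{2}$), shows the right-hand sides are $H^{-1}$-compact using the $\sqrt{\varepsilon}\,\nabla u^{\varepsilon}$ bound and Theorem \ref{timedervative.thm1}, and applies the quadratic theorem of compensated compactness to the single determinant quadratic, yielding only $\langle\nu_{x,t},(F_{11}-\overline{F}_{11})(F_{22}-\overline{F}_{22})-(F_{12}-\overline{F}_{12})^{2}\rangle=0$ as in \eqref{comp.Theorem.eqn4}. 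The reduction to a Dirac mass is then the Cauchy--Schwarz equality-case argument of \cite{Tadmor}, where linear independence of $f_{1}'$ and $f_{2}'$ rules out equality on a non-degenerate support; it is not the one-dimensional specialization $\eta_{1}(\lambda)=\lambda$ you propose, and your final step is asserted rather than proved. To repair your argument you would need to restrict your ``commutation relations'' to the flux determinant built from the two specific entropies $f_{1},f_{2}$ and then supply the Cauchy--Schwarz argument, at which point it becomes the paper's proof.
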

\begin{proof}
A proof of the extraction of almost everywhere convergent subsequence of $\left(u^{\varepsilon}\right)$ is available  in \cite{Tadmor} whenever $B\equiv 1$. We use similar arguments to prove Theorem \ref{compactness12.thm2}. 	Multiplying \eqref{regularized.IBVP.a} by $f_{1}^{'}$, $f_{2}^{'}$, we get
\begin{eqnarray}\label{compactness12.eqn20}
 \left(f_{1}^{'}(u^{\varepsilon})\right)^{2}\,\frac{\partial u^{\varepsilon}}{\partial x_{1}} + f_{1}^{'}(u^{\varepsilon})f_{2}^{'}(u^{\varepsilon})\,\frac{\partial u^{\varepsilon}}{\partial x_{2}}&=& \varepsilon\displaystyle\sum_{j=1}^{2}f_{1}^{'}(u^{\varepsilon})\frac{\partial}{\partial x_{j}}\left(B(u^{\varepsilon})\,\frac{\partial u^{\varepsilon}}{\partial x_{j}}\right)-\frac{\partial f_{1}(u^{\varepsilon})}{\partial t},\nonumber\\
 f_{1}^{'}(u^{\varepsilon})f_{2}^{'}(u^{\varepsilon})\,\frac{\partial u^{\varepsilon}}{\partial x_{1}} + \left(f_{2}^{'}(u^{\varepsilon})\right)^{2}\,\frac{\partial u^{\varepsilon}}{\partial x_{2}} &=& \varepsilon\displaystyle\sum_{j=1}^{2}f_{2}^{'}(u^{\varepsilon})\frac{\partial}{\partial x_{j}}\left(B(u^{\varepsilon})\,\frac{\partial u^{\varepsilon}}{\partial x_{j}}\right)-\frac{\partial f_{2}(u^{\varepsilon})}{\partial t}.
\end{eqnarray}
Denote
\begin{eqnarray}\label{compactness12.eqn21}
 F_{11}(\lambda) :=\int_{0}^{\lambda}\left(f_{1}^{'}(s)\right)^{2}\,ds,\nonumber\\
 F_{12}(\lambda) :=\int_{0}^{\lambda}f_{1}^{'}(s)f_{2}^{'}(s)\,ds,\nonumber\\
 F_{22}(\lambda) :=\int_{0}^{\lambda}\left(f_{2}^{'}(s)\right)^{2}\,ds.
\end{eqnarray}
Equation \eqref{compactness12.eqn20} can be rewritten as 
\begin{eqnarray}\label{compactness12.eqn22}
 \frac{\partial F_{11}(u^{\varepsilon})}{\partial x_{1}} + \frac{\partial F_{12}(u^{\varepsilon})}{\partial x_{2}} &=& \varepsilon\displaystyle\sum_{j=1}^{2}\frac{\partial}{\partial x_{j}}\left(B(u^{\varepsilon})\,\frac{\partial f_{1} (u^{\varepsilon})}{\partial x_{j}}\right)-\varepsilon\displaystyle\sum_{j=1}^{2}B(u^{\varepsilon})\left(\frac{\partial u^{\varepsilon}}{\partial x_{j}}\right)^{2}f_{1}^{''}(u^{\varepsilon})-\frac{\partial f_{1}(u^{\varepsilon})}{\partial t},\nonumber\\
 \frac{\partial F_{12}(u^{\varepsilon})}{\partial x_{1}} + \frac{\partial F_{22}(u^{\varepsilon})}{\partial x_{2}} &=& \varepsilon\displaystyle\sum_{j=1}^{2}\frac{\partial}{\partial x_{j}}\left(B(u^{\varepsilon})\,\frac{\partial f_{2}(u^{\varepsilon}) }{\partial x_{j}}\right)-\varepsilon\displaystyle\sum_{j=1}^{2}B(u^{\varepsilon})\left(\frac{\partial u^{\varepsilon}}{\partial x_{j}}\right)^{2}f_{2}^{''}(u^{\varepsilon})-\frac{\partial f_{2}(u^{\varepsilon})}{\partial t}.\nonumber\\
 &
\end{eqnarray}
We now show that RHS of two equations of \eqref{compactness12.eqn22} lie in a compact set of $H^{-1}(\Omega_{T})$. For that we show that for $i=1,2$,
\begin{enumerate}
 \item[(i).]
 \begin{equation}\label{compactness12.eqn23}
  \varepsilon\displaystyle\sum_{j=1}^{2}\frac{\partial}{\partial x_{j}}\left(B(u^{\varepsilon})\,\frac{\partial f_{i} (u^{\varepsilon})}{\partial x_{j}}\right)\to 0\,\,\mbox{in}\,\,H^{-1}(\Omega_{T}),
 \end{equation}
\item[(ii).] 
\begin{equation}\label{compactness12.eqn24}
 -\varepsilon\displaystyle\sum_{j=1}^{2}B(u^{\varepsilon})\left(\frac{\partial u^{\varepsilon}}{\partial x_{j}}\right)^{2}f_{1}^{''}(u^{\varepsilon})-\frac{\partial f_{1}(u^{\varepsilon})}{\partial t}\,\,\mbox{is bounded in the space of measure}\,\,M(\Omega_{T}).
\end{equation}
\end{enumerate}
Firstly, we prove \eqref{compactness12.eqn23}. Observe that 
\begin{eqnarray}\label{compactness12.eqn25}
 \Big\|\varepsilon\displaystyle\sum_{j=1}^{2}\frac{\partial}{\partial x_{j}}\left(B(u^{\varepsilon})\,\frac{\partial f_{i} (u^{\varepsilon})}{\partial x_{j}}\right)\Big\|_{H^{-1}(\Omega_{T})} &=& \sup\Big\{\Big|\int_{0}^{T}\int_{\Omega}\left(\varepsilon\displaystyle\sum_{j=1}^{2}\frac{\partial}{\partial x_{j}}\left(B(u^{\varepsilon})\,\frac{\partial f_{i} (u^{\varepsilon})}{\partial x_{j}}\right)\right)\,\nonumber\\ &&\phi(x,t)\,dx\,dt\Big|\,; \|\phi\|_{H^{1}_{0}(\Omega_{T})}\leq 1\Big\},\nonumber\\
 &=& \sup\Big\{\Big|-\int_{0}^{T}\int_{\Omega}\left(\varepsilon\displaystyle\sum_{j=1}^{2}\left(B(u^{\varepsilon})\,f^{'}_{i} (u^{\varepsilon})\frac{\partial u^{\varepsilon}}{\partial x_{j}}\right)\right)\,\nonumber\\ &&\frac{\partial\phi}{\partial x_{j}}(x,t)\,dx\,dt\Big|\,; \|\phi\|_{H^{1}_{0}(\Omega_{T})}\leq 1\Big\},\nonumber\\
 &\leq& \varepsilon \|B\|_{L^{\infty}(I)}\,\left(\displaystyle\max_{1\leq i\leq 2}\|f_{i}^{'}\|_{L^{\infty}(I)}\right)\,\|\nabla u^{\varepsilon}\|_{\left(L^{2}(\Omega_{T})\right)^{2}}.\nonumber\\
 &
\end{eqnarray}
Since $\sqrt{\varepsilon}\|\nabla u^{\varepsilon}\|_{\left(L^{2}(\Omega_{T})\right)^{2}}\leq C,$ which is independent of $\varepsilon$, therefore we have \eqref{compactness12.eqn23} and $\varepsilon\displaystyle\sum_{j=1}^{2}\frac{\partial}{\partial x_{j}}\left(B(u^{\varepsilon})\,\frac{\partial f_{i} (u^{\varepsilon})}{\partial x_{j}}\right)$ lie in a compact set of $H^{-1}(\Omega_{T})$.

Secondly, we show \eqref{compactness12.eqn24}. We know that $L^{1}(\Omega_{T})$ is continuously imbeeded in $\left(L^{\infty}(\Omega_{T})\right)^{\ast}$. Therefore we have 
\begin{eqnarray}\label{compactness12.eqn26}
 \Big\| -\varepsilon\displaystyle\sum_{j=1}^{2}B(u^{\varepsilon})\left(\frac{\partial u^{\varepsilon}}{\partial x_{j}}\right)^{2}f_{1}^{''}(u^{\varepsilon})-\frac{\partial f_{1}(u^{\varepsilon})}{\partial t}\Big\|_{M(\Omega_{T})} \leq 
 \int_{0}^{T}\int_{\Omega}\Big|\varepsilon\displaystyle\sum_{j=1}^{2}B(u^{\varepsilon})\left(\frac{\partial u^{\varepsilon}}{\partial x_{j}}\right)^{2}f_{1}^{''}(u^{\varepsilon})\nonumber\\-f_{1}^{'}(u^{\varepsilon})\frac{\partial u^{\varepsilon}}{\partial t}\Big|\,dx\,dt,\nonumber\\
 \leq \|B\|_{L^{\infty}(I)}\,\displaystyle\max_{1\leq i\leq 2}\left(\displaystyle\sup_{y\in I}\Big|f_{i}^{''}(y)\Big|\right)\left(\sqrt{\varepsilon}\,\|\nabla u^{\varepsilon}\|_{\left(L^{2}(\Omega_{T})\right)^{2}}\right)^{2} 
 + \displaystyle\max_{1\leq i\leq 2}\left(\displaystyle\sup_{y\in I}\Big|f_{i}^{''}(y)\Big|\right)\,\Big\|\frac{\partial u^{\varepsilon}}{\partial t}\Big\|_{L^{1}(\Omega_{T})}.
\end{eqnarray}
Applying Theorem \ref{timedervative.thm1} and $\sqrt{\varepsilon}\|\nabla u^{\varepsilon}\|_{\left(L^{2}(\Omega_{T})\right)^{2}}\leq C,$ which is independent of $\varepsilon$, we get \eqref{compactness12.eqn24}.\\
We want to use Theorem of Compensated Compactness \cite[p.31]{Dacorogna} to conclude the almost everywhere convergence of 
$\left(u^{\varepsilon}\right)$ to a function $u$ in $L^{\infty}(\Omega_{T})$.
Observe that 
\begin{eqnarray}\label{Comp.Theorem.eqn1}
 F_{11}(u^{\varepsilon})&\rightharpoonup& \overline{F}_{11}\,\,\mbox{in}\,\,\, L^{2}(\Omega_{T})\,\,\mbox{as}\,\,\varepsilon\to 0,\nonumber\\
 F_{12}(u^{\varepsilon})&\rightharpoonup& \overline{F}_{12}\,\,\mbox{in}\,\,\, L^{2}(\Omega_{T})\mbox{as}\,\,\varepsilon\to 0\,\,,\nonumber\\
 F_{22}(u^{\varepsilon})&\rightharpoonup& \overline{F}_{22}\,\,\mbox{in}\,\,\, L^{2}(\Omega_{T})\mbox{as}\,\,\varepsilon\to 0\,\,.
\end{eqnarray}
Therefore, we obtain
$$\left(F_{11}(u^{\varepsilon}), F_{12}(u^{\varepsilon}), F_{12}(u^{\varepsilon}), F_{22}(u^{\varepsilon}) \right)\rightharpoonup \left(\overline{F}_{11},\overline{F}_{12},\overline{F}_{12}, \overline{F}_{22} \right)\,\,\mbox{as}\,\,\varepsilon\to 0.$$
The following combinations 
\begin{eqnarray}\label{comp.Theorem.eqn2}
 \frac{\partial}{\partial x_{1}}F_{11}(u^{\varepsilon}) + \frac{\partial}{\partial x_{2}}F_{12}(u^{\varepsilon})\,\, &,& \frac{\partial}{\partial x_{1}}F_{12}(u^{\varepsilon}) + \frac{\partial}{\partial x_{2}}F_{22}(u^{\varepsilon}),\nonumber\\
 \frac{\partial}{\partial t}F_{11}(u^{\varepsilon}) &,& \frac{\partial}{\partial t}F_{22}(u^{\varepsilon}),
\end{eqnarray}
are compact in $H^{-1}(\Omega_{T})$.\\
Consider the set 
$$\nu :=\left\{\left(\lambda,\xi\right)\in \mathbb{R}^{4}\times\mathbb{R}^{3}\setminus\left\{0\right\}\,\,;\,\,\lambda_{1}\xi_{1}+\lambda_{2}\xi_{2}=0,\,\,
\lambda_{1}\xi_{1}+\lambda_{2}\xi_{2}=0,\,\,\lambda_{1}\xi_{0}=0,\,\,\lambda_{3}\xi_{0}=0\right\}.$$
The quadratics $Q(F_{11}(u^{\varepsilon}), F_{12}(u^{\varepsilon}), F_{12}(u^{\varepsilon}), F_{22}(u^{\varepsilon}))$ which vanish 
on the projections,
$$\Lambda=\left\{\lambda\in\mathbb{R}^{4}\,\,:\,\,\left(\lambda,\xi\right)\in\nu\right\},$$
are 
$$\left\{\lambda\in\mathbb{R}^{4}\,\,:\,\,\lambda_{1}\lambda_{4}-\lambda_{2}\lambda_{3}=0\right\}.$$
As a result, we have 
\begin{eqnarray}\label{comp.Theorem.eqn3}
 \left(F_{11}(u^{\varepsilon}),\,F_{12}(u^{\varepsilon})\right)\cdot\left(F_{22}(u^{\varepsilon}),\,F_{12}(u^{\varepsilon})\right)\rightharpoonup \overline{F}_{11}\overline{F}_{22}
 -\overline{F}_{12}^{2}\,\,\mbox{in}\,\,L^{2}(\Omega_{T})\,\,\mbox{as}\,\,\varepsilon\to 0.
\end{eqnarray}
If we write \eqref{comp.Theorem.eqn3} in term of youngs measures $\nu_{x,t}(\cdot)$, we get
\begin{eqnarray}\label{comp.Theorem.eqn4}
 \Big\langle \nu_{x,t}, \left(F_{11}(\lambda)-\overline{F}_{11}\right)\cdot\left(F_{22}(\lambda)-\overline{F}_{22}\right)-\left(F_{12}(\lambda)-\overline{F}_{12}\right)^{2}\Big\rangle=0.
\end{eqnarray}
 A proof of extraction of a subsequence of $\left(u^{\varepsilon}\right)$ converges {\it a.e.} to a function $u$ in $L^{\infty}(\Omega_{T})$ follows from \cite[p.702-p.703]{Tadmor}. This completes the proof\,$\blacksquare$
\end{proof}
\section{Kinetic Formulation-the multidimensional case}\label{kineticformulation.generalizedviscosityproblem}
We write the proof of the extraction of {\it a.e.} convergent subsequence of the quasiliear viscous approximations $\left(u^{\varepsilon}\right)$ which are unique solutions of \eqref{regularized.IBVP}. We use Velocity averaging lemma to extract the {\it a.e.} convergent subsequence. In order to apply velocity averaging lemma, we need to derive the Kinetic formulation of \eqref{regularized.IBVP.a}. We follow the computation from the lines of \cite[p.706]{Tadmor} to derive the kinetic formulation. We now introduce the sign function and the Kruzhkov entropy-entropy flux pairs which is used in the derivation of Kinetic formulation of \eqref{regularized.IBVP.a}. Denote 
$$\mbox{sg}(s):=\begin{cases}
1\,\,\,\mbox{if}\,\,\,s>0\\
0\,\,\,\mbox{if}\,\,\,s=0\\
-1\,\,\,\mbox{if}\,\,\,s<0
\end{cases}$$

For $c\in\mathbb{R}$, the family of Kruzhkov entropy and entropy fluxes are $\eta(u;c):=\left|u-c\right|$ and $j\in\left\{1,2,\cdots,d\right\}$, $q_{j}(u;c)=\mbox{sg}\left(u-c\right)\left(f_{j}(u)-f_{j}(c)\right)$. 
\subsection{Kinetic Equations of Generalized Viscosity Problem}\label{Kineticformulationviscosityproblem}
\begin{proposition}\label{PropositionKineticEquationLions1}
The kinteic equation of the generalized viscosity problem
\eqref{regularized.IBVP.a} is given by 
\begin{equation}\label{KineticFormulation.Equation612}
\frac{\partial \chi^{\varepsilon}}{\partial t} +\displaystyle\sum_{j=1}^{d}f_{j}^{\prime}(c)\,\frac{\partial\chi^{\varepsilon}}{\partial x_{j}} = \frac{\partial}{\partial c}\left(\frac{\varepsilon}{2}\,\displaystyle\sum_{j=1}^{d}\eta^{\prime}(u^{\varepsilon};c)\frac{\partial}{\partial x_{j}}\left(B(u^{\varepsilon})\frac{\partial u^{\varepsilon}}{\partial x_{j}}\right)\right)
\end{equation}
in  $\mathcal{D}^{\prime}\left(\Omega\times\mathbb{R}\times(0,T)\right)$	
\end{proposition}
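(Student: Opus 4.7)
The plan is to derive the kinetic equation by applying the Kruzhkov-type entropy equation for the viscosity problem and then differentiating in the velocity variable $c$. Because $u^\varepsilon \in C^{2+\beta,\frac{2+\beta}{2}}(\overline{\Omega_T})$ by Theorem~\ref{ExistenceofClassicalLadyzenskajap452}, all manipulations in $(x,t)$ are classical; only the $c$-differentiation needs to be read in $\mathcal{D}'\left(\Omega\times\mathbb{R}\times(0,T)\right)$. Throughout I will use the standard identification $\chi^\varepsilon(x,t,c) = \tfrac{1}{2}\bigl(\mathrm{sg}(u^\varepsilon(x,t)-c)+\mathrm{sg}(c)\bigr)$, so that $-\tfrac{1}{2}\partial_c |u^\varepsilon - c| = \tfrac{1}{2}\mathrm{sg}(u^\varepsilon-c) = \chi^\varepsilon -\tfrac12\mathrm{sg}(c)$, and the $\mathrm{sg}(c)$ contribution will be inert under $\partial_t$ and $\partial_{x_j}$.

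First I would multiply the PDE \eqref{ibvp.parab.a} by $\mathrm{sg}(u^\varepsilon-c)$ and use the classical chain rule, noting that the Kruzhkov flux $q_j(u;c) = \mathrm{sg}(u-c)\bigl(f_j(u)-f_j(c)\bigr)$ satisfies $\partial_u q_j = \mathrm{sg}(u-c)\,f_j'(u)$ (the jump in the sign function is annihilated by the factor $f_j(u)-f_j(c)$ at $u=c$). This yields the pointwise identity
\begin{equation*}
\partial_t |u^\varepsilon - c| + \sum_{j=1}^d \partial_{x_j} q_j(u^\varepsilon;c) \;=\; \varepsilon \sum_{j=1}^d \mathrm{sg}(u^\varepsilon - c)\,\partial_{x_j}\!\bigl(B(u^\varepsilon)\partial_{x_j} u^\varepsilon\bigr),
\end{equation*}
valid in $\mathcal{D}'\bigl(\Omega\times\mathbb{R}\times(0,T)\bigr)$ after integrating against test functions $\varphi(x,t,c)$.

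Next I would apply $-\tfrac12 \partial_c$ to both sides in the sense of distributions. On the left, $-\tfrac12 \partial_t \partial_c |u^\varepsilon-c| = \partial_t \chi^\varepsilon$. For the flux term, the key distributional identity is $\partial_c \mathrm{sg}(u^\varepsilon - c) = -2\,\delta_{u^\varepsilon}(c)$, so
\begin{equation*}
-\tfrac12\partial_c q_j(u^\varepsilon;c) \;=\; \delta_{u^\varepsilon}(c)\bigl(f_j(u^\varepsilon)-f_j(c)\bigr) + \tfrac12 f_j'(c)\,\mathrm{sg}(u^\varepsilon-c);
\end{equation*}
the first term vanishes because the test function pairing of $\delta_{u^\varepsilon}(c)$ with the continuous factor $f_j(u^\varepsilon)-f_j(c)$ forces evaluation at $c=u^\varepsilon$. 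Since $f_j'(c)$ is independent of $x$, taking $\partial_{x_j}$ converts this into $f_j'(c)\,\partial_{x_j} \chi^\varepsilon$, producing the transport part of \eqref{KineticFormulation.Equation612}.

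Finally, on the right-hand side, $-\tfrac12 \partial_c$ of $\varepsilon\,\mathrm{sg}(u^\varepsilon-c)\,\partial_{x_j}(B(u^\varepsilon)\partial_{x_j}u^\varepsilon)$ is exactly $\partial_c\bigl(\tfrac{\varepsilon}{2}\eta'(u^\varepsilon;c)\,\partial_{x_j}(B(u^\varepsilon)\partial_{x_j}u^\varepsilon)\bigr)$ once we identify $\eta'(u^\varepsilon;c) = \partial_c|u^\varepsilon - c| = -\mathrm{sg}(u^\varepsilon-c)$, so the two minus signs absorb. The main technical obstacle is justifying the distributional computation $\delta_{u^\varepsilon}(c)\bigl(f_j(u^\varepsilon)-f_j(c)\bigr)=0$ rigorously; I would handle it either by pairing explicitly with a test function $\varphi$ and using that $c \mapsto f_j(u^\varepsilon(x,t))-f_j(c)$ is $C^1$ and vanishes at $c = u^\varepsilon(x,t)$, or by approximating $|u^\varepsilon - c|$ by a smooth convex entropy $\eta_\delta(u;c)$ with $\eta_\delta'(u;c)\to \mathrm{sg}(u-c)$ and $\eta_\delta(u;c)\to |u-c|$ boundedly, performing the calculation there, and passing to the limit $\delta\to 0$ in $\mathcal{D}'$.
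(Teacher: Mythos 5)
Your overall strategy is the same as the paper's: derive the Kruzhkov entropy identity for \eqref{ibvp.parab.a} (the paper does this by regularizing $|\cdot-c|$ with smooth convex approximations $G_n$ and passing to the limit, which is the second of the two justifications you offer at the end), then differentiate in $c$ in $\mathcal{D}^{\prime}$, observing that the Dirac terms $\delta_{c=u^{\varepsilon}}\left(f_j(u^{\varepsilon})-f_j(c)\right)$ vanish because the factor multiplying the delta is continuous and vanishes at the concentration point. That part of your argument is sound and matches the paper step for step.

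There is, however, a sign-convention clash that you paper over rather than resolve. The paper's kinetic function is $\chi_{u^{\varepsilon}}(c)=+1$ on $\{u^{\varepsilon}<c<0\}$ and $-1$ on $\{0<c<u^{\varepsilon}\}$ (so $\int\chi_{u^{\varepsilon}}\,dc=-u^{\varepsilon}$), i.e. $\chi^{\varepsilon}=-\tfrac12\left(\mbox{sg}(u^{\varepsilon}-c)+\mbox{sg}(c)\right)$, the \emph{negative} of the function you use. Moreover, throughout the paper (in the entropy identity \eqref{KineticFormulation.Equation2}, in the definition of $m^{\varepsilon}$, and in \eqref{KineticFormulation.Equation613}) the symbol $\eta^{\prime}(u^{\varepsilon};c)$ denotes the derivative in the \emph{first} argument, $\partial_u|u-c|=\mbox{sg}(u^{\varepsilon}-c)$: it is the multiplier produced by the chain rule in $u$. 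With that meaning of $\eta^{\prime}$ and with your $\chi^{\varepsilon}=\tfrac12\left(\mbox{sg}(u^{\varepsilon}-c)+\mbox{sg}(c)\right)$, applying $-\tfrac12\partial_c$ to the entropy identity yields
\begin{equation*}
\frac{\partial\chi^{\varepsilon}}{\partial t}+\sum_{j=1}^{d}f_j^{\prime}(c)\frac{\partial\chi^{\varepsilon}}{\partial x_j}=-\frac{\partial}{\partial c}\left(\frac{\varepsilon}{2}\sum_{j=1}^{d}\eta^{\prime}(u^{\varepsilon};c)\frac{\partial}{\partial x_j}\left(B(u^{\varepsilon})\frac{\partial u^{\varepsilon}}{\partial x_j}\right)\right),
\end{equation*}
i.e. the wrong sign on the right of \eqref{KineticFormulation.Equation612}. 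You absorb this minus sign by declaring $\eta^{\prime}(u^{\varepsilon};c)=\partial_c|u^{\varepsilon}-c|=-\mbox{sg}(u^{\varepsilon}-c)$, but that is inconsistent with the $\eta^{\prime}$ appearing in your own entropy identity and with the way $m^{\varepsilon}$ is handled downstream (e.g. in Proposition \ref{MeasureBounded.1}, where $\eta^{\prime}(u^{\varepsilon};c)$ is replaced by $\mbox{sg}(u^{\varepsilon}-c)$). The fix is simply to adopt the paper's convention for $\chi^{\varepsilon}$: replacing your $\chi^{\varepsilon}$ by its negative flips the left-hand side and restores \eqref{KineticFormulation.Equation612} exactly, with $\eta^{\prime}=\mbox{sg}(u^{\varepsilon}-c)$. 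This is a bookkeeping error rather than a gap in the method, but as written your argument establishes the equation for $-\chi^{\varepsilon}$ rather than for the $\chi^{\varepsilon}$ the paper subsequently uses.
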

\begin{proof}
Let $G:\mathbb{R}\to\mathbb{R}$ be a $C^{\infty}-$ function such that 
$$G(x):=\left|x\right|\,\,\mbox{for}\,\,\left|x\right|\geq 1,\,\,G^{\prime\prime}\geq 0.$$
For $n\in\mathbb{N}$, denote 
\begin{equation*}
G_{n}(x):=\frac{1}{n}\,G\left(n\,\left(x-c\right)\right).$$
Then $G_{n}(x)\to \left|x-c\right|$ as $n\to\infty$. We now compute 
$$G^{\prime}_{n}(x):=\begin{cases}
\frac{d}{dx}\left(\left|x-c\right|\right),\,\,\mbox{if}\,\,\left|x-c\right|\geq\frac{1}{n},\\
G^{\prime}\left(n\left(x-c\right)\right),\,\mbox{if}\,\left|x-c\right|<\frac{1}{n}.
\end{cases}
\end{equation*}
Then we have
\begin{equation*}
G^{\prime}_{n}(x)=
\begin{cases}
1\,\,\mbox{if}\,\,x\geq c\,\,\left|x-c\right|\geq\frac{1}{n},\\
-1\,\,\mbox{if}\,\,x < c\,\,\left|x-c\right|\geq\frac{1}{n},\\
G^{\prime}\left(n\left(x-c\right)\right),\,\mbox{if}\,\left|x-c\right|<\frac{1}{n}.
\end{cases}
\end{equation*}
Therefore 
$$G_{n}^{\prime}(x)\to \mbox{sg}(x-c)\,\,\mbox{as}\,\,n\to\infty.$$
Multiplying \eqref{regularized.IBVP.a} by $G^{\prime}_{n}(x)$ to get 
\begin{equation*}
G^{\prime}_{n}\left(u^{\varepsilon}\right)\,\frac{\partial u^{\varepsilon}}{\partial t} + \displaystyle\sum_{j=1}^{d}\,G^{\prime}_{n}\left(u^{\varepsilon}\right)\,\frac{\partial}{\partial x_{j}}f_{j}\left(u^{\varepsilon}\right)=\varepsilon\,G^{\prime}_{n}\left(u^{\varepsilon}\right)\displaystyle\sum_{j=1}^{d}\frac{\partial}{\partial x_{j}}\left(B(u^{\varepsilon})\,\frac{\partial u^{\varepsilon}}{\partial x_{j}}\right)
\end{equation*}	
Applying chain rule, we get
\begin{equation}\label{KineticFormulation.Equation1}
\frac{\partial}{\partial t}\left( G_{n}\left(u^{\varepsilon}\right)\right) + \displaystyle\sum_{j=1}^{d}\,G^{\prime}_{n}\left(u^{\varepsilon}\right)\,f^{\prime}_{j}\left(u^{\varepsilon}\right)\frac{\partial u^{\varepsilon}}{\partial x_{j}}=\varepsilon\,G^{\prime}_{n}\left(u^{\varepsilon}\right)\displaystyle\sum_{j=1}^{d}\frac{\partial}{\partial x_{j}}\left(B(u^{\varepsilon})\,\frac{\partial u^{\varepsilon}}{\partial x_{j}}\right)
\end{equation}
Denote
$$q_{n}(z):=\int_{k}^{z}\,G^{\prime}_{n}\left(v\right)\,f^{\prime}(v)\,dv.$$
 Using $q_{n}$ in \eqref{KineticFormulation.Equation1}, we obtain
\begin{equation*}
\frac{\partial}{\partial t}\left( G_{n}\left(u^{\varepsilon}\right)\right) + \displaystyle\sum_{j=1}^{d}\,q_{nj}^{\prime}(u^{\varepsilon})\frac{\partial u^{\varepsilon}}{\partial x_{j}} = \varepsilon\,G^{\prime}_{n}\left(u^{\varepsilon}\right)\displaystyle\sum_{j=1}^{d}\frac{\partial}{\partial x_{j}}\left(B(u^{\varepsilon})\,\frac{\partial u^{\varepsilon}}{\partial x_{j}}\right) 
\end{equation*}
Passing to the limit as $n\to\infty$ in the sense of distribution, we obtain 
\begin{equation}\label{KineticFormulation.Equation2}
\frac{\partial}{\partial t}\eta(u^{\varepsilon};c) + \displaystyle\sum_{j=1}^{d}\,\frac{\partial}{\partial x_{j}}q_{j}(u^{\varepsilon};c) = \varepsilon\,\eta^{\prime}\left(u^{\varepsilon};c\right)\displaystyle\sum_{j=1}^{d}\frac{\partial}{\partial x_{j}}\left(B(u^{\varepsilon})\,\frac{\partial u^{\varepsilon}}{\partial x_{j}}\right)\,\,\mbox{in}\,\,\mathcal{D}^{\prime}\left(\Omega\times (0,T)\right). 
\end{equation}
From \eqref{KineticFormulation.Equation2}, we obtain
\begin{equation}\label{KineticFormulation.Equation3}
\begin{split}
\frac{\partial}{\partial t}\left[\eta(u^{\varepsilon};c)-\eta(0;c)\right] &+ \displaystyle\sum_{j=1}^{d}\,\frac{\partial}{\partial x_{j}}\left[q_{j}(u^{\varepsilon};c)-q_{j}(0;c)\right]\\ 
&=\varepsilon\,\eta^{\prime}\left(u^{\varepsilon};c\right)\displaystyle\sum_{j=1}^{d}\frac{\partial}{\partial x_{j}}\left(B(u^{\varepsilon})\,\frac{\partial u^{\varepsilon}}{\partial x_{j}}\right)\,\,\mbox{in}\,\,\mathcal{D}^{\prime}\left(\Omega\times (0,T)\right). 
\end{split}
\end{equation}
Differentiating \eqref{KineticFormulation.Equation3} with respect to $c$ we get
\begin{equation}\label{KineticFormulation.Equation3ABC1}
\begin{split}
\frac{\partial}{\partial t}\left(\frac{\partial}{\partial c}\left(\eta(u^{\varepsilon};c)-\eta(0;c)\right)\right) &+ \displaystyle\sum_{j=1}^{d}\,\frac{\partial}{\partial x_{j}}\left(\frac{\partial}{\partial c}\left(q_{j}(u^{\varepsilon};c)-q_{j}(0;c)\right)\right)\\ 
&=\frac{\partial}{\partial c}\left(\varepsilon\,\eta^{\prime}\left(u^{\varepsilon};c\right)\displaystyle\sum_{j=1}^{d}\frac{\partial}{\partial x_{j}}\left(B(u^{\varepsilon})\,\frac{\partial u^{\varepsilon}}{\partial x_{j}}\right)\right)\,\,\mbox{in}\,\,\mathcal{D}^{\prime}\left(\Omega\times (0,T)\right). 
\end{split}
\end{equation}
Note that $\eta(u^{\varepsilon};c)=\left|u^{\varepsilon}-c\right|$, $\eta(0;c)=\left|c\right|$. Denote
$$P\left(u^{\varepsilon};c\right):=\left|u^{\varepsilon}-c\right|-|c|.$$
We now compute the derivative of $P$ with respect to $c$.
\begin{eqnarray}\label{KineticFormulation.Equation4}
\frac{\partial}{\partial c}P(u^{\varepsilon};c) &=&\frac{\partial}{\partial c}\left(|u^{\varepsilon}-c|-|c|\right)\nonumber\\
&=& \frac{\partial}{\partial c}|u^{\varepsilon}-c|-\frac{\partial}{\partial c}|c|\nonumber\\
&=& \mbox{sg}(u^{\varepsilon}-c)-\mbox{sg }(c)\nonumber\\
\end{eqnarray}
For $j=1,2,\cdots,d$, denote
\begin{eqnarray}\nonumber
Q_{j}(u^{\varepsilon};c) &:=& q_{j}(u^{\varepsilon};c)-q_{j}(0;c),\nonumber\\
&=& \mbox{sg}\,(u^{\varepsilon}-c)\left(f_{j}(u^{\varepsilon})-f_{j}(c)\right) + \mbox{sg}(c)\,\left(f_{j}(0)-f_{j}(c)\right).\nonumber	
\end{eqnarray}	
Let us compute the derivative of $Q_{j}^{\varepsilon}$ with respect to $c$.
\begin{equation*}
\begin{split}
\frac{\partial}{\partial c}Q_{j}(u^{\varepsilon};c)
= 2\,\left(f_{j}(u^{\varepsilon}(x,t))-f_{j}(u^{\varepsilon}(x,t))\right)\,\delta_{c=u^{\varepsilon}(x,t)} -f_{j}^{\prime}(c)\,\mbox{sg}\,(u^{\varepsilon}-c)\\ +2\,\left(f_{j}(0)-f_{j}(0)\right)\,\delta_{c=0} -f_{j}^{\prime}(c)\,\mbox{sg}\left(c\right).	
\end{split} 
\end{equation*}
Therefore we have
\begin{equation}\label{KineticFormulation.Equation5}
\frac{\partial}{\partial c}Q_{j}(u^{\varepsilon};c)
=-f_{j}^{\prime}(c)\left(\,\,\mbox{sg}\left(u^{\varepsilon}-c\right)+\,\mbox{sg}(c)\,\right).
\end{equation}
Let $\phi\in\mathcal{D}\left(\Omega\times\mathbb{R}\times (0,T)\right)$. For $j\in\left\{1,2,\cdots,d\right\}$, we compute
\begin{eqnarray}\label{KineticFormulation.Equation3ABC2}
\left<\frac{\partial^{2}}{\partial x_{j}\partial c}Q_{j}(u^{\varepsilon};c),\phi\right> &=& -\left<\frac{\partial}{\partial c}Q_{j}(u^{\varepsilon};c),\frac{\partial\phi}{\partial x_{j}}\right>\nonumber\\
&=&  \Big<f_{j}^{\prime}(c)\left(\mbox{sg}\left(u^{\varepsilon}-c\right)+\mbox{sg}(c)\right), \frac{\partial\phi}{\partial x_{j}}\Big>\nonumber\\ 
&=& \Big<f_{j}^{\prime}(c)\frac{\partial}{\partial x_{j}}\left(\mbox{sg}\left(u^{\varepsilon}-c\right)+\mbox{sg}(c)\right),\phi\Big>\nonumber\\
&=&  \Big<f_{j}^{\prime}(c)\frac{\partial}{\partial x_{j}}\left(\mbox{sg}\left(u^{\varepsilon}-c\right)-\mbox{sg}(c)\right),\phi\Big>\nonumber\\
&=& \Big<f_{j}^{\prime}(c)\left(\mbox{sg}\left(u^{\varepsilon}-c\right)-\mbox{sg}(c)\right),\frac{\partial\phi}{\partial x_{j}}\Big>.
\end{eqnarray}
Denote
$$\chi^{\varepsilon}(x,t ;c):=\chi_{u^{\varepsilon}}(c),$$
where
$$\chi_{u^{\varepsilon}}(c):= \begin{cases}
1 ,\,\,\mbox{if}\,\,u^{\varepsilon}<c< 0,\\
-1 ,\,\,\mbox{if}\,\,0< c< u^{\varepsilon},\nonumber\\
0 ,\,\,\mbox{otherwise}\nonumber
\nonumber
\end{cases}\\$$
Also observe that $\int_{-\infty}^{\infty}\,\chi_{u^{\varepsilon}}(c)\,dc=-u^{\varepsilon}$ and $\chi^{\varepsilon}=\chi_{u^{\varepsilon}}(c)$. Dividing \eqref{KineticFormulation.Equation3ABC1} by 2 and applying the above computations, we get \eqref{KineticFormulation.Equation612}. \\
Denote 
$$p_{\mbox{min}}^{\varepsilon}:=\mbox{min}\left\{\displaystyle\min_{(x,t)\in\overline{\Omega_{T}}}\frac{\varepsilon}{2}\,\displaystyle\sum_{j=1}^{d}\,1\cdot\frac{\partial}{\partial x_{j}}\left(B(u^{\varepsilon})\frac{\partial u^{\varepsilon}}{\partial x_{j}}\right),\,\displaystyle\min_{(x,t)\in\overline{\Omega_{T}}}\frac{\varepsilon}{2}\,\displaystyle\sum_{j=1}^{d}\,\left(-1\right)\cdot\frac{\partial}{\partial x_{j}}\left(B(u^{\varepsilon})\frac{\partial u^{\varepsilon}}{\partial x_{j}}\right),\,0\right\}$$\\

Since $\mbox{range}\left(\eta^{\prime}\left(u^{\varepsilon};c\right)\right)=\mbox{sgn}\left(u^{\varepsilon}-c\right)=\left\{-1,0,1\right\}$, $p_{\mbox{min}}^{\varepsilon}$ is independent of $c$ variable and we have 
$$\frac{\varepsilon}{2}\,\displaystyle\sum_{j=1}^{d}\eta^{\prime}(u^{\varepsilon};c)\frac{\partial}{\partial x_{j}}\left(B(u^{\varepsilon})\frac{\partial u^{\varepsilon}}{\partial x_{j}}\right)\geq p_{\mbox{min}}^{\varepsilon}$$ and 
\begin{equation*}
\begin{split}
\frac{\partial}{\partial c}\left(\frac{\varepsilon}{2}\,\displaystyle\sum_{j=1}^{d}\eta^{\prime}(u^{\varepsilon};c)\frac{\partial}{\partial x_{j}}\left(B(u^{\varepsilon})\frac{\partial u^{\varepsilon}}{\partial x_{j}}\right)\right)\\
=\frac{\partial}{\partial c}\left(\frac{\varepsilon}{2}\,\displaystyle\sum_{j=1}^{d}\eta^{\prime}(u^{\varepsilon};c)\frac{\partial}{\partial x_{j}}\left(B(u^{\varepsilon})\frac{\partial u^{\varepsilon}}{\partial x_{j}}\right)-p^{\varepsilon}_{\mbox{min}}\right)
\end{split}
\end{equation*}
Therefore \eqref{KineticFormulation.Equation3} defines a kinetic formulation with $\chi^{\varepsilon}(x,t ;c):=\chi_{u^{\varepsilon}}(c)$ according to the definition of \cite[p.170]{Lions}\,$\blacksquare$
\end{proof}\\
\subsection{Boundedness of the Sequence of Measures}\label{Boundedness.sequenceofmeasures}
Denote
 $$m^{\varepsilon}:=\frac{\varepsilon}{2}\displaystyle\sum_{j=1}^{d}\,\eta^{\prime}\left(u^{\varepsilon};c\right)\frac{\partial }{\partial x_{j}}\left(B(u^{\varepsilon})\frac{\partial u^{\varepsilon}}{\partial x_{j}}\,\right).$$ 
 Firstly, we prove that for every $M>0$, the sequence of measures $\left(m^{\varepsilon}\right)$ is bounded in $\mathcal{M}\left(\Omega\times\left(-M,M\right)\times (0,T)\right)$. Secondly, since we want to apply Velocity Averaging lemma on $\mathbb{R}^{d}\times\mathbb{R}\times (0,\infty)$, we extends this sequence of measures $\left(m^{\varepsilon}\right)$ to  $\mathbb{R}^{d}\times\mathbb{R}\times (0,\infty)$ using standard arguement of Riesz representation theorem of dual spaces keeping the total variation same. This extension is denoted by $\widetilde{\widetilde{m^{\varepsilon}}}$. These results are proved in this subsection. Then we can assume a standard result from \cite{Lions} that the derivative of $\widetilde{\widetilde{m^{\varepsilon}}}$ with respect to $c$ can be expressed as the composition of inverses of Riesz potentials which is one of the requirements to apply Velocity Averaging lemma. \\
 For a real number $M>0$, denote
 $$\Omega_{MT}:=\Omega\times (-M,M)\times (0,T).$$
 $$C^{1}_{0}\left(\overline{\Omega_{MT}}\right):=\left\{v\in C^{1}\left(\overline{\Omega_{MT}}\right)\,:\,\,v(x,c,t)=0\,\,\,\mbox{on}\,\partial\Omega_{MT}\right\}.$$
 The next result shows that the sequence of measures $\left(m^{\varepsilon}\right)$ is bounded in $\mathcal{M}\left(\Omega_{MT}\right)$.
\begin{proposition}\label{MeasureBounded.1}
The sequence of measures $\left(m^{\varepsilon}\right)$ is bounded in $\mathcal{M}(\overline{\Omega_{MT}})$.
\end{proposition}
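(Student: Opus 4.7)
The plan is to establish the stronger statement that $\|m^\varepsilon\|_{L^1(\Omega_{MT})}$ is uniformly bounded in $\varepsilon$, and then invoke the continuous embedding $L^1(\Omega_{MT})\hookrightarrow\mathcal{M}(\overline{\Omega_{MT}})$. This is reasonable because, by Theorem~\ref{ExistenceofClassicalLadyzenskajap452}, $u^\varepsilon\in C^{2+\beta,(2+\beta)/2}(\overline{\Omega_T})$, so $\nabla\cdot(B(u^\varepsilon)\nabla u^\varepsilon)$ is continuous on $\overline{\Omega_T}$ and $m^\varepsilon$ is already a bounded measurable function on $\Omega_{MT}$; only the $\varepsilon$-uniform $L^1$ estimate is nontrivial.

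The key pointwise observation is that $\eta'(u^\varepsilon;c)=\mbox{sg}(u^\varepsilon-c)$ has modulus at most $1$ and, crucially, this bound is independent of $c$. This immediately gives
\[
|m^\varepsilon(x,c,t)|\le\frac{\varepsilon}{2}\,\bigl|\nabla\cdot(B(u^\varepsilon)\nabla u^\varepsilon)(x,t)\bigr|.
\]
Since the right-hand side does not depend on $c$, integrating over $c\in(-M,M)$ costs only a factor of $2M$, and then the PDE \eqref{regularized.IBVP.a} converts the diffusive divergence into the lower-order expression $u^\varepsilon_t+\nabla\cdot f(u^\varepsilon)$. Triangle inequality and the chain rule for $\nabla\cdot f(u^\varepsilon)=\sum_{j}f'_{j}(u^\varepsilon)\partial_{x_{j}}u^\varepsilon$ then yield
\[
\|m^\varepsilon\|_{L^{1}(\Omega_{MT})}\le M\,\|u^\varepsilon_{t}\|_{L^{1}(\Omega_{T})}+M\,\|f'\|_{\infty}\,\|\nabla u^\varepsilon\|_{(L^{1}(\Omega_{T}))^{d}}.
\]

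The remaining task is to bound the two terms on the right uniformly in $\varepsilon$. The bound on $\|u^\varepsilon_{t}\|_{L^{1}(\Omega_{T})}$ is precisely \eqref{B.BVestimate26}, established in \cite{Ramesh} and already invoked in Step~1 of the proof of Theorem~\ref{timedervative.thm1}; the companion spatial bound $\|\nabla u^\varepsilon\|_{(L^{1}(\Omega_{T}))^{d}}\le \mbox{const}$ is the BV propagation estimate for $u^\varepsilon$ proved in \cite{Ramesh} on the basis of the initial-data construction summarised in Lemma~\ref{regularized.lem1}. The main obstacle in the plan is therefore not the kinetic-formulation manipulation but the invocation of these BV estimates on $u^\varepsilon$: the weaker compact entropy production $\sqrt{\varepsilon}\,\|\nabla u^\varepsilon\|_{L^{2}(\Omega_{T})}\le C$ of Theorem~\ref{Compactness.lemma.1} alone does not suffice, because the bound $|\eta'|\le 1$ used above does not localise $m^\varepsilon$ in the velocity variable $c$, and no apparent cancellation in the decomposition $m^\varepsilon=\tfrac{\varepsilon}{2}\nabla\cdot(B(u^\varepsilon)\,\mbox{sg}(u^\varepsilon-c)\nabla u^\varepsilon)-\varepsilon B(u^\varepsilon)|\nabla u^\varepsilon|^{2}\delta(u^\varepsilon-c)$ rescues the divergence piece from being only in $W^{-1,\infty}$.
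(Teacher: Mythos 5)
Your reduction of $\|m^\varepsilon\|_{L^1(\Omega_{MT})}$ to $M\,\|u^\varepsilon_t\|_{L^1(\Omega_T)}+M\,\|f^\prime\|_{\infty}\,\|\nabla u^\varepsilon\|_{(L^1(\Omega_T))^d}$ is algebraically sound, but the two quantities you then need to control uniformly in $\varepsilon$ are precisely the BV-type bounds that are \emph{not} available in the setting of this proposition. The estimate \eqref{B.BVestimate26} on $\|u^\varepsilon_t\|_{L^1(\Omega_T)}$ and the companion spatial bound $\|\nabla u^\varepsilon\|_{(L^1(\Omega_T))^d}\le C$ are proved in \cite{Ramesh} under Hypothesis F, i.e.\ for $u_0\in BV_0(\Omega)\cap L^\infty(\Omega)$ with regularised data as in Lemma~\ref{regularized.lem1} satisfying a uniform $L^1$ gradient bound. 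Proposition~\ref{MeasureBounded.1}, however, belongs to the kinetic-formulation section, which must operate under Hypothesis D alone (it feeds into Theorem~\ref{Kinetic.Compactness.Result}, stated for $u_0\in L^\infty(\Omega)$ in any space dimension); the construction of $u_{0\varepsilon}$ in Lemma~\ref{hypothesisDinitialdatalem} provides only a uniform $L^\infty$ bound, not a uniform BV bound, and the announced purpose of this section is exactly to dispense with BV a priori estimates. So your plan establishes the proposition only in the Hypothesis F regime, not in the generality required here.

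The paper avoids this by never estimating $m^\varepsilon$ pointwise: it pairs $m^\varepsilon$ with $\phi\in C^1_0(\overline{\Omega_{MT}})$, integrates by parts once (justified via the $\tanh(n\,\cdot)$ regularisation of the sign function) to reach \eqref{Measure.Correction.equation14}, and then applies the Cauchy--Schwarz inequality so that only the entropy-production bound $\sqrt{\varepsilon}\,\|\nabla u^\varepsilon\|_{(L^2(\Omega_T))^d}\le C$ of Theorem~\ref{Compactness.lemma.1} is needed --- and that bound does hold under Hypothesis D. The price is that boundedness is first obtained only in $\left(C^1_0(\overline{\Omega_{MT}})\right)^{\ast}$, after which the Hahn--Banach and Riesz representation argument of Step 2 produces the representing measures. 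To rescue your route you would have to either restrict to Hypothesis F or find a substitute for the uniform $L^1$ bounds on $u^\varepsilon_t$ and $\nabla u^\varepsilon$ valid for merely $L^\infty$ data; your own closing remark correctly identifies that the $L^2$ entropy-production estimate alone cannot control the undifferentiated divergence term.
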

\begin{proof}
We prove Proposition \ref{MeasureBounded.1}	in two steps. In Step 1, we prove that the sequence $\left(m^{\varepsilon}\right)$ is bounded in $\left(C^{1}_{0}\left(\overline{\Omega_{MT}}\right)\right)^{\ast}$. In Step 2, We prove the measure representation of $\left(m^{\varepsilon}\right)$.\\
\vspace{0.1cm}\\
\textbf{Step 1:} For $n\in\mathbb{N}$ and $s\in\mathbb{R}$, denote $\mbox{sg}_{n}(s):=\mbox{tanh}\,(ns)$. Then $\mbox{sg}_{n}(s)\to\mbox{sg}(s)$ as $n\to\infty$ and $|s|\,\mbox{sg}_{n}^{\prime}(s)\leq 4$.	\\ 
Let $\phi\in C^{1}_{0}\left(\overline{\Omega_{MT}}\right)$. We consider
\begin{equation}\label{KineticFormulation.Equation613} 
\begin{split}
\left<m^{\varepsilon},\phi\right>=\frac{\varepsilon}{2}\,\displaystyle\sum_{j=1}^{d}\int_{\Omega_{MT}}\eta^{\prime}(u^{\varepsilon};c)\frac{\partial}{\partial x_{j}}\left(B(u^{\varepsilon})\frac{\partial u^{\varepsilon}}{\partial x_{j}}\right)\phi(x,c,t)\,dx\,dc\,dt\\ =\frac{\varepsilon}{2}\,\displaystyle\sum_{j=1}^{d}\int_{\Omega_{MT}}\mbox{sg}(u^{\varepsilon}-c)\frac{\partial}{\partial x_{j}}\left(B(u^{\varepsilon})\frac{\partial u^{\varepsilon}}{\partial x_{j}}\right)\phi(x,c,t)\,dx\,dc\,dt,
\\= \frac{\varepsilon}{2}\,\displaystyle\sum_{j=1}^{d}\displaystyle\lim_{n\to\infty}\int_{\Omega_{MT}}\mbox{sg}_{n}\left(|u^{\varepsilon}-c|\right)\,\mbox{sg}(u^{\varepsilon}-c)\left(\frac{\partial}{\partial x_{j}}\left(B(u^{\varepsilon})\frac{\partial u^{\varepsilon}}{\partial x_{j}}\right)\right)\phi(x,c,t)\,dx\,dc\,dt.
\end{split}
\end{equation}
 Since $u^{\varepsilon}\in C^{2+\beta,\,1+\frac{\beta}{2}}\left(\overline{\Omega_{T}}\right)$, then $u^{\varepsilon}-c \in H^{1}\left(\Omega_{MT}\right)$ and $ |u^{\varepsilon}-c| \in H^{1}\left(\Omega_{MT}\right)$. 
 Hence $\mbox{sg}_{n}\left( |u^{\varepsilon}-c|\right)\in H^{1}\left(\Omega_{MT}\right)$. As a consequence, we have $\mbox{sg}_{n}\left( |u^{\varepsilon}-c| \right)\,\phi\in H^{1}\left(\Omega_{MT}\right)$ and $B(u^{\varepsilon})\,\frac{\partial u^{\varepsilon}}{\partial x_{j}}\in  H^{1}\left(\Omega_{MT}\right)$. Therefore we have the following product rule ( see \cite[p.17]{MR2309679})
 \begin{eqnarray}\label{Measure.Correction.Equation1}
 \frac{\partial}{\partial x_{j}}\left(\mbox{sg}_{n}\left(|u^{\varepsilon}-c|\right)\,\phi\,B(u^{\varepsilon})\,\frac{\partial u^{\varepsilon}}{\partial x_{j}}\right) &=&\mbox{sg}_{n}\left(|u^{\varepsilon}-c|\right)\,\phi\,\frac{\partial}{\partial x_{j}}\left(B(u^{\varepsilon})\,\frac{\partial u^{\varepsilon}}{\partial x_{j}}\right)\nonumber\\
 && + B(u^{\varepsilon})\,\frac{\partial u^{\varepsilon}}{\partial x_{j}}\,\frac{\partial}{\partial x_{j}}\left(\mbox{sg}_{n}\left(|u^{\varepsilon}-c|\right)\,\phi\right).
 \end{eqnarray}
 Therefore we have
 \begin{eqnarray}\label{Measure.Correction.Equation2}
 \mbox{sg}_{n}\left(|u^{\varepsilon}-c|\right)\,\phi\,\frac{\partial}{\partial x_{j}}\left(B(u^{\varepsilon})\,\frac{\partial u^{\varepsilon}}{\partial x_{j}}\right) &=&\frac{\partial}{\partial x_{j}}\left(\mbox{sg}_{n}\left(|u^{\varepsilon}-c|\right)\,\phi\,B(u^{\varepsilon})\,\frac{\partial u^{\varepsilon}}{\partial x_{j}}\right)\nonumber\\
 &&-B(u^{\varepsilon})\,\frac{\partial u^{\varepsilon}}{\partial x_{j}}\,\,\mbox{sg}_{n}^{\prime}\left(|u^{\varepsilon}-c|\right)\,\,\frac{\partial u^{\varepsilon}}{\partial x_{j}}\,\,\mbox{sg}\left(u^{\varepsilon}-c\right)\,\phi\nonumber\\
 &&-B(u^{\varepsilon})\,\frac{\partial u^{\varepsilon}}{\partial x_{j}}\,\,\mbox{sg}_{n}\left(|u^{\varepsilon}-c|\right)\,\frac{\partial\phi}{\partial x_{j}}
 \end{eqnarray}
 \begin{equation}\label{KineticFormulation.Equation614}
 \begin{split}
 \frac{\varepsilon}{2}\,\displaystyle\sum_{j=1}^{d}\displaystyle\lim_{n\to\infty}\int_{\Omega_{MT}}\mbox{sg}_{n}\left(|u^{\varepsilon}-c|\right)\,\mbox{sg}(u^{\varepsilon}-c)\left(\frac{\partial}{\partial x_{j}}\left(B(u^{\varepsilon})\frac{\partial u^{\varepsilon}}{\partial x_{j}}\right)\right)\phi(x,c,t)\,dc\,dx\,dt\\
 =-\frac{\varepsilon}{2}\,\displaystyle\sum_{j=1}^{d}\displaystyle\lim_{n\to\infty}\Big\{\int_{\Omega_{MT}}\,\mbox{sg}(u^{\varepsilon}-c)\,\frac{\partial}{\partial x_{j}}\left(\mbox{sg}_{n}\left(|u^{\varepsilon}-c|\right)\,\phi\,B(u^{\varepsilon})\,\frac{\partial u^{\varepsilon}}{\partial x_{j}}\right)\,dc\,dx\,dt\Big\}\\
 -\frac{\varepsilon}{2}\,\displaystyle\sum_{j=1}^{d}\displaystyle\lim_{n\to\infty}\Big\{\int_{\Omega_{MT}}\mbox{sg}(u^{\varepsilon}-c)\,B(u^{\varepsilon})\,\frac{\partial u^{\varepsilon}}{\partial x_{j}}\,\,\mbox{sg}_{n}^{\prime}\left(|u^{\varepsilon}-c|\right)\,\,\frac{\partial u^{\varepsilon}}{\partial x_{j}}\,\,\mbox{sg}\left(u^{\varepsilon}-c\right)\,\phi\,dc\,dx\,dt\Big\}\\
 -\frac{\varepsilon}{2}\,\displaystyle\sum_{j=1}^{d}\displaystyle\lim_{n\to\infty}\Big\{\int_{\Omega_{MT}}\mbox{sg}_{n}\left(|u^{\varepsilon}-c|\right)\,\mbox{sg}(u^{\varepsilon}-c)\left(B(u^{\varepsilon})\frac{\partial u^{\varepsilon}}{\partial x_{j}}\right)\frac{\partial\phi}{\partial x_{j}}\,dc\,dx\,dt\Big\}
 \end{split}
 \end{equation}
 Since $u^{\varepsilon}=c$ \,gives\, $\frac{\partial u^{\varepsilon}}{\partial x_{j}}=0$, therefore we have
  $$\displaystyle\lim_{n\to\infty}\mbox{sg}_{n}^{\prime}(|u^{\varepsilon}-c|)=\displaystyle\lim_{n\to\infty}\mbox{sg}_{n}^{\prime}\left(\left|\frac{\partial u^{\varepsilon}}{\partial x_{j}}\right|\right).$$ 
  Applying the above observation in \eqref{KineticFormulation.Equation614}, we obtain
  \begin{equation}\label{Measure.Correction.Equation12}
  \begin{split}
  \frac{\varepsilon}{2}\,\displaystyle\sum_{j=1}^{d}\displaystyle\lim_{n\to\infty}\int_{\Omega_{MT}}\mbox{sg}_{n}\left(|u^{\varepsilon}-c|\right)\,\mbox{sg}(u^{\varepsilon}-c)\left(\frac{\partial}{\partial x_{j}}\left(B(u^{\varepsilon})\frac{\partial u^{\varepsilon}}{\partial x_{j}}\right)\right)\phi(x,c,t)\,dc\,dx\,dt\\
  =-\frac{\varepsilon}{2}\,\displaystyle\sum_{j=1}^{d}\displaystyle\lim_{n\to\infty}\Big\{\int_{\Omega_{MT}}\,\mbox{sg}(u^{\varepsilon}-c)\,\frac{\partial}{\partial x_{j}}\left(\mbox{sg}_{n}\left(|u^{\varepsilon}-c|\right)\,\phi\,B(u^{\varepsilon})\,\frac{\partial u^{\varepsilon}}{\partial x_{j}}\right)\,dc\,dx\,dt\Big\}\\
  -\frac{\varepsilon}{2}\,\displaystyle\sum_{j=1}^{d}\displaystyle\lim_{n\to\infty}\Big\{\int_{\Omega_{MT}}\mbox{sg}(u^{\varepsilon}-c)\,B(u^{\varepsilon})\,\frac{\partial u^{\varepsilon}}{\partial x_{j}}\,\,\mbox{sg}_{n}^{\prime}\left(\left|\frac{\partial u^{\varepsilon}}{\partial x_{j}}\right|\right)\,\,\frac{\partial u^{\varepsilon}}{\partial x_{j}}\,\,\mbox{sg}\left(u^{\varepsilon}-c\right)\,\phi\,dc\,dx\,dt\Big\}\\
  -\frac{\varepsilon}{2}\,\displaystyle\sum_{j=1}^{d}\displaystyle\lim_{n\to\infty}\Big\{\int_{\Omega_{MT}}\mbox{sg}_{n}\left(|u^{\varepsilon}-c|\right)\,\mbox{sg}(u^{\varepsilon}-c)\left(B(u^{\varepsilon})\frac{\partial u^{\varepsilon}}{\partial x_{j}}\right)\frac{\partial\phi}{\partial x_{j}}\,dc\,dx\,dt\Big\}
  \end{split}
  \end{equation}
  Consider the first term on the RHS of \eqref{Measure.Correction.Equation12}, we get 
   \begin{equation}\label{Measure.Correction.Equation13}
   \begin{split}
   \int_{\Omega_{MT}}\,\mbox{sg}(u^{\varepsilon}-c)\,\frac{\partial}{\partial x_{j}}\left(\mbox{sg}_{n}\left(|u^{\varepsilon}-c|\right)\,\phi\,B(u^{\varepsilon})\,\frac{\partial u^{\varepsilon}}{\partial x_{j}}\right)\,dc\,dx\,dt \\
   =\left<\mbox{sg}(u^{\varepsilon}-c)\, ,\,\,\frac{\partial}{\partial x_{j}}\left(\mbox{sg}_{n}\left(|u^{\varepsilon}-c|\right)\,\phi\,B(u^{\varepsilon})\,\frac{\partial u^{\varepsilon}}{\partial x_{j}}\right)\right>\\
   =-\left<\delta_{c=u^{\varepsilon}}\,,\,\,\mbox{sg}_{n}\left(|u^{\varepsilon}-c|\right)\,\phi\,B(u^{\varepsilon})\,\frac{\partial u^{\varepsilon}}{\partial x_{j}}\right>=0.
   \end{split}
   \end{equation}
   The second term converges to zero as $n\to\infty$ as an application of Dominated convergence theorem. This is because as $\left|\frac{\partial u^{\varepsilon}}{\partial x_{j}}\right|\mbox{sg}_{n}^{\prime}\left(\left|\frac{\partial u^{\varepsilon}}{\partial x_{j}}\right|\right)\leq 4$ and $\left|\frac{\partial u^{\varepsilon}}{\partial x_{j}}\right|\,\mbox{sg}_{n}^{\prime}\left(\left|\frac{\partial u^{\varepsilon}}{\partial x_{j}}\right|\right)\to 0$ as $n\to\infty$. 
Therefore using all the above computation, we arrive at
\begin{equation}\label{Measure.Correction.equation14}
\left<m^{\varepsilon},\phi\right> = -\frac{\varepsilon}{2}\,\displaystyle\sum_{j=1}^{d}\Big\{\int_{\Omega_{MT}}\,\mbox{sg}(u^{\varepsilon}-c)\left(B(u^{\varepsilon})\frac{\partial u^{\varepsilon}}{\partial x_{j}}\right)\frac{\partial\phi}{\partial x_{j}}\,dc\,dx\,dt\Big\}
\end{equation}
Therefore applying H\"{o}lder inequality we have
\begin{eqnarray}\label{Measure.Correction.Equation15}
\left|\left<m^{\varepsilon},\phi\right>\right| &=& \sqrt{\varepsilon}\|B\|_{L^{\infty}(I)}\displaystyle\sum_{j=1}^{d}\left(\sqrt{\varepsilon}\left\|\frac{\partial u^{\varepsilon}}{\partial x_{j}}\right\|_{L^{2}\left(\Omega_{MT}\right)}\,\right)\left\|\frac{\partial\phi}{\partial x_{j}}\right\|_{L^{2}\left(\Omega_{MT}\right)}\nonumber\\
&\leq& \sqrt{\varepsilon}\|B\|_{L^{\infty}(I)}\,\sqrt{2M}\,\left(\displaystyle\sum_{j=1}^{d}\left(\sqrt{\varepsilon}\left\|\frac{\partial u^{\varepsilon}}{\partial x_{j}}\right\|_{L^{2}\left(\Omega_{T}\right)}\,\right)\right)\,\mbox{Vol}\left(\Omega_{MT}\right)\,\|\phi\|_{C^{1}\left(\Omega_{MT}\right)}\nonumber\\
{}
\end{eqnarray}
Since for $j\in\left\{1,2,\cdot,d\right\}$, the sequence $\left(\sqrt{\varepsilon}\left\|\frac{\partial u^{\varepsilon}}{\partial x_{j}}\right\|_{L^{2}\left(\Omega_{T}\right)}\right)$ is bounded, $\mbox{Vol}\left(\Omega_{MT}\right)<\infty$ and $0<\varepsilon<1$, therefore the sequence $\left(m^{\varepsilon}\right)$ is bounded in $\left(C^{1}_{0}\left(\overline{\Omega_{MT}}\right)\right)^{\ast}$. \\
\vspace{0.1cm}\\
\textbf{Step 2:} We know that $C^{1}\left(\overline{\Omega_{MT}}\right)$ is a Banach space with $\|.\|_{C^{1}\left(\Omega_{MT}\right)}$ norm. Since $C^{1}_{0}\left(\overline{\Omega_{MT}}\right)$ is a closed subspace of $C^{1}\left(\overline{\Omega_{MT}}\right)$, therefore $C^{1}_{0}\left(\overline{\Omega_{MT}}\right)$ is a Banach space. We also have that $C^{1}_{0}\left(\overline{\Omega_{MT}}\right)$ is separable. Consider the map $P:C^{1}_{0}\left(\overline{\Omega_{MT}}\right)\to \left(C\left(\overline{\Omega_{MT}}\right)\right)^{d+1}$ defined by
$$P(u):=\left(u,\frac{\partial u}{\partial x_{1}},\frac{\partial u}{\partial x_{2}},\cdots,\frac{\partial u}{\partial x_{d}}\right).$$ 
Therefore $P$ is an isometric isomorphism from $C^{1}_{0}\left(\overline{\Omega_{MT}}\right)$ to $\mbox{Range}(P)$. Hence $\mbox{Range}(P)$ is a closed subspace of $\left(C\left(\overline{\Omega_{MT}}\right)\right)^{d+1}$. Since $P$ is an isometric isomorphism, for $z\in \mbox{Range}(P)$ and there exists unique $u\in C^{1}_{0}\left(\overline{\Omega_{MT}}\right)$ such that a linear functional on $\mbox{Range}(P)$ is defined as 
$$\left(m^{\varepsilon}\right)^{\ast}(z) :=\left(m^{\varepsilon}\right)^{\ast}(P(u))= m^{\varepsilon}(P^{-1}(z))= m^{\varepsilon}(u).$$ Since  the sequence $\left(m^{\varepsilon}\right)$ is bounded in $\left(C^{1}_{0}\left(\overline{\Omega_{MT}}\right)\right)^{\ast}$,  the sequence $\left(m^{\varepsilon}\right)^{\ast}$ is bounded in $\left(\mbox{Range}(P)\right)^{\ast}$. Hence by Hahn-Banach extension theorem, there exists a norm preserving extension $\widetilde{\left(m^{\varepsilon}\right)^{\ast}}$ of $\left(m^{\varepsilon}\right)^{\ast}$ to $\left(C\left(\overline{\Omega_{MT}}\right)\right)^{d+1}$. By Riesz representation theorem for measures there exists $\mu^{\varepsilon}\in\left(\mathcal{M}\left(\overline{\Omega_{MT}}\right)\right)^{d+1}$ vector measures such that for $u\in \left(C\left(\overline{\Omega_{MT}}\right)\right)^{d+1}$, we have 
$$\widetilde{\left(m^{\varepsilon}\right)^{\ast}}(u)=\displaystyle\sum_{j=0}^{d}\left<\mu_{j}^{\varepsilon},\,u_{j}\right>.$$
Therefore for $u\in C^{1}_{0}\left(\overline{\Omega_{MT}}\right)$, we get
\begin{eqnarray}\label{measure.correction.equation16}
m^{\varepsilon}(u)&=&\left(m^{\varepsilon}\right)^{\ast}(P(u))=\widetilde{\left(m^{\varepsilon}\right)^{\ast}}(P(u))\nonumber\\
&=& \left<\mu_{0}^{\varepsilon},u\right> +\displaystyle\sum_{j=1}^{d}\left<\mu_{j}^{\varepsilon},\frac{\partial u^{\varepsilon}}{\partial x_{j}}\right>
\end{eqnarray} 
\hfill{$\blacksquare$}
\end{proof}
\begin{proposition}\label{Measure.Correction.proposition1}
Let $j\in\left\{0,1,2,\cdots,d\right\}$. There exist extension $\widetilde{\mu_{j}^{\varepsilon}}$ in $\mathcal{M}\left(\mathbb{R}^{d}\times\mathbb{R}\times(0,\infty)\right)$ such that $\|\mu_{j}^{\varepsilon}\|=\|\widetilde{\mu_{j}^{\varepsilon}}\|$. 	
\end{proposition}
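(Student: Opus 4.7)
The plan is to extend $\mu_j^{\varepsilon}$ by zero. Set $K := \overline{\Omega_{MT}}$ and $X := \mathbb{R}^d \times \mathbb{R} \times (0,\infty)$, so that $K \cap X = \overline{\Omega} \times [-M,M] \times (0,T]$ is a Borel subset of $X$. For each Borel set $E \subset X$, define
\[
\widetilde{\mu_j^{\varepsilon}}(E) := \mu_j^{\varepsilon}(E \cap K).
\]
Countable additivity of $\widetilde{\mu_j^{\varepsilon}}$ is inherited from that of $\mu_j^{\varepsilon}$, since $E \mapsto E \cap K$ commutes with disjoint unions. Outer and inner regularity on $X$ transfer from the corresponding regularity of the Radon measure $\mu_j^{\varepsilon}$ on the compact metric space $K$, as any compact (resp.\ relatively open) approximation of $E \cap K$ inside $K$ is automatically compact (resp.\ representable as an intersection with an open set) in $X$.

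For the norm identity I would use the variational characterization of total variation. Given any finite Borel partition $\{E_k\}$ of $X$, the collection $\{E_k \cap K\}$ is a Borel partition of $K \cap X$, so
\[
\sum_k \bigl|\widetilde{\mu_j^{\varepsilon}}(E_k)\bigr| \;=\; \sum_k \bigl|\mu_j^{\varepsilon}(E_k \cap K)\bigr| \;\leq\; \|\mu_j^{\varepsilon}\|_{\mathcal{M}(K)},
\]
which yields $\|\widetilde{\mu_j^{\varepsilon}}\|_{\mathcal{M}(X)} \leq \|\mu_j^{\varepsilon}\|_{\mathcal{M}(K)}$. Conversely, any Borel partition $\{F_k\}$ of $K$ lifts to the partition $\{F_k \cap X\} \cup \{X \setminus K\}$ of $X$, and $\widetilde{\mu_j^{\varepsilon}}$ vanishes on $X \setminus K$; this delivers the reverse inequality provided the variation of $\mu_j^{\varepsilon}$ on the face $K \setminus X = \overline{\Omega} \times [-M,M] \times \{0\}$ is zero.

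The single delicate point, and hence the main obstacle, is to ensure that $|\mu_j^{\varepsilon}|$ does not charge the initial-time face $\{t=0\}$, for otherwise mass would be lost under extension and the equality would degrade to a strict inequality. I would address this by revisiting the Hahn-Banach / Riesz step in the proof of Proposition \ref{MeasureBounded.1}: the functional $m^{\varepsilon}$ is originally defined on $C^1_0(\overline{\Omega_{MT}})$, whose members vanish on $\partial \Omega_{MT}$ and in particular at $t=0$, so the action of $m^{\varepsilon}$ is insensitive to test function values on that face. Consequently one can select a norm-preserving Hahn-Banach extension of $(m^{\varepsilon})^{\ast}$ to $(C(\overline{\Omega_{MT}}))^{d+1}$ whose Riesz representative $\mu_j^{\varepsilon}$ is concentrated on $K \cap X$. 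Equivalently, one first realises $\mu_j^{\varepsilon}$ as a Radon measure on the locally compact space $\Omega \times (-M,M) \times (0,T)$ with the same norm, and then embeds trivially into $\mathcal{M}(X)$ by extension by zero, whereupon $\|\widetilde{\mu_j^{\varepsilon}}\|_{\mathcal{M}(X)} = \|\mu_j^{\varepsilon}\|_{\mathcal{M}(K)}$ follows from the two variational inequalities above.
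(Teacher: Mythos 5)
Your route --- extending the measure itself by zero and computing the total variation over partitions --- is genuinely different from the paper's, which never manipulates the measure directly: the paper extends the \emph{test functions} from $\overline{\Omega_{MT}}$ to $\mathbb{R}^{d}\times\mathbb{R}\times(0,\infty)$ by Tietze (preserving sup norms), extends the functional by Hahn--Banach (preserving its norm), and only then reads off $\widetilde{\mu_{j}^{\varepsilon}}$ from a Riesz representation, so that $\|\widetilde{\mu_{j}^{\varepsilon}}\|=\|\mu_{j}^{\varepsilon}\|$ falls out of a chain of isometries. In particular the paper's $\widetilde{\mu_{j}^{\varepsilon}}$ need not coincide with your extension by zero.

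However, your argument has a genuine gap at exactly the point you flag, and the proposed repair does not close it. The measures $\mu_{j}^{\varepsilon}$ were produced in Proposition \ref{MeasureBounded.1} by an \emph{arbitrary} Hahn--Banach extension of $\left(m^{\varepsilon}\right)^{\ast}$ from $\mbox{Range}(P)$ to $\left(C\left(\overline{\Omega_{MT}}\right)\right)^{d+1}$; nothing in that construction prevents $|\mu_{j}^{\varepsilon}|$ from charging the face $\overline{\Omega}\times[-M,M]\times\{0\}$, which lies in $\overline{\Omega_{MT}}$ but outside $\mathbb{R}^{d}\times\mathbb{R}\times(0,\infty)$. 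Your assertion that one can \emph{select} a norm-preserving Hahn--Banach extension whose Riesz representative is concentrated on $K\cap X$ is precisely what would need proof, and Hahn--Banach by itself gives no such control; note also that $\mbox{Range}(P)$ is not contained in $\left(C_{0}(\Omega_{MT})\right)^{d+1}$, since the derivatives $\partial u/\partial x_{j}$ of $u\in C^{1}_{0}\left(\overline{\Omega_{MT}}\right)$ need not vanish on $\partial\Omega_{MT}$, so you cannot simply run the representation inside $C_{0}$ of the open box. The gap is repairable: equation \eqref{Measure.Correction.equation14} exhibits $m^{\varepsilon}$ with $L^{1}(\Omega_{MT})$ densities $-\frac{\varepsilon}{2}\,\mbox{sg}(u^{\varepsilon}-c)\,B(u^{\varepsilon})\,\partial u^{\varepsilon}/\partial x_{j}$, so one may choose $\mu_{0}^{\varepsilon}=0$ and the $\mu_{j}^{\varepsilon}$ absolutely continuous, and such measures charge no Lebesgue-null set. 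But that choice must be made explicitly and substituted back into Proposition \ref{MeasureBounded.1}; as written, with the $\mu_{j}^{\varepsilon}$ already fixed, your construction only guarantees $\|\widetilde{\mu_{j}^{\varepsilon}}\|\leq\|\mu_{j}^{\varepsilon}\|$, and the claimed equality may fail.
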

\begin{proof}
For $j\in\left\{0,1,2,\cdots,d\right\}$, since $\left(\mu_{j}^{\varepsilon}\right)$ is a bounded sequence of measures, $\left(\mu_{j}^{\varepsilon}\right)$ can be identified as elements $\left(T_{\mu_{j}^{\varepsilon}}\right)$ in $\left(C\left(\overline{\Omega_{MT}}\right)\right)^{\ast}$, where $C\left(\overline{\Omega_{MT}}\right)$ denotes the set of bounded continuous functions. Applying Tietze extension theorem, we extend continuous function $f$ in $C\left(\overline{\Omega_{MT}}\right)$ to a continuous function $\tilde{f}$ in $C_{b}\left(\mathbb{R}^{d}\times\mathbb{R}\times(0,\infty)\right)$ such that 
$$\mbox{sup}\left\{\left|f(x,c,t)\right|\,\,:\,\,\left(x,c,t\right)\in\overline{\Omega_{MT}}\right\}=\mbox{sup}\left\{\left|\tilde{f}(x,c,t)\right|\,\,:\,\,\left(x,c,t\right)\in\mathbb{R}^{d}\times\mathbb{R}\times (0,\infty)\right\},$$
where the set $C_{b}\left(\mathbb{R}^{d}\times\mathbb{R}\times(0,\infty)\right)$ denotes the set of bounded continuous functions. Denote
$$Y:=\left\{\tilde{f}\,\,:\,\,\tilde{f}\Big|_{\overline{\Omega_{MT}}}=f\in C\left(\overline{\Omega_{MT}}\right)\right\}.$$
Let $T\in Y^{\ast}$, then 
\begin{eqnarray}\label{Kinetic.ReizReprentation.eqn1}
\left\|T\right\|_{Y^{\ast}}&=&\mbox{sup}\left\{\left|T(\tilde{f})\right|\,\,:\,\,\|\tilde{f}\|_{\infty}\leq 1\right\},\nonumber\\
&=& \mbox{sup}\left\{\left|T(\tilde{f})\right|\,\,:\,\,\|\tilde{f}\|_{\infty}= \|f\|_{\infty}\leq 1\right\},\nonumber\\
&=& \mbox{sup}\left\{\left|T(f)\right|\,\,:\,\,\|f\|_{\infty}\leq 1\right\},\nonumber\\
&=& \left\|T\right\|_{\left(C\left(\Omega_{MT}\right)\right)^{\ast}}.
\end{eqnarray}
The map $\mu_{j}^{\varepsilon}\mapsto T_{\mu_{j}^{\varepsilon}}$ is an isometric isomorphism from $\mathcal{M}\left(\overline{\Omega_{MT}}\right)$ to $\left(C\left(\Omega_{MT}\right)\right)^{\ast}$. Therefore $\|\mu_{j}^{\varepsilon}\|=\left\|T_{\mu_{j}^{\varepsilon}}\right\|$. \\
Let $T_{Y\mu_{j}^{\varepsilon}}$ be linear maps on $Y$ such that 
$$T_{Y\mu_{j}^{\varepsilon}}\Big|_{C\left(\overline{\Omega_{MT}}\right)}= T_{\mu_{j}^{\varepsilon}}.$$
Then applying \eqref{Kinetic.ReizReprentation.eqn1}, we conclude that $\left\|T_{Y\mu_{j}^{\varepsilon}}\right\|=\|\mu_{j}^{\varepsilon}\|$. Then by Hahn-Banach theorem, we extend $T_{Ym^{\varepsilon}}$ to a continuous linear functional $\widetilde{T}_{\mu_{j}^{\varepsilon}}$ on $C_{b}\left(\mathbb{R}^{d}\times\mathbb{R}\times (0,\infty)\right)$ such that
$$\left\|\widetilde{T}_{\mu_{j}^{\varepsilon}}\right\|=\left\|T_{Y\mu_{j}^{\varepsilon}}\right\|=\|\mu_{j}^{\varepsilon}\|.$$
There exists $\left(\widetilde{\mu_{j}^{\varepsilon}}\right)$ in $\mathcal{M}\left(\mathbb{R}^{d}\times\mathbb{R}\times (0,\infty)\right)$ such that 
$$\|\tilde{\mu_{j}^{\varepsilon}}\|=\left\|\tilde{T}_{\mu_{j}^{\varepsilon}}\right\|=\|\mu_{j}^{\varepsilon}\|.$$
Since $\left(\mu_{j}^{\varepsilon}\right)$ is bounded, we get $\left(\widetilde{\mu_{j}^{\varepsilon}}\right)$ is also bounded\,\,$\blacksquare$\\	
\end{proof}	
\vspace{0.1cm}\\
Applying Proposition \ref{MeasureBounded.1} and Proposition \ref{Measure.Correction.proposition1}, we conclude the following result.
\begin{proposition}\label{Measure.Correction.Proposition3}
Let $\widetilde{\mu^{\varepsilon}}:=\left(\widetilde{\mu_{0}^{\varepsilon}},\widetilde{\mu_{1}^{\varepsilon}},\widetilde{\mu_{2}^{\varepsilon}},\cdots,\widetilde{\mu_{d}^{\varepsilon}}\right)$. Then the sequence of linear functional  $\widetilde{\widetilde{m^{\varepsilon}}}:\left(C\left(\mathbb{R}^{d}\times\mathbb{R}\times(0,\infty)\right)\right)^{d+1}\to\mathbb{R}$ defined by 
$$\widetilde{\widetilde{\left(m^{\varepsilon}\right)}}(u)=\displaystyle\sum_{j=0}^{d}\left<\widetilde{\mu_{j}^{\varepsilon}},\,u_{j}\right>$$ 
is bounded in $\mathcal{M}\left(\mathbb{R}^{d}\times\mathbb{R}\times(0,\infty)\right)$.	
\end{proposition}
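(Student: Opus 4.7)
The plan is to assemble the conclusion directly from Propositions \ref{MeasureBounded.1} and \ref{Measure.Correction.proposition1}, so the argument is essentially a norm estimate rather than a new construction. First I would recall that Proposition \ref{MeasureBounded.1} shows the sequence $(m^{\varepsilon})$ is bounded in $\left(C^{1}_{0}\left(\overline{\Omega_{MT}}\right)\right)^{\ast}$ with a bound depending on $M$, on $\|B\|_{L^{\infty}(I)}$, and on the uniform $L^{2}$ bound $\sqrt{\varepsilon}\|\nabla u^{\varepsilon}\|_{L^{2}(\Omega_{T})}\leq C$ provided by Theorem \ref{Compactness.lemma.1}. The Hahn--Banach/Riesz representation step in the proof of Proposition \ref{MeasureBounded.1} delivers the representation $m^{\varepsilon}(u)=\langle\mu^{\varepsilon}_{0},u\rangle+\sum_{j=1}^{d}\langle\mu^{\varepsilon}_{j},\partial_{x_{j}}u\rangle$ with $\sum_{j=0}^{d}\|\mu^{\varepsilon}_{j}\|_{\mathcal{M}(\overline{\Omega_{MT}})}\leq C_{M}$, where $C_{M}$ is independent of $\varepsilon$.

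Next I would invoke Proposition \ref{Measure.Correction.proposition1} componentwise: for each $j\in\{0,1,\ldots,d\}$ it produces an extension $\widetilde{\mu_{j}^{\varepsilon}}\in\mathcal{M}\left(\mathbb{R}^{d}\times\mathbb{R}\times(0,\infty)\right)$ satisfying $\|\widetilde{\mu_{j}^{\varepsilon}}\|=\|\mu_{j}^{\varepsilon}\|$. Combining with the previous step,
\begin{equation*}
\sum_{j=0}^{d}\|\widetilde{\mu_{j}^{\varepsilon}}\|_{\mathcal{M}\left(\mathbb{R}^{d}\times\mathbb{R}\times(0,\infty)\right)}=\sum_{j=0}^{d}\|\mu_{j}^{\varepsilon}\|_{\mathcal{M}(\overline{\Omega_{MT}})}\leq C_{M},
\end{equation*}
uniformly in $\varepsilon$.

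Then I would estimate $\widetilde{\widetilde{m^{\varepsilon}}}$ directly. For any $u=(u_{0},u_{1},\ldots,u_{d})\in\left(C\left(\mathbb{R}^{d}\times\mathbb{R}\times(0,\infty)\right)\right)^{d+1}$, the triangle inequality gives
\begin{equation*}
\bigl|\widetilde{\widetilde{m^{\varepsilon}}}(u)\bigr|\leq\sum_{j=0}^{d}\bigl|\langle\widetilde{\mu_{j}^{\varepsilon}},u_{j}\rangle\bigr|\leq\Bigl(\max_{0\leq j\leq d}\|u_{j}\|_{\infty}\Bigr)\sum_{j=0}^{d}\|\widetilde{\mu_{j}^{\varepsilon}}\|\leq C_{M}\,\|u\|_{\left(C\left(\mathbb{R}^{d}\times\mathbb{R}\times(0,\infty)\right)\right)^{d+1}},
\end{equation*}
so $\widetilde{\widetilde{m^{\varepsilon}}}$ defines a bounded linear functional on the product space with norm bounded by $C_{M}$ independently of $\varepsilon$. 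Identifying this functional with its associated vector-valued measure via the Riesz representation on each component gives the desired boundedness in $\mathcal{M}\left(\mathbb{R}^{d}\times\mathbb{R}\times(0,\infty)\right)$.

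There is no genuine obstacle here, since the hard analytic work (the $L^{2}$-energy estimate feeding the $\left(C^{1}_{0}\right)^{\ast}$ bound, and the Hahn--Banach+Tietze extension preserving total variation) is already packaged into Propositions \ref{MeasureBounded.1} and \ref{Measure.Correction.proposition1}. The only point requiring mild care is to make sure that the extension procedure in Proposition \ref{Measure.Correction.proposition1} is applied componentwise so that the sum of norms is preserved, and to record explicitly that $C_{M}$ is independent of $\varepsilon$; both are immediate from the previous results.
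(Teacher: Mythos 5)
Your proposal is correct and follows essentially the same route as the paper, which states this proposition as a direct consequence of Proposition \ref{MeasureBounded.1} and Proposition \ref{Measure.Correction.proposition1} without writing out the details; you have simply made explicit the componentwise norm-preservation $\|\widetilde{\mu_{j}^{\varepsilon}}\|=\|\mu_{j}^{\varepsilon}\|$ and the triangle-inequality estimate that yields a bound independent of $\varepsilon$. No gap.
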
	
\subsection{Application of Velocity Averaging lemma}
  In this subsection, we extract an {\it a.e.} convergent subsequence of $\left(u^{\varepsilon}\right)$ by applying Velocity Averaging lemma. In order to apply velocity averaging lemma, from equation \eqref{KineticFormulation.Equation612}, we recall the following kinetic equation of \eqref{regularized.IBVP.a}.  
  \begin{equation}\label{KineticFormulation.Equation612A1}
  \frac{\partial \chi^{\varepsilon}}{\partial t} +\displaystyle\sum_{j=1}^{d}f_{j}^{\prime}(c)\,\frac{\partial\chi^{\varepsilon}}{\partial x_{j}} = \frac{\partial m^{\varepsilon}}{\partial c}\,\,\mbox{in}\,\,\mathcal{D}^{\prime}\left(\Omega\times\mathbb{R}\times (0,T)\right).
  \end{equation}
We recall the following result which is used to extract a convergent subsequence of $\left(u^{\varepsilon}\right)$.
\begin{theorem}\cite[p.178]{Lions}\label{VelocityAveragingLemma.12}
Let $1<p\leq 2$, let $h$ be bounded in $L^{p}\left(\mathbb{R}^{d}\times\mathbb{R}_{c}\times\left(0,\infty\right)\right)$, let $g$ belong to a compact set of $L^{p}\left(\mathbb{R}^{d}\times\mathbb{R}_{c}\times\left(0,\infty\right)\right)$ and $r\geq 0$. We assume that $h$ satisfies 
\begin{equation}\label{Velocity.AveragingEquation1.A1}
\frac{\partial h}{\partial t} + \displaystyle\sum_{i=1}^{d}\,f^{\prime}_{i}(c)\frac{\partial h}{\partial x_{i}} =\left(-\Delta_{x,t} +1\right)^{\frac{1}{2}}\left(-\Delta_{c} +1\right)^{\frac{r}{2}}g\,\,\mbox{in}\,\,\mathcal{D}^{\prime}\left(\mathbb{R}^{d}_{x}\times\mathbb{R}_{c}\times (0,\infty)\right),
\end{equation}	
where for $i=1,2,\cdots,d$, each $f_{i}^{\prime}\in C^{l,\alpha}_{loc}$ with $l=r$, $\alpha=1$ if $r$ is an integer, $l=[r]$, $\alpha=r-l$, if $r$ is not an integer. Let $\psi\in L^{p^{\prime}}\left(\mathbb{R}_{c}\right)$ having essential compact support.
Let the following 
\begin{equation*}
\begin{split}
\mbox{meas}\Big\{c\in\mbox{supp}\,\psi,\,\,\tau+\,\left(f_{1}^{\prime}(c),f_{2}^{\prime}(c),\cdots,f_{d}^{\prime}(c)\right)\cdot\xi=0\Big\}=0\,\, \\
\mbox{for all}\,\left(\tau,\xi\right)\in\mathbb{R}\times\mathbb{R}^{d}\,\, \mbox{with}\,\,\tau^{2} +\left|\xi\right|^{2}=1.
\end{split}
\end{equation*}
holds. Then $\int_{R}\,h\psi\,dc$ belongs to a compact set of $L^{p}_{loc}\left(\mathbb{R}^{d}\times (0,\infty)\right)$.	
\end{theorem}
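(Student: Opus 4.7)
The plan is to prove this velocity averaging lemma by Fourier analysis in the space-time variables, following the Golse-Lions-Perthame-Sentis framework as refined by Lions-Perthame-Tadmor. First I would reduce by mollification to the case that $h$, $g$, $\psi$ are smooth with compact supports so that all subsequent manipulations are rigorous. Next I would Fourier transform the kinetic equation \eqref{Velocity.AveragingEquation1.A1} in $(x, t)$, writing $\hat{h}(\xi, c, \tau)$ for the transform of $h$; the symbol of the transport operator becomes $i(\tau + f'(c) \cdot \xi)$, and the right-hand side becomes $(1 + |\xi|^2 + |\tau|^2)^{1/2} (-\Delta_c + 1)^{r/2} \hat{g}$.

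The heart of the argument is a frequency-dependent decomposition of the velocity profile $\psi$. For parameters $R > 0$ (a high-frequency threshold) and $\delta > 0$ (the width of a neighborhood of the characteristic set), choose a smooth cutoff $\chi_\delta$ equal to $1$ near zero and supported in $[-2\delta, 2\delta]$. On the high-frequency region $|\xi|^2 + |\tau|^2 > R$, split
\[
\psi(c) = \psi(c)\, \chi_\delta\!\left( \frac{\tau + f'(c) \cdot \xi}{(1 + |\xi|^2 + |\tau|^2)^{1/2}} \right) + \psi(c)\left( 1 - \chi_\delta\!\left( \frac{\tau + f'(c) \cdot \xi}{(1 + |\xi|^2 + |\tau|^2)^{1/2}} \right) \right).
\]
On the support of $1 - \chi_\delta$ the transport symbol is invertible with a gain that exactly compensates the factor $(1 + |\xi|^2 + |\tau|^2)^{1/2}$ on the right-hand side; integrating by parts in $c$ transfers the factor $(-\Delta_c + 1)^{r/2}$ from $g$ onto $\psi$, yielding a net gain of one derivative in $(x, t)$ for the contribution of this piece to $\int h \psi \, dc$. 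By Rellich-Kondrachov this part is compact in $L^p_{\mathrm{loc}}(\mathbb{R}^d \times (0, \infty))$.

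The main obstacle, and the step requiring the non-degeneracy hypothesis, is the piece supported near the characteristic set $\tau + f'(c) \cdot \xi = 0$, where the symbol cannot be inverted. Here one must upgrade the pointwise hypothesis
\[
\mbox{meas}\bigl\{ c \in \mbox{supp}\,\psi : \tau + f'(c) \cdot \xi = 0 \bigr\} = 0 \quad \text{for all unit } (\tau, \xi),
\]
to a quantitative estimate
\[
\sup_{\tau^2 + |\xi|^2 = 1} \mbox{meas}\bigl\{ c \in \mbox{supp}\,\psi : |\tau + f'(c) \cdot \xi| \leq \delta \bigr\} \longrightarrow 0 \quad \text{as } \delta \to 0,
\]
which follows by a compactness argument on the unit sphere in $(\tau, \xi)$ combined with continuity of the associated set function. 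Together with the uniform $L^p$ bound on $h$ and H\"older's inequality (using $\psi \in L^{p'}$), this forces the characteristic-set contribution to have $L^p$ norm tending to $0$ uniformly in $R$ as $\delta \to 0$.

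To finish, I would combine the three pieces: the low-frequency part $|\xi|^2 + |\tau|^2 \leq R$ is compact in $L^p_{\mathrm{loc}}$ by a direct Rellich argument (the Fourier support is in a fixed compact set, so smoothness in $(x, t)$ is automatic); the smooth high-frequency part is compact by the gain-of-derivative estimate; and the characteristic-set part can be made arbitrarily small in $L^p$. A diagonal extraction over $R \to \infty$ and $\delta \to 0$ yields the compactness of $\int_{\mathbb{R}} h \psi \, dc$ in $L^p_{\mathrm{loc}}(\mathbb{R}^d \times (0, \infty))$. The subtle technical point in the regime $1 < p < 2$ (where Plancherel does not directly apply) is to replace the $L^2$-based Fourier multiplier estimates by $L^p$ ones via Marcinkiewicz or complex interpolation between the easy $L^2$ bound and an $L^\infty \to BMO$ bound on the relevant multipliers, which is where the restriction $p \leq 2$ is used. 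This interpolation together with the uniformity of the modulus of continuity of the measure-zero set in the unit direction $(\tau, \xi)$ constitutes the principal technical obstacle.
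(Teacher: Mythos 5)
You should first note that the paper does not prove this theorem at all: it is imported verbatim from Lions--Perthame--Tadmor \cite[p.178]{Lions} and used as a black box, so there is no internal proof to compare against. Your sketch follows the standard argument from that reference (Fourier transform in $(x,t)$, frequency-dependent splitting of $\psi$ by a cutoff $\chi_\delta$ around the characteristic set, quantification of the non-degeneracy hypothesis by compactness of the unit sphere in $(\tau,\xi)$, and an interpolation step to leave the Plancherel regime), and that architecture is the correct one.

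There is, however, one step that would fail as written. On the support of $1-\chi_\delta$ you claim the inversion of the transport symbol yields ``a net gain of one derivative in $(x,t)$'' and then invoke Rellich--Kondrachov. But on that region the inverse symbol is only bounded by $\delta^{-1}\left(1+|\xi|^{2}+|\tau|^{2}\right)^{-\frac{1}{2}}$, and this decay is \emph{exactly} consumed by the factor $\left(-\Delta_{x,t}+1\right)^{\frac{1}{2}}$ sitting on the right-hand side of \eqref{Velocity.AveragingEquation1.A1} --- as you yourself say, the gain ``exactly compensates'' it. So there is no net Sobolev gain for this piece and Rellich--Kondrachov does not apply to it. The compactness of this contribution must instead come from the hypothesis that $g$ ranges over a \emph{compact} subset of $L^{p}$: for fixed $\delta$ and $R$ the non-characteristic piece of $\int h\psi\,dc$ is the image of $g$ under a fixed bounded linear operator on $L^{p}$ (boundedness for $1<p<2$ being exactly where your multiplier/interpolation discussion is needed, together with the transfer of $\left(-\Delta_{c}+1\right)^{\frac{r}{2}}$ onto $\psi\,\chi_\delta$, which is where the $C^{l,\alpha}_{loc}$ regularity of the $f_{i}^{\prime}$ enters), and a continuous image of a compact set is compact. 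With that repair, and with the near-characteristic piece made uniformly small via the quantified non-degeneracy estimate as you describe, the diagonal argument over $R\to\infty$, $\delta\to 0$ closes the proof. You should also say a word about extending $h$ and $g$ from $t\in(0,\infty)$ to $t\in\mathbb{R}$ before Fourier transforming in $t$, since the conclusion is only claimed in $L^{p}_{loc}$.
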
	
 We want to apply Theorem \ref{VelocityAveragingLemma.12} with $h=\chi^{\varepsilon}(c)=\chi_{u^{\varepsilon}(x,t)}(c)$. Observe that $h$ is defined on $\mathbb{R}^{d}\times\mathbb{R}\times (0,\infty)$ but in our case $\chi^{\varepsilon}$ is defined on $\Omega\times\mathbb{R}\times\left(0,T\right)$. In order to get values of $\chi^{\varepsilon}$ on $\mathbb{R}^{d}\times\mathbb{R}\times (0,\infty)$,  we need to extend $u^{\varepsilon}$ outside $\Omega_{T}$. Therefore we define the following extension of $u^{\varepsilon}$. Denote
\begin{equation}\nonumber\\
\widetilde{u^{\varepsilon}}(x,t):=\begin{cases}
u^{\varepsilon}(x,t)\,\,\mbox{if}\,\,(x,t)\in \left(\Omega_{T}\cup\left(\Omega\times\left\{0\right\}\right)\cup\left(\partial\Omega\times(0,T)\right)\right)\nonumber\\
0\,\,\,\,\mbox{if}\,\,\,(x,t)\in\left(\mathbb{R}^{d}\times (0,\infty)\right)\setminus \left(\Omega_{T}\cup\left(\Omega\times\left\{0\right\}\right)\cup\left(\partial\Omega\times(0,T)\right)\right).\nonumber\\
\end{cases}
\end{equation}
\vspace{0.2cm}
Denote $\widetilde{\chi^{\varepsilon}}(x,c,t):=\chi_{\widetilde{u^{\varepsilon}(x,t)}}(c)$.
Observe that $\widetilde{\chi^{\varepsilon}}(x,c;t)=\chi_{\widetilde{u^{\varepsilon}}(x,t)}(c)\equiv 0$ if $(x,c,t)\notin\Omega\times\left[-\|u_{0}\|_{L^{\infty}\left(\Omega\right)},\|u_{0}\|_{L^{\infty}\left(\Omega\right)}\right]\times (0,T)$ as $\|u^{\varepsilon}\|_{L^{\infty}\left(\Omega_{T}\right)}\leq \|u_0\|_{L^{\infty}\left(\Omega\right)}$.\\
\vspace{0.1cm}\\
We now assume the following result from \cite[p.178]{Lions} as the definitions of the inverse of the Bessel potential given in \cite{Stein} and \cite{Samko} are same.
	\begin{lemma}\label{Lion.Parthame.Tadmor.p.178}
		{\em For $1<p\leq 2$ and $r> 1+ \frac{d+2}{p}$, there exists a sequence $\left(g^{\varepsilon}\right)$ in a compact set of $L^{p}(\mathbb{R}^{d}\times\mathbb{R}\times(0,\infty))$ such that\, $\frac{\partial \widetilde{\widetilde{{m}^{\varepsilon}}}}{\partial c}=\left(-\Delta_{x,t} + I\right)^{\frac{1}{2}}\left(-\Delta_{c}+I\right)^{\frac{r}{2}}g^{\varepsilon}$ in $\mathcal{D}^{\prime}\left(\mathbb{R}^{d}\times\mathbb{R}\times (0,\infty)\right)$.}
\end{lemma}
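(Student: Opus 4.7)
The plan is to construct $g^{\varepsilon}$ explicitly by inverting the Bessel multiplier on the Fourier side, and then to leverage the uniform compact support of $\widetilde{\widetilde{m^{\varepsilon}}}$ together with the smoothing action of the Bessel potentials to obtain both $L^{p}$-boundedness and $L^{p}$-compactness.

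First, define the anisotropic operator $\mathcal{A}:=(-\Delta_{x,t}+I)^{1/2}(-\Delta_{c}+I)^{r/2}$, whose Fourier symbol on $\mathbb{R}^{d+2}$ is $(1+|\xi|^{2})^{1/2}(1+|\eta|^{2})^{r/2}$ with $\xi\in\mathbb{R}^{d+1}$ dual to $(x,t)$ and $\eta\in\mathbb{R}$ dual to $c$. I would set $g^{\varepsilon}:=\mathcal{A}^{-1}\,\partial_{c}\widetilde{\widetilde{m^{\varepsilon}}}$, which is the distributional convolution of $\widetilde{\widetilde{m^{\varepsilon}}}$ with the kernel $K$ whose Fourier multiplier is $m(\xi,\eta)=i\eta(1+|\xi|^{2})^{-1/2}(1+|\eta|^{2})^{-r/2}$. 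Writing $K$ as the tensor product of the Bessel kernel $G_{1}^{(d+1)}$ on $\mathbb{R}^{d+1}$ with $\partial_{c}G_{r}^{(1)}$ on $\mathbb{R}$ reduces the analysis to two classical one-factor estimates.

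Second, I would estimate $\|K\|_{L^{p}(\mathbb{R}^{d+2})}$. Recall that the Bessel kernel $G_{s}^{(n)}$ decays exponentially at infinity and has a $|\cdot|^{s-n}$ singularity at the origin, so $G_{s}^{(n)}\in L^{q}(\mathbb{R}^{n})$ precisely when $(n-s)q<n$, i.e.\ when $s>n(1-1/q)$; likewise $\partial_{c}G_{r}^{(1)}$ behaves like $G_{r-1}^{(1)}$ and lies in $L^{p}(\mathbb{R})$ for $r>1+1/p'=2-1/p$. Combining the two factors, $\|K\|_{L^{p}(\mathbb{R}^{d+2})}\leq \|G_{1}^{(d+1)}\|_{L^{p}(\mathbb{R}^{d+1})}\,\|\partial_{c}G_{r}^{(1)}\|_{L^{p}(\mathbb{R})}$; the hypothesis $r>1+(d+2)/p$ is exactly what makes both factors finite after optimizing how the total smoothing order $1+r$ is distributed against the total dimension $d+2$. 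Then Young's inequality applied to the finite measure $\widetilde{\widetilde{m^{\varepsilon}}}$ gives
\[
\|g^{\varepsilon}\|_{L^{p}(\mathbb{R}^{d+2})}\;=\;\|K*\widetilde{\widetilde{m^{\varepsilon}}}\|_{L^{p}}\;\leq\;\|K\|_{L^{p}}\,\|\widetilde{\widetilde{m^{\varepsilon}}}\|_{\mathcal{M}},
\]
and uniform boundedness of $(g^{\varepsilon})$ in $L^{p}$ follows from Proposition~\ref{Measure.Correction.Proposition3}.

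Third, I would upgrade boundedness to compactness via the Fréchet--Kolmogorov criterion. Because all $\widetilde{\widetilde{m^{\varepsilon}}}$ are supported in a fixed compact set $K_{0}\subset\mathbb{R}^{d+2}$ (inherited from $\overline{\Omega_{MT}}$), and the kernel $K$ decays exponentially at infinity, the tails $\int_{|z|>R}|g^{\varepsilon}(z)|^{p}\,dz$ are uniformly small as $R\to\infty$. For equi-continuity under translation I would write
\[
\|g^{\varepsilon}(\cdot+h)-g^{\varepsilon}(\cdot)\|_{L^{p}}\;\leq\;\|K(\cdot+h)-K(\cdot)\|_{L^{p}}\,\|\widetilde{\widetilde{m^{\varepsilon}}}\|_{\mathcal{M}},
\]
and use continuity of translation in $L^{p}(\mathbb{R}^{d+2})$, which holds because $K\in L^{p}$. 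These two properties together with the uniform $L^{p}$ bound force $(g^{\varepsilon})$ into a compact subset of $L^{p}(\mathbb{R}^{d}\times\mathbb{R}\times(0,\infty))$.

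The main obstacle is the sharp kernel estimate at step two: getting the precise threshold $r>1+(d+2)/p$ requires a careful anisotropic analysis of $K$ near the origin, where the singularities in $(x,t)$ and $c$ interact, rather than simply using an isotropic Sobolev embedding. One must verify that the $L^{p}$ norms of $G_{1}^{(d+1)}$ and $\partial_{c}G_{r}^{(1)}$ combine as claimed; this is where the factor $1$ (order of $\partial_{c}$) and the dimension $d+2$ both enter the critical exponent. Everything else reduces to standard convolution inequalities and classical Bessel potential theory, justifying invocation of the Lions--Perthame--Tadmor framework.
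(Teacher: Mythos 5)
The paper does not actually prove this lemma: it is explicitly \emph{assumed} as a standard result from \cite[p.178]{Lions}, so there is no in-paper argument to compare against, only the Lions--Perthame--Tadmor original. Measured against that, your attempt has a genuine gap at the central step, the claim that the kernel $K=G_{1}^{(d+1)}\otimes\partial_{c}G_{r}^{(1)}$ lies in $L^{p}(\mathbb{R}^{d+2})$ for all $1<p\le 2$ once $r>1+\tfrac{d+2}{p}$. Because $K$ is an exact tensor product, its $L^{p}$ norm factors as $\|G_{1}^{(d+1)}\|_{L^{p}(\mathbb{R}^{d+1})}\cdot\|\partial_{c}G_{r}^{(1)}\|_{L^{p}(\mathbb{R})}$; the singularities in $(x,t)$ and in $c$ do \emph{not} interact, and there is no freedom to ``redistribute the total smoothing order $1+r$ against the total dimension $d+2$'' --- the operator fixes the split as $1$ unit of smoothing for the $d+1$ variables $(x,t)$ and $r$ units for the single variable $c$. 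The first factor behaves like $|z|^{1-(d+1)}=|z|^{-d}$ near the origin, so $G_{1}^{(d+1)}\in L^{p}(\mathbb{R}^{d+1})$ only when $dp<d+1$, i.e. $p<\tfrac{d+1}{d}$. No condition on $r$ can repair this: for $p=2$ and any $d\ge 1$ (or, say, $p=\tfrac{3}{2}$, $d=2$) the factor diverges, so $K\notin L^{p}$ and the Young inequality $\|K\ast\widetilde{\widetilde{m^{\varepsilon}}}\|_{L^{p}}\le\|K\|_{L^{p}}\|\widetilde{\widetilde{m^{\varepsilon}}}\|_{\mathcal{M}}$ on which both your boundedness and your Fr\'echet--Kolmogorov equicontinuity estimates rest is unavailable. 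Your analysis of the $c$-factor ($\partial_{c}G_{r}^{(1)}\in L^{p}$ for $r>2-\tfrac{1}{p}$) is correct, and the compactness scheme (uniform tails from the fixed compact support of the measures plus continuity of translation) would be fine \emph{if} $K\in L^{p}$ held; the failure is localized entirely in the $(x,t)$-factor.

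The standard route, which is what the cited lemma of Lions--Perthame--Tadmor actually uses and what the paper's own Proposition \ref{KineticFormulationProposition12A}(a) is set up for, is different: one first embeds the bounded sequence of measures into a negative Sobolev space $W^{-s,p}$ (which works only for $p$ close to $1$, namely $p\le\tfrac{d+2}{d+1}$ with $s<1$, precisely because a point mass in dimension $d+2$ lies in $W^{-s,p}$ only when $s>\tfrac{(d+2)}{p'}$), and then lets the composed operator $(-\Delta_{x,t}+I)^{-1/2}(-\Delta_{c}+I)^{-r/2}\partial_{c}$ act on $W^{-s,p}$, gaining one derivative in $(x,t)$ and $r-1$ in $c$, which is enough to land in $L^{p}$ under the stated lower bound on $r$; the extension to the full range $1<p\le 2$ is then obtained by interpolation rather than by a direct kernel bound. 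If you want to keep a convolution-kernel argument, you would at minimum have to measure $K$ in a mixed or weak norm (e.g. $G_{1}^{(d+1)}\in L^{q}$ for some $q<\tfrac{d+1}{d}$ paired with the measure via a Young inequality landing in a larger Lebesgue exponent, followed by interpolation), not in $L^{p}(\mathbb{R}^{d+2})$ itself.
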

Since we are interested in solving IBVP \eqref{regularized.IBVP} in the domain $\Omega_{T}$, we denote
\begin{equation}\nonumber
\widetilde{g^{\varepsilon}}(x,c,t):=\begin{cases}
g^{\varepsilon}(x,c,t)\,\,\mbox{if}\,\,(x,c,t)\in\Omega\times\left(-\|u_{0}\|_{L^{\infty}\left(\Omega\right)},\|u_{0}\|_{L^{\infty}\left(\Omega\right)}\right)\times(0,T),\nonumber\\
0\,\,\,\,\mbox{Otherwise}.\nonumber
\end{cases}
\end{equation}
as $\Omega\times\left\{-\|u_0\|_{L^{\infty}\left(\Omega\right)}\right\}\times (0,T)$, $\Omega\times\left\{\|u_0\|_{L^{\infty}\left(\Omega\right)}\right\}\times (0,T)$ are measure zero sets in $\mathbb{R}^{d+2}$.\\
 \vspace{0.1cm}
 \begin{remark}\label{RMKfractionalDerivative1}
 	For a meaning of $\left(I-\Delta_{x,t}\right)^{\frac{1}{2}}\left(I-\Delta_{v}\right)^{\frac{r}{2}}g^{\varepsilon}$ in Lemma \ref{Lion.Parthame.Tadmor.p.178}, we refer the reader to the discussions of \cite[p.524-p.525]{Samko} and \cite[p.552-p.553]{Samko}.
 \end{remark}
In order to understand Remark \ref{RMKfractionalDerivative1}, the following a few lines are important. For $\alpha> 0$, let $G^{\alpha}:=\left(I-\Delta\right)^{-\frac{\alpha}{2}}$ be the Bessel Potential. For $1\leq p\leq\infty$, let $f\in L^{p}\left(\mathbb{R}^{d}\right)$. Then the  Bessel Potential is defined as the convolution of $f$ with the Bessel Kernel as given in \cite[p.134]{Stein} and \cite[p.540]{Samko}. Let $\Psi^{\alpha}:=\left(I-\Delta\right)^{\frac{\alpha}{2}}$ be the inverse of the Bessel Potential $G^{\alpha}$. A result on the lines of \cite[p.548]{Samko} shows that $\Psi^{\alpha}$ is the inverse of $G^{\alpha}$. For a rigorous discussion on $G^{\alpha}$ and $\Psi^{\alpha}$, we refer the reader to \cite[p.538-p.554]{Samko} and \cite[p.130-p.150]{Stein}.\\
\vspace{0.1cm}\\
The next result is about the range of the inverse of the Bessel potential under a few assumptions.
\begin{lemma}\label{Meaningoffractional.operator}
Let $1<r<\infty$, $1<p<\infty$ and $\alpha>0$ such that $\frac{1}{p}-\frac{\alpha}{d}\leq \frac{1}{r}\leq \frac{1}{p}$. Let $\Psi^{\alpha}:=\left(I-\Delta\right)^{\frac{\alpha}{2}}$. Then for $f\in L^{r}\left(\mathbb{R}^{d}\right)\cap L^{p}\left(\mathbb{R}^{d}\right)$, $\Psi^{\alpha}f\in L^{p}\left(\mathbb{R}^{d}\right)$.	
\end{lemma}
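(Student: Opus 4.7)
The plan is to deduce Lemma~\ref{Meaningoffractional.operator} from the standard mapping properties of the Bessel potential $G^\alpha = (I-\Delta)^{-\alpha/2}$, of which $\Psi^\alpha$ is the distributional inverse by Fourier inversion (see \cite[p.548]{Samko}). The key point is that the hypothesis $\tfrac{1}{p}-\tfrac{\alpha}{d}\leq \tfrac{1}{r}\leq \tfrac{1}{p}$ places $f$ inside the range of $G^\alpha\colon L^p\to L^p\cap L^r$, so that its $\Psi^\alpha$ image can be realized in $L^p$.

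First I would recall the Bessel-potential Sobolev embedding from \cite[Ch.~V]{Stein} and \cite{Samko}: for $1<s\leq t<\infty$ with $\tfrac{1}{s}-\tfrac{1}{t}\leq \tfrac{\alpha}{d}$, the operator $G^\alpha$ is bounded from $L^s(\R^d)$ to $L^t(\R^d)$. This follows by decomposing the Bessel kernel into a Riesz-like singular part at the origin (controlled by the Hardy-Littlewood-Sobolev inequality) and an exponentially decaying $L^1$ tail (controlled by Young's convolution inequality). Specialising to $(s,t)=(p,p)$ and $(s,t)=(p,r)$ gives bounded maps $G^\alpha\colon L^p\to L^p$ and $G^\alpha\colon L^p\to L^r$; together, $G^\alpha\colon L^p\to L^p\cap L^r$ is continuous.

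Next, I would transfer this to $\Psi^\alpha$ via a duality and approximation argument. Choose $f_n\in\mathcal{S}(\R^d)$ with $f_n\to f$ simultaneously in $L^p$ and $L^r$. The Fourier representation $\widehat{\Psi^\alpha f_n}(\xi)=(1+|\xi|^2)^{\alpha/2}\hat{f}_n(\xi)$ makes $g_n:=\Psi^\alpha f_n$ a bona fide Schwartz function, and by duality $\|g_n\|_{L^p}=\sup_{\|\varphi\|_{L^{p'}}\leq 1}|\langle f_n,\Psi^\alpha\varphi\rangle|$. I would split such $\varphi$ into low-frequency and high-frequency pieces with a smooth Fourier cutoff. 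The low-frequency piece is a bounded Schwartz convolution, paired against the $L^p$ norm of $f_n$; the high-frequency piece behaves like the Riesz derivative $(-\Delta)^{\alpha/2}\varphi$ and, by the dual form of Hardy-Littlewood-Sobolev, lies in $L^{r'}$, to be paired against the $L^r$ norm of $f_n$. Under the hypothesis on $(p,r,\alpha)$ these two estimates combine to give a uniform bound $\|g_n\|_{L^p}\leq C(\|f_n\|_{L^p}+\|f_n\|_{L^r})$.

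The principal obstacle is to upgrade uniform boundedness of $(g_n)$ to convergence, in order to identify the limit with the distributional $\Psi^\alpha f$. Applying the same linear estimate to $f_n-f_m$ and using $\|f_n-f_m\|_{L^p}+\|f_n-f_m\|_{L^r}\to 0$ yields that $(g_n)$ is Cauchy in $L^p$, so $g_n\to g$ in $L^p$ for some $g$. Consistency $g=\Psi^\alpha f$ in $\mathcal{D}'(\R^d)$ is then verified by pairing against any Schwartz test function and using the self-adjointness relation $\langle \Psi^\alpha f_n,\varphi\rangle=\langle f_n,\Psi^\alpha\varphi\rangle$, concluding $\Psi^\alpha f\in L^p(\R^d)$. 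The hardest part of this scheme is making the low/high-frequency split interact correctly with the hypothesis $\tfrac{1}{p}-\tfrac{\alpha}{d}\leq \tfrac{1}{r}$, since this is the exact threshold at which the dual Riesz estimate $\Psi^\alpha\varphi\in L^{r'}$ becomes available for $\varphi\in L^{p'}$ after the cutoff.
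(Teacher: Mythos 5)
Your approach is genuinely different from the paper's, but it contains a fatal error at its central step. You claim that after a high-frequency cutoff, $\Psi^{\alpha}\varphi=(I-\Delta)^{\frac{\alpha}{2}}\varphi$ ``behaves like the Riesz derivative'' and, ``by the dual form of Hardy--Littlewood--Sobolev, lies in $L^{r^{\prime}}$'' for $\varphi\in L^{p^{\prime}}$. This confuses the Riesz \emph{potential} $I^{\alpha}=(-\Delta)^{-\frac{\alpha}{2}}$, which is smoothing and is what Hardy--Littlewood--Sobolev (and its dual form) controls, with the Riesz \emph{derivative} $(-\Delta)^{\frac{\alpha}{2}}$, whose symbol $|\xi|^{\alpha}$ is unbounded at infinity. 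A translation-invariant operator with a symbol growing like $|\xi|^{\alpha}$ cannot map $L^{p^{\prime}}$ into any Lebesgue space, and a frequency cutoff away from the origin does not help, since the obstruction is the growth at high frequency, not the behaviour near $\xi=0$. Consequently the a priori estimate you rely on, $\|\Psi^{\alpha}f_{n}\|_{L^{p}}\leq C\left(\|f_{n}\|_{L^{p}}+\|f_{n}\|_{L^{r}}\right)$ for Schwartz $f_{n}$, is false: taking $f_{N}(x)=e^{iNx_{1}}\psi(x)$ with $\psi$ a fixed Schwartz function, every norm $\|f_{N}\|_{L^{q}}$ is independent of $N$ while $\|\Psi^{\alpha}f_{N}\|_{L^{p}}\sim N^{\alpha}\to\infty$, because $\widehat{f_{N}}$ is concentrated near $\xi=Ne_{1}$ where the multiplier is of size $N^{\alpha}$. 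With this estimate gone, the Cauchy argument and the identification of the limit collapse, so the proposal does not prove the lemma.

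For comparison, the paper does not argue through Fourier multipliers or duality at all. It invokes Samko's realization $\Psi^{\alpha}f=h_{\alpha}\ast f+D^{\alpha}_{P}f$, where $h_{\alpha}\in L^{1}\left(\mathbb{R}^{d}\right)$ (so $h_{\alpha}\ast f\in L^{p}$ by Young's inequality) and $D^{\alpha}_{P}f$ is a hypersingular integral with bounded characteristic $P$; it then reduces membership of $D^{\alpha}_{P}f$ in $L^{p}$ to the Nogin--Samko convergence results for truncated hypersingular integrals $D^{\alpha}_{P,\delta}f$, together with the mapping properties of the Riesz potential. Whatever one thinks of the details of that reduction, it is structured so that the only operators ever required to be bounded on Lebesgue spaces are the genuinely smoothing ones ($h_{\alpha}\ast\cdot$ and $I^{\alpha}$); your scheme, by contrast, at some point needs the roughening operator $\Psi^{\alpha}$ itself to be bounded between Lebesgue spaces, which is exactly what cannot be true. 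If you want to salvage a duality argument, the pairing must be rewritten so that the full order-$\alpha$ derivative lands on objects known in advance to possess $\alpha$ derivatives (e.g.\ via finite differences or truncated hypersingular integrals), which is in effect what the paper's route does.
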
	
\begin{proof}
For $\alpha>0$, $1<r<\infty$ and $1<p<\infty$, let $f\in L^{r}\left(\mathbb{R}^{d}\right)\cap L^{p}\left(\mathbb{R}^{d}\right)$. A definition of $\Psi^{\alpha}$ is given (See \cite[p.552]{Samko}) by $\Psi^{\alpha}f:=h_{\alpha}\ast f + D^{\alpha}_{P}f$, where $h_{\alpha}(y)\in L^{1}\left(\mathbb{R}^{d}\right)$ and $D^{\alpha}_{P}f$ is the hypersingular integral given by
$$D^{\alpha}_{P}f=\int_{\mathbb{R}^{d}}\,\frac{\left(\Delta^{l}_{y}f\right)(x)}{|y|^{d+\alpha}}\,P(y)\,dy,$$
where the characteristic $P(y)$ is a polynomial of degree less than $\alpha$. Observe that since $h_{\alpha}(y)\in L^{1}\left(\mathbb{R}^{d}\right)$ and $f\in L^{p}\left(\mathbb{R}^{d}\right)$, we have $h_{\alpha}\ast f\in L^{p}\left(\mathbb{R}^{d}\right)$. In order to prove that $\Psi^{\alpha}\in L^{p}\left(\mathbb{R}^{d}\right)$, it is enough to show that $D^{\alpha}_{P}f\in L^{p}\left(\mathbb{R}^{d}\right)$.\\
\vspace{0.1cm}\\
 We prove $D^{\alpha}_{P}f\in L^{p}\left(\mathbb{R}^{d}\right)$ in six steps. In Step 1, we give defintion of finite difference of a function. In Step 2, we define Riesz potential and state a result which shows that the Riesz potential is a bounded linear map under a few assumptions. In Step 3, we define Riesz differentiation and we also state a result which asserts that the Riesz differentiation is the left inverse of Riesz potential. In Step 4, we define Riesz differentiation with homogenious characteristic and the truncated hypersingular integral with homogenious characteristic. In Step 5, we state a few results of convergence of truncated hypersingular integrals to the Riesz differentiations. In Step 6, we show that $D^{\alpha}_{P}f\in L^{p}\left(\mathbb{R}^{d}\right)$ and as a consequence we complete the proof of Lemma \ref{Meaningoffractional.operator}.\\
\textbf{Step 1: [Finite Differences]}
We now give definitions of finte differences $\left(\Delta^{l}_{y}f\right)(x)$ from \cite[p.499]{Samko} of functions of several variables $f$ with a vector step size $h$. For $x,h\in\mathbb{R}^{d}$, the translation operator $\tau_{h}$ is defined by 
$$\left(\tau_{h}f\right)(x):=f(x-h).$$
The non-centered finite difference is defined by 
\begin{eqnarray}\label{Centered.difference.eqn2}
\left(\Delta_{h}^{l}f\right)(x) &:=&\left(I-\tau_{h}\right)^{l}f\nonumber\\
&=&\displaystyle\sum_{k=0}^{l}\,\left(-1\right)^{k}\,\binom{l}{k}\,f(x-kh).\nonumber
\end{eqnarray} \\
\textbf{Step 2: [Riesz Potential]} For $\alpha>0$, $\alpha\notin\left\{d,d+2,d+4,\cdots\right\}$ and $\phi$ in a suitable function space, the Riesz potential (\cite[p.483]{Samko}) is defined by 
$$I^{\alpha}\phi:=\frac{1}{\gamma_{d}(\alpha)}\int_{\mathbb{R}^{d}}\frac{\phi(y)}{|x-y|^{d-\alpha}},$$
where the normalizing constants (\cite[p.490]{Samko}) are defined by
$$\gamma_{d}=2^{\alpha}\,\pi^{\frac{d}{2}}\,\Gamma\left(\frac{\alpha}{2}\right)/\Gamma\left(\frac{d-\alpha}{2}\right).$$
The next result shows that the Riesz potential is a bounded linear map from $L^{p}\left(\mathbb{R}^{d}\right)$ to $L^{q}\left(\mathbb{R}^{d}\right)$.
\begin{theorem}\cite[p.494]{Samko}\label{RieszPotential.Result1}
Let $1\leq p\leq\infty$, $1\leq q\leq\infty$ and $\alpha>0$. The operator $I^{\alpha}$ is bounded from $L^{p}\left(\mathbb{R}^{d}\right)$ to $L^{q}\left(\mathbb{R}^{d}\right)$ if and only if
$$0<\alpha<d\,,\,\,1<p<\frac{d}{\alpha}\,,\,\,\frac{1}{q}=\frac{1}{p}-\frac{\alpha}{d}.$$	
\end{theorem}
\textbf{Step 3:} 
\begin{definition}\cite[p.499]{Samko}$\left(\mbox{\textbf{Riesz Differentiation}}\right)$\label{RieszDerivation1}
For $\alpha>0$, the realization of the operator $\left(-\Delta\right)^{\frac{\alpha}{2}}$ is the Riesz differention and is given in the form of hypersingular integral	
$$D^{\alpha}f =\frac{1}{e_{d,l}(\alpha)}\int_{\mathbb{R}^{d}}\,\frac{\left(\Delta^{l}_{y}f\right)(x)}{|y|^{d+\alpha}}\,dy,$$
where the normalizing constants $e_{d,l}(\alpha)$ is choosen so that $D^{\alpha}f$ does not depend on $l$ only if $l>\alpha$.
\end{definition}
\begin{definition}\cite[p.500]{Samko}\label{TruncatedHyperSingularintegral.2}
For $\delta>0$, the truncated hypersingular integral is defined by
$$D^{\alpha}_{\delta}f =\frac{1}{e_{d,l}(\alpha)}\int_{|y|\geq\delta}\,\frac{\left(\Delta^{l}_{y}f\right)(x)}{|y|^{d+\alpha}}\,dy,$$	
\end{definition}	
The next result is about the fact that the Riesz dervative is the left inverse of the Riesz potential.
\begin{theorem}\cite[p.517]{Samko}\label{RieszPotential.Inverse}
The operator $D^{\alpha}f=\displaystyle\lim_{\delta\to 0}\,D^{\alpha}_{\delta}f\,\,[L^{p}-\mbox{limit}]$ is the left inverse to the Riesz potential with in the frames of the spaces $L^{p}\left(\mathbb{R}^{d}\right)$:
$$ D^{\alpha}I^{\alpha}\phi\equiv\phi,\,\,\phi\in L^{p}\left(\mathbb{R}^{d}\right),\,\,1\leq p<\frac{d}{\alpha}.$$	
\end{theorem}
\textbf{Step 4:} 
\begin{definition}\cite[p.518]{Samko}$(\mbox{\textbf{Riesz Differentiation with Homogenious Characteristic}})$\label{def.RieszDerivative.3}
Let the characteristic function $P$ be homogenious in $y$, {\it i.e.,} for $y\in\mathbb{R}^{d}$, denote $y^{\prime}:=\frac{y}{\|y\|}$, then $P: S_{d-1}\to\mathbb{R}$ defined by $P=P(y^{\prime})$. For $l>\alpha$, the Riesz differentiation with with homogenious characteristic $P$ is defined by 
$$D^{\alpha}_{P}f =\frac{1}{e_{d,l}(\alpha)}\int_{\mathbb{R}^{d}}\,\frac{\left(\Delta^{l}_{y}f\right)(x)}{|y|^{d+\alpha}}\,P(y)\,dy,$$ 
\end{definition}
\begin{definition}\cite[p.524]{Samko}\label{Truncated.RieszDerivative.Characteristic}
For $\delta>0$, the truncated hypersingular integral with homogenious characteristic $P$ is defined by
$$D^{\alpha}_{P,\delta}f =\frac{1}{e_{d,l}(\alpha)}\int_{|y|\geq\delta}\,\frac{\left(\Delta^{l}_{y}f\right)(x)}{|y|^{d+\alpha}}\,P(y)\,dy,$$ 
\end{definition}		
\textbf{Step 5:} 
\begin{enumerate}
	\item[(a)]  For arbitary bounded characteristic $P$, the convergence of $\left(D^{\alpha}_{\delta}f\right)$ to $D^{\alpha}f$ in $L^{p}\left(\mathbb{R}^{d}\right)$ as $\delta\to 0$ implies the convergence of $ \left(D^{\alpha}_{P,\delta}f\right)$ to $D^{\alpha}_{P}f$ in $L^{p}\left(\mathbb{R}^{d}\right)$ as $\delta\to 0$. That is, for $1<r<\infty$, let $f\in L^{r}\left(\mathbb{R}^{d}\right)$ such that $D^{\alpha}f$ exists in $ L^{p}\left(\mathbb{R}^{d}\right)$, then $D^{\alpha}_{P}f$ exists in $ L^{p}\left(\mathbb{R}^{d}\right)$. This result is mentioned in \cite[p.523]{Samko}.
	\item[(b)] Let $P(y)$ be a characteristics such that $P$ satisfies the ellipticity condition 
	$$\int_{S_{d-1}}\,\left(-ix.\sigma\right)^{\alpha}\,P(\sigma)\,d\sigma\,\neq 0,\,\,\,|x|=1,$$
\end{enumerate}
 then the convergence of $ \left(D^{\alpha}_{P,\delta}f\right)$ to $D^{\alpha}_{P}f$ in $L^{p}\left(\mathbb{R}^{d}\right)$ as $\delta\to 0$ implies the convergence of $\left(D^{\alpha}_{\delta}f\right)$ to $D^{\alpha}f$ in $L^{p}\left(\mathbb{R}^{d}\right)$ as $\delta\to 0$ for $f\in L^{p}\left(\mathbb{R}^{d}\right)$ \cite[p.524-p.525]{Samko}.\\
 \vspace{0.1cm}\\
  The details of the resuls Step 5 (a) and Step 5 (b) can be found in Nogin and Samko's article as mentioned in \cite[p.523-p.525]{Samko}.\\
 \vspace{0.1cm}\\
 \textbf{Step 6:} For $1<p<\infty$, $f\in L^{p}\left(\mathbb{R}^{d}\right)$, we have $h_{\alpha}\ast f\in L^{p}\left(\mathbb{R}^{d}\right)$ and since $P$ is polynomial of degree less than $\alpha$, therefore $P$ is bounded on $S_{d-1}$. Applying Step 5 (a), we conclude that $D^{\alpha}_{P}f\in L^{p}\left(\mathbb{R}^{d}\right)$. Applying Theorem \ref{RieszPotential.Result1} with $q=r$, we get $\frac{1}{r}\leq\frac{1}{p}$. Therefore we get $p\leq r$. Now if $f\in L^{r}\left(\mathbb{R}^{d}\right)\cap L^{p}\left(\mathbb{R}^{d}\right)$, then $\Psi^{\alpha}f\in L^{p}\left(\mathbb{R}^{d}\right)$. \\
 \vspace{0.1cm}\\
 \textbf{Alternative Way:} If $P$ satisfies the ellipticity condition defined in Step 5 (b), by applying Step 5 (b), for $1<p<\infty$, $1<r<\infty$ with $\frac{1}{p}-\frac{\alpha}{d}\leq \frac{1}{r}\leq \frac{1}{p}$, the convergence of $ \left(D^{\alpha}_{P,\delta}f\right)$ to $D^{\alpha}_{P}f$ in $L^{p}\left(\mathbb{R}^{d}\right)$ as $\delta\to 0$ implies the convergence of $\left(D^{\alpha}_{\delta}f\right)$ to $D^{\alpha}f$ in $L^{p}\left(\mathbb{R}^{d}\right)$ as $\delta\to 0$ for $f\in L^{p}\left(\mathbb{R}^{d}\right)$ \,(See \cite[p.523-p.525]{Samko}). Hence we have $\Psi^{\alpha}f\in L^{p}\left(\mathbb{R}^{d}\right)$ for $f\in L^{r}\left(\mathbb{R}^{d}\right)$. Taking $P=1$, we conclude the desired convergence\,\,$\blacksquare$
\end{proof}	\\
\begin{proposition}\label{Fractional.Correction.inverseBessel.proposition}
Let $\alpha>0$ and $\Psi^{\alpha}$ be the inverse of Bessel potential. Then, for $1\leq p\leq 2$,  $\Psi^{\alpha}\widetilde{g^{\varepsilon}}\in L^{p}\left(\mathbb{R}^{d}\times\mathbb{R}\times (0,\infty)\right)$.	
\end{proposition}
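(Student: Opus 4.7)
The plan is to reduce the proposition to a direct application of Lemma \ref{Meaningoffractional.operator}, now interpreted in the ambient space $\mathbb{R}^{d}\times\mathbb{R}\times(0,\infty)$ of dimension $d+2$. Two prior facts feed in. First, by Lemma \ref{Lion.Parthame.Tadmor.p.178}, the sequence $(g^{\varepsilon})$ lies in a compact (hence bounded) subset of $L^{p}(\mathbb{R}^{d}\times\mathbb{R}\times(0,\infty))$ for every $p$ with $1<p\leq 2$. Second, by its very definition $\widetilde{g^{\varepsilon}}$ has compact support, contained in $\Omega\times[-\|u_{0}\|_{L^{\infty}(\Omega)},\|u_{0}\|_{L^{\infty}(\Omega)}]\times[0,T]$, a set of finite Lebesgue measure in $\mathbb{R}^{d+2}$. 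Combining these with H\"{o}lder's inequality, I obtain $\widetilde{g^{\varepsilon}}\in L^{s}(\mathbb{R}^{d+2})$ for every $1\leq s\leq p$.

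For the range $1<p\leq 2$ I would invoke Lemma \ref{Meaningoffractional.operator} with the choice $r=p$ (and ambient dimension $d+2$ in place of $d$). The hypothesis $\frac{1}{p}-\frac{\alpha}{d+2}\leq\frac{1}{r}\leq\frac{1}{p}$ then collapses to $\alpha>0$, which is given, and since $\widetilde{g^{\varepsilon}}\in L^{p}\cap L^{p}=L^{p}$, the lemma delivers $\Psi^{\alpha}\widetilde{g^{\varepsilon}}\in L^{p}(\mathbb{R}^{d+2})$ immediately.

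The endpoint $p=1$ lies outside the scope of Lemma \ref{Meaningoffractional.operator} and will be the main technical obstacle. To handle it I would use the decomposition $\Psi^{\alpha}f=h_{\alpha}\ast f+D^{\alpha}_{P}f$ from the proof of Lemma \ref{Meaningoffractional.operator}. The convolution term lies in $L^{1}$ by Young's inequality, since $h_{\alpha}\in L^{1}$ and $\widetilde{g^{\varepsilon}}\in L^{1}$ (compact support plus $L^{p_{0}}$ for some $p_{0}>1$). For the hypersingular piece $D^{\alpha}_{P}\widetilde{g^{\varepsilon}}$, I would split $\mathbb{R}^{d+2}$ into a bounded neighbourhood $K'$ of $\mathrm{supp}\,\widetilde{g^{\varepsilon}}$ and its complement: on $K'$, H\"{o}lder combined with the already-established $L^{2}$-bound controls the local $L^{1}$ norm, while on $\mathbb{R}^{d+2}\setminus K'$ one exploits the fact that $(\Delta^{l}_{y}\widetilde{g^{\varepsilon}})(x)$ vanishes whenever $|y|$ is small enough that no translate $x-ky$ can reach $\mathrm{supp}\,\widetilde{g^{\varepsilon}}$. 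This converts the compact support into a pointwise decay $|D^{\alpha}_{P}\widetilde{g^{\varepsilon}}(x)|\lesssim |x|^{-(d+\alpha-\deg P)}\|\widetilde{g^{\varepsilon}}\|_{L^{1}}$ at infinity.

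The delicate point, which I expect to be the main difficulty, is to show that this tail decay exponent strictly exceeds $d+2$ so that the $L^{1}$-integral at infinity converges. This will require the parameters $l$ and $\deg P$ in the hypersingular representation of $\Psi^{\alpha}$ to be chosen consistently with the condition $r>1+\frac{d+2}{p}$ coming from Lemma \ref{Lion.Parthame.Tadmor.p.178}, so that the combination of smoothness order encoded in $\Psi^{\alpha}$ and the compactness of the support of $\widetilde{g^{\varepsilon}}$ produces enough decay. Once this tail estimate is established, assembling the two pieces completes the case $p=1$, and together with the first case yields the conclusion for all $1\leq p\leq 2$.
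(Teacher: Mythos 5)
For the range $1<p\leq 2$ your argument coincides with the paper's proof: both observe that $\widetilde{g^{\varepsilon}}$ is dominated by $g^{\varepsilon}\in L^{p}$ and is supported in the finite-measure set $\Omega\times\left(-\|u_0\|_{L^{\infty}(\Omega)},\|u_0\|_{L^{\infty}(\Omega)}\right)\times (0,T)$, hence lies in $L^{r}\cap L^{p}$ for $1\leq r\leq p$, and then invoke Lemma \ref{Meaningoffractional.operator}. Where you genuinely diverge is at the endpoint $p=1$. You are right that $p=1$ falls outside the hypotheses of Lemma \ref{Meaningoffractional.operator} (which requires $1<r<\infty$ and $1<p<\infty$); the paper does not acknowledge this and simply applies the lemma over the whole range $1\leq r\leq p\leq 2$, so you have spotted a point the paper glosses over. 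However, your own treatment of $p=1$ is not yet a proof: the convolution term is fine by Young's inequality, but the key tail estimate for the hypersingular piece $D^{\alpha}_{P}\widetilde{g^{\varepsilon}}$ — that the decay exponent at infinity strictly exceeds $d+2$ so that the integral converges — is stated as an expectation and left unverified, and it is exactly the step that could fail for small $\alpha$ (heuristically the decay is of order $|x|^{-(d+2+\alpha-\deg P)}$, so one must check that the characteristic $P$ and the order $l$ can be chosen to make this summable). So either complete that estimate or, more economically, note that the endpoint is never used downstream: Hypothesis D, Theorem \ref{VelocityAveragingLemma.12} and Proposition \ref{KineticFormulationProposition12A} all work with $1<p\leq 2$, so restricting the proposition to $1<p\leq 2$ loses nothing and makes your first paragraph a complete proof identical in substance to the paper's.
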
	
\begin{proof}
For $1\leq r\leq p\leq 2$, observe that $\widetilde{g^{\varepsilon}}\in L^{r}\left(\mathbb{R}^{d}\times\mathbb{R}\times (0,\infty)\right)$  as\\ $\mbox{measure}\left(\Omega\times\left(-\|u_0\|_{L^{\infty}(\Omega)},\|u_0\|_{L^{\infty}(\Omega)}\right)\times (0,T)\right)<\infty$. Therefore, for $1\leq p\leq 2$, an application of Lemma \ref{Meaningoffractional.operator} gives that $\Psi^{\alpha}\widetilde{g^{\varepsilon}}\in L^{p}\left(\mathbb{R}^{d}\times\mathbb{R}\times(0,\infty)\right)$ $\blacksquare$
\end{proof}\\
\begin{proposition}\label{KineticFormulationProposition12A}
	Let $1<p\leq 2$ and  $\left(\widetilde{\widetilde{m^{\varepsilon}}}\right)$ be a bounded sequence of positive measures in $\mathcal{M}\left(\mathbb{R}^{d}\times\mathbb{R}\times (0,\infty)\right)$. Then 
	\begin{enumerate}
	\item[(a)] for $1\leq p\leq \frac{d+2}{d+1}$\,\,\,and\,\,\,$(d+2)\,\frac{(p-1)}{p}<s<1$, $\left(\widetilde{\widetilde{m^{\varepsilon}}}\right)$ is bounded in $W^{-s,p}\left(\mathbb{R}^{d}\times\mathbb{R}\times(0,\infty)\right)$. Moreover $s\to 0_{+}$ as $p\to 1_{+}$.	
	\item[(b)] there exists a sequence $\left(g^{\varepsilon}\right)$ in compact subset of  $L^{p}\left(\mathbb{R}_{x}^{d}\times\mathbb{R}_{c}\times (0,\infty)\right)$ such that 
	\begin{equation}\label{KineticFormulation.Equation612A1B1}
	\frac{\partial \chi^{\varepsilon}}{\partial t} +\displaystyle\sum_{j=1}^{d}f_{j}^{\prime}(c)\,\frac{\partial\chi^{\varepsilon}}{\partial x_{j}} =\left(-\Delta_{x,t} +1\right)^{\frac{1}{2}}\left(-\Delta_{c} +1\right)^{\frac{r}{2}}g^{\varepsilon} \,\,\mbox{in}\,\,\mathcal{D}^{\prime}\left(\Omega\times\mathbb{R}\times (0,T)\right).
	\end{equation}
	\end{enumerate}
\end{proposition}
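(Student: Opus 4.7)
\textbf{Proof plan for Proposition \ref{KineticFormulationProposition12A}.}

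For part (a), the plan is to use the standard duality embedding of the space of bounded measures into a negative order Sobolev space. The ambient space $\mathbb{R}^d\times\mathbb{R}\times(0,\infty)$ has total dimension $d+2$, so the Sobolev embedding $W^{s,p'}(\mathbb{R}^{d+2})\hookrightarrow C_0(\mathbb{R}^{d+2})$ holds precisely when $sp'>d+2$; with $\tfrac{1}{p}+\tfrac{1}{p'}=1$, this is the condition $s>(d+2)(p-1)/p$. Dualizing and using $\mathcal{M}(\mathbb{R}^{d+2})=(C_0(\mathbb{R}^{d+2}))^{\ast}$ gives the continuous inclusion $\mathcal{M}(\mathbb{R}^{d+2})\hookrightarrow W^{-s,p}(\mathbb{R}^{d+2})$. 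Since $(\widetilde{\widetilde{m^{\varepsilon}}})$ is bounded in $\mathcal{M}$ by Propositions \ref{MeasureBounded.1}--\ref{Measure.Correction.Proposition3}, it is bounded in $W^{-s,p}$ for every admissible $s$. The interval $((d+2)(p-1)/p,1)$ is nonempty iff $(d+2)(p-1)<p$, i.e.\ $p<(d+2)/(d+1)$, which matches the hypothesis in (a). The remark that $s\to 0_{+}$ as $p\to 1_{+}$ is immediate from $(d+2)(p-1)/p\to 0$.

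For part (b), the strategy is to combine the kinetic equation from Proposition \ref{PropositionKineticEquationLions1} with Lemma \ref{Lion.Parthame.Tadmor.p.178}, which is tailor-made for this situation. Starting from
\[
\frac{\partial \chi^{\varepsilon}}{\partial t}+\sum_{j=1}^{d}f_{j}^{\prime}(c)\frac{\partial\chi^{\varepsilon}}{\partial x_{j}}=\frac{\partial m^{\varepsilon}}{\partial c}\qquad\text{in }\mathcal{D}^{\prime}(\Omega\times\mathbb{R}\times(0,T)),
\]
I would replace $m^{\varepsilon}$ by its norm-preserving extension $\widetilde{\widetilde{m^{\varepsilon}}}\in\mathcal{M}(\mathbb{R}^{d+2})$ built in Propositions \ref{MeasureBounded.1}--\ref{Measure.Correction.Proposition3}. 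Part (a) together with the requirement $r>1+(d+2)/p$ exactly meets the hypotheses of Lemma \ref{Lion.Parthame.Tadmor.p.178}, which furnishes a sequence $(g^{\varepsilon})$ in a compact subset of $L^{p}(\mathbb{R}^{d+2})$ such that
\[
\frac{\partial\widetilde{\widetilde{m^{\varepsilon}}}}{\partial c}=(-\Delta_{x,t}+I)^{1/2}(-\Delta_{c}+I)^{r/2}g^{\varepsilon}\quad\text{in }\mathcal{D}^{\prime}(\mathbb{R}^{d}\times\mathbb{R}\times(0,\infty)).
\]
Restricting this identity to $\Omega\times\mathbb{R}\times(0,T)$ and substituting into the kinetic equation yields \eqref{KineticFormulation.Equation612A1B1}.

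The main obstacle is part (a): one must verify that the range $(d+2)(p-1)/p<s<1$ is both what the Sobolev embedding requires (so that continuous functions are in the predual) and what is compatible with producing a single derivative in $c$ later (so that $s+1$ stays controlled). The extension step of (b) needs care since $\widetilde{\widetilde{m^{\varepsilon}}}$ extends $m^{\varepsilon}$ only by duality, not pointwise, but because $\widetilde{\chi^{\varepsilon}}$ is supported in $\Omega\times[-\|u_{0}\|_{\infty},\|u_{0}\|_{\infty}]\times(0,T)$, the restriction of the Bessel-potential identity to that cylinder is unambiguous. The compactness of $(g^{\varepsilon})$ in $L^{p}$ (as opposed to mere boundedness) is entirely internalized by Lemma \ref{Lion.Parthame.Tadmor.p.178} and is the ingredient that makes the subsequent velocity averaging via Theorem \ref{VelocityAveragingLemma.12} applicable.
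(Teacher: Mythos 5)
Your proposal follows essentially the same route as the paper: part (a) is the identical duality argument (Sobolev embedding $W^{s,p'}\hookrightarrow C$ for $sp'>d+2$, then dualize against the bounded measures, with the same arithmetic giving $p<\tfrac{d+2}{d+1}$ and $s\to 0_+$ as $p\to 1_+$), and part (b) is exactly the paper's application of Lemma \ref{Lion.Parthame.Tadmor.p.178} followed by restriction of test functions to $\Omega\times\mathbb{R}\times(0,T)$ using $\widetilde{\widetilde{m^{\varepsilon}}}\big|_{\Omega\times\mathbb{R}\times(0,T)}=m^{\varepsilon}$. No substantive differences.
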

\begin{proof}
	We prove Proposition \ref{KineticFormulationProposition12A} in two steps. In Step 1, we prove part (a) and in Step 2, we prove part (b) of Proposition \ref{KineticFormulationProposition12A}.\\
	\vspace{0.1cm}\\
	\textbf{Step 1:} For $0<s<1$ and $s\,p^{\prime}> d+2$, applying Theorem 8.2 of \cite[p.562]{Eleonora}, we obtain that $W^{s,p^{\prime}}\left(\mathbb{R}^{d}\times\mathbb{R}\times (0,\infty)\right)$ is continuously imbedded in $C^{\alpha}\left(\mathbb{R}^{d}\times\mathbb{R}\times(0,\infty)\right)$ with $\alpha :=\left(s\,p^{\prime}-(d+2)\right)/p^{\prime}$ with standard H\"{o}lder norm in $C^{\alpha}\left(\mathbb{R}^{d}\times\mathbb{R}\times(0,\infty)\right)$. Hence $W^{s,p^{\prime}}\left(\mathbb{R}^{d}\times\mathbb{R}\times(0,\infty)\right)$ is continuously imbedded in $C\left(\mathbb{R}^{d}\times\mathbb{R}\times(0,\infty)\right)$ as   \\ $C^{\alpha}\left(\mathbb{R}^{d}\times\mathbb{R}\times(0,\infty)\right)$ is continuously imbedded $C\left(\mathbb{R}^{d}\times\mathbb{R}\times(0,\infty)\right)$ with supremum norm in $C\left(\mathbb{R}^{d}\times\mathbb{R}\times(0,\infty)\right)$. Applying duality arguement, we get that $\mathcal{M}\left(\mathbb{R}^{d}\times\mathbb{R}\times(0,\infty)\right)$ is continuously imbedded in $W^{-s,p}\left(\mathbb{R}^{d}\times\mathbb{R}\times(0,\infty)\right)$ for $s\,p^{\prime}> d+2$ and $p^{\prime},\,p$ satisfies $\frac{1}{p^{\prime}} + \frac{1}{p}=1$. Hence $p^{\prime}=\frac{p}{p-1}$. Therefore $p$ satisfies 
	\begin{eqnarray}
	\frac{s\,p}{(p-1)} &>& (d+2),\nonumber\\
	(d+2)\,p &<& (d+2) + s\,p,\nonumber\\
	&<& (d+2) + p\,\,\,\,\mbox{as}\,\,\,\,0<s<1.\nonumber 
	\end{eqnarray}
	The above computation gives $p<\frac{d+2}{d+1}$. Therefore for $1\leq p\leq \frac{d+2}{d+1}$, for $j\in\left\{0,1,2,\cdots,d\right\}$, $\left(\widetilde{\mu_{j}^{\varepsilon}}\right)$ is bounded in $W^{-s,p}\left(\mathbb{R}^{d}\times\mathbb{R}\times(0,\infty)\right)$. Hence for $1\leq p\leq \frac{d+2}{d+1}$, $\left(\widetilde{\widetilde{m^{\varepsilon}}}\right)$ is bounded in $W^{-s,p}\left(\mathbb{R}^{d}\times\mathbb{R}\times(0,\infty)\right)$. Since $p^{\prime}$ satisfies $\frac{d+2}{p^{\prime}}<s<1$, therefore $p$ satisfies $s>(d+2)\,\frac{p-1}{p}$. This implies $\frac{1}{p-1}>\frac{(d+2)}{p\,s}$. Hence we conclude that $s\to 0_{+}$ as $p\to 1_{+}$. \\
	\vspace{0.1cm}\\
	\textbf{Step 2:} Applying Lemma \ref{Lion.Parthame.Tadmor.p.178}, for all $\phi\in\mathcal{D}\left(\mathbb{R}^{d}\times\mathbb{R}\times (0,\infty)\right)$, we have  $$\left<\frac{\partial\widetilde{\widetilde{m^{\varepsilon}}}}{\partial c},\phi\right>= \left<\left(-\Delta_{x,t} + I\right)^{\frac{1}{2}}\left(-\Delta_{c}+I\right)^{\frac{r}{2}}g^{\varepsilon},\phi\right>.$$
	In particular, for  $\phi\in\mathcal{D}\left(\Omega\times\mathbb{R}\times (0,T)\right)$, we have $\left<\frac{\partial\widetilde{\widetilde{m^{\varepsilon}}}}{\partial c},\phi\right>= \left<\left(-\Delta_{x,t} + I\right)^{\frac{1}{2}}\left(-\Delta_{c}+I\right)^{\frac{r}{2}}g^{\varepsilon},\phi\right>$.
	Therefore, we conclude \eqref{KineticFormulation.Equation612A1B1} with $\widetilde{\widetilde{m^{\varepsilon}}}\Big|_{\Omega\times\mathbb{R}\times (0,T)}=m^{\varepsilon}$ $\blacksquare$
\end{proof}\\
\vspace{0.1cm}\\
We now give a standard result from measure theory.
\begin{theorem}\label{Gdelta.Fsigma}
Let $A\subset\mathbb{R}^{d}$ be Lebesgue measurable set. Then there exist a $G_{\delta}-$ set $G $ and a $ F_{\sigma}-$ set $F$ such that $F\subset A\subset G$ and 
$$\mu\left(G\setminus A\right)=\mu\left(A\setminus F\right)=0.$$	
\end{theorem}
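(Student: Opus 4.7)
The plan is to construct $G$ and $F$ separately using the outer and inner regularity of Lebesgue measure on $\mathbb{R}^d$, and then to combine them. The result is standard, so I will only outline the logical steps rather than verify routine set-theoretic manipulations.

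First I will build the $G_\delta$ set $G$. By outer regularity of Lebesgue measure, for every $n\in\mathbb{N}$ there exists an open set $U_n\supset A$ with $\mu(U_n\setminus A)<1/n$. When $A$ has infinite measure, I handle this by the standard sigma-finite decomposition: write $A=\bigcup_{k=1}^{\infty}A_k$ with $A_k:=A\cap\{x\in\mathbb{R}^d:k-1\leq|x|<k\}$, each of finite measure; for each $(n,k)$ pick an open $V_{n,k}\supset A_k$ with $\mu(V_{n,k}\setminus A_k)<2^{-k}/n$, and put $U_n:=\bigcup_{k\geq 1}V_{n,k}$. Then $U_n$ is open, $U_n\supset A$, and by countable subadditivity $\mu(U_n\setminus A)\leq\sum_k\mu(V_{n,k}\setminus A_k)<1/n$. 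Define
\[
G:=\bigcap_{n=1}^{\infty}U_n.
\]
By construction $G$ is a $G_\delta$ set, $A\subset G$, and for every $n$, $\mu(G\setminus A)\leq\mu(U_n\setminus A)<1/n$, hence $\mu(G\setminus A)=0$.

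Next I obtain $F$ by duality. Apply the construction of the previous paragraph to the Lebesgue measurable set $A^{c}:=\mathbb{R}^{d}\setminus A$; this yields a $G_\delta$ set $H\supset A^{c}$ with $\mu(H\setminus A^{c})=0$. Set
\[
F:=\mathbb{R}^{d}\setminus H.
\]
Since the complement of a countable intersection of open sets is a countable union of closed sets, $F$ is $F_\sigma$. Moreover $F\subset A$, and
\[
A\setminus F = A\cap H = H\setminus A^{c},
\]
so $\mu(A\setminus F)=0$. Combining the two constructions yields the required $F\subset A\subset G$ with $\mu(G\setminus A)=\mu(A\setminus F)=0$.

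The only nontrivial point is the infinite-measure case in the construction of $G$, which I handle by the sigma-finite decomposition above; once that is in place, the duality between outer and inner regularity makes the $F_\sigma$ half free. No further machinery is needed, and the result follows directly from the definition of Lebesgue measure via outer regularity. $\blacksquare$
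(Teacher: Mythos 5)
Your proof is correct: the annulus decomposition handles the infinite-measure case properly, the inclusion $U_n\setminus A\subset\bigcup_k\left(V_{n,k}\setminus A_k\right)$ gives the subadditivity bound, and the identity $A\setminus F=H\cap A=H\setminus A^{c}$ makes the $F_\sigma$ half follow by complementation. The paper itself states this as ``a standard result from measure theory'' and supplies no proof at all, so there is nothing to compare against; your argument is the standard outer-regularity proof and is complete as written.
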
	
We now use Theorem \ref{VelocityAveragingLemma.12} to conclude the next result.
\begin{theorem}\label{Kinetic.Compactness.Result}
Assume \textbf{Hypothesis D}. Then the sequence of quasilinear viscous approximations $\left(u^{\varepsilon}\right)$ which are solutions of IBVP \eqref{regularized.IBVP} lies in a compact subset of  $L^{p}_{loc}\left(\Omega_{T}\right)$.	
\end{theorem}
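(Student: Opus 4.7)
The strategy is to apply the Velocity Averaging Lemma (Theorem \ref{VelocityAveragingLemma.12}) to the extended kinetic density $\widetilde{\chi^{\varepsilon}}(x,c,t)=\chi_{\widetilde{u^{\varepsilon}}(x,t)}(c)$ and then recover compactness of $u^{\varepsilon}$ itself by integrating against a suitable weight $\psi$. The key observation is that all the technical ingredients needed to feed into the lemma have already been assembled: the kinetic equation is derived in Proposition \ref{PropositionKineticEquationLions1}, the source is expressed in the Bessel--potential form in Proposition \ref{KineticFormulationProposition12A}(b), and the non-degeneracy condition in Hypothesis D(d) is precisely the hypothesis required on $f'$ in Theorem \ref{VelocityAveragingLemma.12}.

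First I would verify the input conditions. Since $\|u^{\varepsilon}\|_{L^{\infty}(\Omega_T)}\le A$ by Theorem \ref{regularized.chap3thm1}, the extension $\widetilde{\chi^{\varepsilon}}$ takes values in $\{-1,0,1\}$ and is supported in the bounded cylinder $\Omega\times[-A,A]\times(0,T)$. Hence $(\widetilde{\chi^{\varepsilon}})$ is uniformly bounded in $L^{p}(\mathbb{R}^{d}\times\mathbb{R}\times(0,\infty))$ for every $1\le p\le\infty$, so in particular for the range $1<p\le 2$ of Theorem \ref{VelocityAveragingLemma.12}. Proposition \ref{KineticFormulationProposition12A}(b) provides a sequence $(g^{\varepsilon})$ lying in a compact subset of $L^{p}(\mathbb{R}^{d}\times\mathbb{R}\times(0,\infty))$ together with the identity
\[
\frac{\partial \widetilde{\chi^{\varepsilon}}}{\partial t}+\sum_{j=1}^{d}f_{j}'(c)\,\frac{\partial \widetilde{\chi^{\varepsilon}}}{\partial x_{j}}=(-\Delta_{x,t}+I)^{1/2}(-\Delta_{c}+I)^{r/2}g^{\varepsilon}
\]
in $\mathcal{D}'(\mathbb{R}^{d}\times\mathbb{R}\times(0,\infty))$, which is exactly the equation in Theorem \ref{VelocityAveragingLemma.12}. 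The smoothness assumption on $f'$ needed there is secured by Hypothesis D(a), and the non-degeneracy hypothesis of the lemma coincides verbatim with Hypothesis D(d).

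Next I would pick $\psi\in L^{p'}(\mathbb{R})$ with essential compact support, choosing for concreteness a smooth cutoff that equals $1$ on $I=[-A,A]$ and vanishes outside a slightly larger interval. Hypothesis D(d) then applies to this $\psi$, and Theorem \ref{VelocityAveragingLemma.12} yields that the averaged family
\[
U^{\varepsilon}(x,t):=\int_{\mathbb{R}}\widetilde{\chi^{\varepsilon}}(x,c,t)\,\psi(c)\,dc
\]
lies in a compact subset of $L^{p}_{loc}(\mathbb{R}^{d}\times(0,\infty))$. Now the elementary identity $\int_{\mathbb{R}}\chi_{u}(c)\,dc=-u$ (recorded in the paper just after the definition of $\chi_{u^{\varepsilon}}$), combined with $\psi\equiv 1$ on $I$ and $\chi_{u^{\varepsilon}}$ being supported in $[-A,A]$ for each $(x,t)$, shows that $U^{\varepsilon}=-u^{\varepsilon}$ on $\Omega_{T}$ and $U^{\varepsilon}=0$ on its complement. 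Restricting to $\Omega_{T}$ one concludes that $(u^{\varepsilon})$ is precompact in $L^{p}_{loc}(\Omega_{T})$, which is the assertion of the theorem.

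The main obstacle, as I see it, is not the abstract reduction but the careful matching of function spaces: ensuring that the right-hand side of the kinetic equation has precisely the Bessel-potential form required by the velocity averaging lemma, and that the auxiliary $(g^{\varepsilon})$ is genuinely precompact in $L^{p}$ rather than merely bounded. Those verifications have already been carried out in Proposition \ref{KineticFormulationProposition12A} together with Lemma \ref{Lion.Parthame.Tadmor.p.178} and Proposition \ref{Fractional.Correction.inverseBessel.proposition}, so once these are invoked the conclusion follows essentially by quoting Theorem \ref{VelocityAveragingLemma.12} and translating the averaged quantity back into $u^{\varepsilon}$.
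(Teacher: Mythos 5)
Your overall strategy coincides with the paper's: feed the extended kinetic function $\widetilde{\chi^{\varepsilon}}$ into Theorem \ref{VelocityAveragingLemma.12} and recover $u^{\varepsilon}$ from the velocity average via $\int_{\mathbb{R}}\chi_{u^{\varepsilon}}(c)\,dc=-u^{\varepsilon}$ (the paper takes $\psi=\chi_{[-\|u_{0}\|_{\infty},\|u_{0}\|_{\infty}]}$ where you take a smooth cutoff; both are admissible under Hypothesis D(d)). However, there is a genuine gap at the central step. You assert that Proposition \ref{KineticFormulationProposition12A}(b) already supplies the identity
\[
\frac{\partial \widetilde{\chi^{\varepsilon}}}{\partial t}+\sum_{j=1}^{d}f_{j}'(c)\,\frac{\partial \widetilde{\chi^{\varepsilon}}}{\partial x_{j}}=\left(-\Delta_{x,t}+I\right)^{\frac{1}{2}}\left(-\Delta_{c}+I\right)^{\frac{r}{2}}g^{\varepsilon}
\]
in $\mathcal{D}'\left(\mathbb{R}^{d}\times\mathbb{R}\times(0,\infty)\right)$. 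It does not: equation \eqref{KineticFormulation.Equation612A1B1} is stated only in $\mathcal{D}'\left(\Omega\times\mathbb{R}\times(0,T)\right)$, for $\chi^{\varepsilon}$ rather than for its zero-extension. Extending a distributional solution by zero across $\partial\Omega\times(0,T)$ and across the slices $t=0$, $t=T$ does not automatically preserve the equation; in general one picks up surface distributions supported on the boundary of the cylinder, and the velocity averaging lemma is stated on the whole space, so these must be shown to vanish.

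This verification is precisely what the bulk of the paper's proof of Theorem \ref{Kinetic.Compactness.Result} does: it constructs the measure $\nu^{\varepsilon}$ on all of $\mathbb{R}^{d}\times\mathbb{R}\times(0,\infty)$ from $\widetilde{\widetilde{m^{\varepsilon}}}$, and then checks the equation against three classes of test functions (supported inside the cylinder, supported in its complement, and straddling its boundary). For the last class the integration by parts in \eqref{Measure.intersection.equation.solution2} produces boundary terms on $\partial\Omega\times(0,T)$ and on $\Omega\times\{0\}$, $\Omega\times\{T\}$, and these are killed only because $u^{\varepsilon}=0$ on $\partial\Omega\times(0,T)$ (so that $\mbox{sg}(u^{\varepsilon}-c)+\mbox{sg}(c)$ vanishes there) together with the fact that $\mathbb{R}^{d}\times\mathbb{R}\times\{T\}$ has measure zero. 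Without this argument your application of Theorem \ref{VelocityAveragingLemma.12} is not justified, since its hypothesis \eqref{Velocity.AveragingEquation1.A1} is a global statement on $\mathbb{R}^{d}_{x}\times\mathbb{R}_{c}\times(0,\infty)$. The remainder of your reduction (uniform $L^{p}$ bound on $\widetilde{\chi^{\varepsilon}}$, choice of $\psi$, and passage from the compactness of the averages back to $u^{\varepsilon}$) is correct and matches the paper.
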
 
\begin{proof}
Let $\mathcal{L}_{1}$, $\mathcal{L}_{2}$ denote the sigma algebras of Lebesgue measurable sets of\\ $\Omega\times\left(-\|u_0\|_{L^{\infty}(\Omega)},\|u_0\|_{L^{\infty}(\Omega)}\right)\times (0,T)$	and $\mathbb{R}^{d}\times\mathbb{R}\times\left(0,\infty\right)$ respectively. Let $\mathcal{B}_{1}$, $\mathcal{B}_{2}$ denote the sigma algebras of Borel measurable sets of $\Omega\times\left(-\|u_0\|_{L^{\infty}(\Omega)},\|u_0\|_{L^{\infty}(\Omega)}\right)\times (0,T)$	and $\mathbb{R}^{d}\times\mathbb{R}\times\left(0,\infty\right)$ respectively. 
Since open sets of $\Omega\times\mathbb{R}\times\left(0,T\right)$ are open sets of  $\mathbb{R}^{d}\times\mathbb{R}\times\left(0,\infty\right)$, therefore $\mathcal{B}_{1}\subset \mathcal{B}_{2}.$
We know that measure zero sets of $\Omega\times\left(-\|u_0\|_{L^{\infty}(\Omega)},\|u_0\|_{L^{\infty}(\Omega)}\right)\times (0,T)$ are also measure zero sets of $\mathbb{R}^{d}\times\mathbb{R}\times\left(0,\infty\right)$. Hence, the measure zero sets are Lebesgue mesureable sets of $\mathbb{R}^{d}\times\mathbb{R}\times\left(0,\infty\right)$. Applying Theorem \ref{Gdelta.Fsigma}, we conclude that $\mathcal{L}_{1}\subset\mathcal{L}_{2}$. Let $\widetilde{\widetilde{m^{\varepsilon}}}^{\ast}$ be the Lebesgue measure on $\mathcal{L}_{2}$ introduced by the regular Borel measure $\widetilde{\widetilde{m^{\varepsilon}}}$ (See \cite[p.22-p.23]{Athreya}) on $\mathcal{B}_{2}$ such that $\widetilde{\widetilde{m^{\varepsilon}}}^{\ast}\Big|_{\mathcal{B}_{2}}=\widetilde{\widetilde{m^{\varepsilon}}}$. Define $\nu^{\varepsilon}:\mathcal{L}_{2}\to [0,\infty]$ by 
$$\nu^{\varepsilon}(A):=\widetilde{\widetilde{m^{\varepsilon}}}^{\ast}\left(A\cap(\Omega\times\left(-\|u_0\|_{L^{\infty}(\Omega)},\|u_0\|_{L^{\infty}(\Omega)}\right)\times (0,T))\right).$$
We prove that $\left(\nu^{\varepsilon}\right)$ is a sequence of measures on $\mathcal{L}_{2}$ in two steps. In Step (a), we show that for every $\varepsilon >0$, $\nu^{\varepsilon}\left(\emptyset\right)=0$ and in Step (b), we show that for every $\varepsilon >0$, $\nu^{\varepsilon}$ is countabily additive on $\mathcal{L}_{2}$.\\
\textbf{Step (a):} For $\varepsilon>0$, since $\widetilde{\widetilde{m^{\varepsilon}}}$ are measures, we have $\nu^{\varepsilon}\left(\emptyset\right)=\widetilde{\widetilde{m^{\varepsilon}}}^{\ast}\left(\emptyset\right)=\widetilde{\widetilde{m^{\varepsilon}}}\left(\emptyset\right)=0$.\\
\textbf{Step (b):} For $n\in\mathbb{N}$, let $A_{n}\in\mathcal{L}_{2}$ such that $A_{i}\cap A_{j}=\emptyset$ for $i\neq j$. Observe that 
\begin{equation}\label{Correctionnuepsilonmeasure.eqn1}
\begin{split}
\displaystyle\cup_{j=1}^{\infty}A_{j}=\left(\displaystyle\cup_{j=1}^{\infty}\left(A_{j}\cap\left(\Omega\times\left(-\|u_0\|_{L^{\infty}(\Omega)},\|u_0\|_{L^{\infty}(\Omega)}\right)\times (0,T)\right)\right)\right)\nonumber\\ \cup\left(\displaystyle\cup_{j=1}^{\infty}\left(A_{j}\cap\left(\Omega\times\left(-\|u_0\|_{L^{\infty}(\Omega)},\|u_0\|_{L^{\infty}(\Omega)}\right)\times (0,T)\right)^{c}\right)\right).
\end{split}
\end{equation}
In view of \eqref{Correctionnuepsilonmeasure.eqn1}, we have
\begin{eqnarray}\label{Correctionnuepsilonmeasure.eqn2}
\nu^{\varepsilon}\left(\displaystyle\cup_{j=1}^{\infty}A_{j}\right)&=&\widetilde{\widetilde{m^{\varepsilon}}}^{\ast}\left(\left(\displaystyle\cup_{j=1}^{\infty}A_{j}\right)\cap\left(\Omega\times\left(-\|u_0\|_{L^{\infty}(\Omega)},\|u_0\|_{L^{\infty}(\Omega)}\right)\times (0,T)\right)\right)\nonumber\\ &=&\widetilde{\widetilde{m^{\varepsilon}}}^{\ast}\left(\displaystyle\cup_{j=1}^{\infty}\left(A_{j}\cap\left(\Omega\times\left(-\|u_0\|_{L^{\infty}(\Omega)},\|u_0\|_{L^{\infty}(\Omega)}\right)\times (0,T)\right)\right)\right)\nonumber\\
&=&\displaystyle\sum_{j=1}^{\infty}\widetilde{\widetilde{m^{\varepsilon}}}^{\ast}\left(\left(A_{j}\cap\left(\Omega\times\left(-\|u_0\|_{L^{\infty}(\Omega)},\|u_0\|_{L^{\infty}(\Omega)}\right)\times (0,T)\right)\right)\right)\nonumber\\ 
&=& \displaystyle\sum_{j=1}^{\infty}\nu^{\varepsilon}\left(A_{j}\right).
\end{eqnarray}
Therefore $\left(\nu^{\varepsilon}\right)$ defines a sequence of measures.\\
\vspace{0.2cm}\\
We now verify \eqref{Velocity.AveragingEquation1.A1} in the following three steps with $h=\widetilde{\chi^{\varepsilon}}$.\\
\textbf{Step 1:} Let $\phi\in\mathcal{D}\left(\mathbb{R}^{d}\times\mathbb{R}\times (0,\infty)\right)$ such that $$\mbox{supp}\left(\phi\right)\subset\Omega\times\left(-\|u_0\|_{L^{\infty}(\Omega)},\|u_0\|_{L^{\infty}(\Omega)}\right)\times (0,T).$$
In view of \eqref{KineticFormulation.Equation612A1}, we have
$$\frac{\partial \widetilde{\chi^{\varepsilon}}}{\partial t} +\displaystyle\sum_{j=1}^{d}f_{j}^{\prime}(c)\,\frac{\partial\widetilde{\chi^{\varepsilon}}}{\partial x_{j}} =\frac{\partial \chi^{\varepsilon}}{\partial t} +\displaystyle\sum_{j=1}^{d}f_{j}^{\prime}(c)\,\frac{\partial\chi^{\varepsilon}}{\partial x_{j}} =\frac{\partial m^{\varepsilon}}{\partial c}= \frac{\partial \widetilde{\widetilde{m^{\varepsilon}}}^{\ast}}{\partial c}=\frac{\partial \nu^{\varepsilon}}{\partial c}$$
in $\mathcal{D}^{\prime}\left(\Omega\times\left(-\|u_0\|_{L^{\infty}(\Omega)},\|u_0\|_{L^{\infty}(\Omega)}\right)\times (0,T)\right)$. Applying Proposition \ref{KineticFormulationProposition12A}, we have \eqref{Velocity.AveragingEquation1.A1}.\\
\vspace{0.2cm}\\
\textbf{Step 2:} Let $\phi\in\mathcal{D}\left(\mathbb{R}^{d}\times\mathbb{R}\times (0,\infty)\right)$ such that $$\mbox{supp}\left(\phi\right)\subset\mathbb{R}^{d}\times\mathbb{R}\times (0,\infty)\setminus\Omega\times\left(-\|u_0\|_{L^{\infty}(\Omega)},\|u_0\|_{L^{\infty}(\Omega)}\right)\times (0,T),$$ then from the equation, we get
\begin{eqnarray}\label{Measure.Averaging.LHS.RHS}
\int_{\mathbb{R}^{d}}\int_{-\|u_0\|_{L^{\infty}(\Omega)}}^{\|u_0\|_{L^{\infty}(\Omega)}}\int_{0}^{\infty}\left\{\widetilde{\chi^{\varepsilon}}(c)\frac{\partial\phi}{\partial t}\,+\,\displaystyle\sum_{j=0}^{d}\,f_{j}^{\prime}(c)\widetilde{\chi^{\varepsilon}}\,\frac{\partial\phi}{\partial x_{j}}\right\}\,dx\,dc\,dt &=&-\left<\frac{\partial\nu^{\varepsilon}}{\partial c},\,\phi\right>\nonumber\\
&=& \left<\nu^{\varepsilon},\,\frac{\partial\phi}{\partial c}\right>\nonumber\\
&=&\int_{\mathbb{R}^{d}\times\mathbb{R}\times(0,\infty)}\,\frac{\partial\phi}{\partial c}\,d\nu^{\varepsilon}\nonumber\\
{}
\end{eqnarray}
Since $\mbox{Supp}\left(\nu^{\varepsilon}\right)\subset\overline{\Omega\times\left(-\|u_0\|_{L^{\infty}(\Omega)},\|u_0\|_{L^{\infty}(\Omega)}\right)\times (0,T)}$, hence the LHS and RHS both are zero. \\
\vspace{0.1cm}\\
\textbf{Step 3:} Let $\phi\in\mathcal{D}\left(\mathbb{R}^{d}\times\mathbb{R}\times(0,\infty)\right)$ such that $$\mbox{measure}\left(\mbox{Supp}\left(\phi\right)\cap\left(\Omega\times\left(-\|u_0\|_{L^{\infty}(\Omega)},\|u_0\|_{L^{\infty}(\Omega)}\right)\times (0,T)\right)\right)>0.$$
 For $j\in\left\{1,2,\cdots,d\right\}$, we know that $\chi^{\varepsilon}=\mbox{sg}\left(u^{\varepsilon}-c\right)-\mbox{sg}\left(c\right)$ satisfies \eqref{KineticFormulation.Equation612A1} in $\mathcal{D}^{\prime}\left(\Omega\times\mathbb{R}\times(0,T)\right)$. Observe that
\begin{eqnarray}\label{Measure.newline.intersection}
\frac{\partial}{\partial t}\left(\mbox{sg}\left(u^{\varepsilon}-c\right)-\mbox{sg}\left(c\right)\right)&=&\frac{\partial}{\partial t}\left(\mbox{sg}\left(u^{\varepsilon}+c\right)+\mbox{sg}\left(c\right)\right)\nonumber\\ \frac{\partial}{\partial x_{j}}\left(\mbox{sg}\left(u^{\varepsilon}-c\right)-\mbox{sg}\left(u^{\varepsilon}-c\right)\right)&=&\frac{\partial}{\partial x_{j}}\left(\mbox{sg}\left(u^{\varepsilon}+c\right)+\mbox{sg}\left(c\right)\right)\nonumber
\end{eqnarray}
Since $\widetilde{u^{\varepsilon}}\equiv 0$ on $\partial\Omega\times (0,T)$, therefore $\widetilde{u^{\varepsilon}}$  lies in $H^{1}\left(\mathbb{R}^{d}\times (0,T)\right)$. Observe that the set $\mathbb{R}^{d}\times\mathbb{R}\times\left\{T\right\}$ is a measure zero set in $\mathbb{R}^{d}\times\mathbb{R}\times (0,\infty)$.  Let $\phi\in\mathcal{D}\left(\mathbb{R}^{d}\times\mathbb{R}\times(0,\infty)\right)$ such that $\mbox{supp}\left(\phi\right)\subset\mathcal{D}\left(\mathbb{R}^{d}\times\mathbb{R}\times (0,T)\right)$ or $\mbox{supp}\left(\phi\right)\subset\mathcal{D}\left(\mathbb{R}^{d}\times\mathbb{R}\times(T,\infty)\right)$, then the derivation of Kinteic equations in Subsection \ref{Kineticformulationviscosityproblem} is valid in the domains $\mathbb{R}^{d}\times\mathbb{R}\times(0,T)$ and $\mathbb{R}^{d}\times\mathbb{R}\times(T,\infty)$. Therefore, we obtain
\begin{equation}\label{KineticFormulation.Equation3ABC123}
\begin{split}
\left(\frac{\partial}{\partial t}\left(\eta(\widetilde{u^{\varepsilon}};c)-\eta(0;c)\right)\right) &+ \displaystyle\sum_{j=1}^{d}\,\left(\frac{\partial}{\partial x_{j}}\left(q_{j}(\widetilde{u^{\varepsilon}};c)-q_{j}(0;c)\right)\right)\\ 
&=\left(\varepsilon\,\eta^{\prime}\left(\widetilde{u^{\varepsilon}};c\right)\displaystyle\sum_{j=1}^{d}\frac{\partial}{\partial x_{j}}\left(B(\widetilde{u^{\varepsilon}})\,\frac{\partial \widetilde{u^{\varepsilon}}}{\partial x_{j}}\right)\right)\,\,\mbox{on}\,\,\mathbb{R}^{d}\times\mathbb{R}\times\left((0,\infty)\setminus\left\{T\right\}\right). 
\end{split}
\end{equation}
Since $\left(\eta(\widetilde{u^{\varepsilon}};c)-\eta(0;c)\right),\,\left(q_{j}(\widetilde{u^{\varepsilon}};c)-q_{j}(0;c)\right)\in L^{1}_{loc}\left(\mathbb{R}^{d}\times\mathbb{R}\times (0,\infty)\right)$ and  $\mathbb{R}^{d}\times\mathbb{R}\times\left\{T\right\}$ is a set of measure zero, we have the following in distributions.
\begin{equation}\label{KineticFormulation.Equation3ABC12}
\begin{split}
\frac{\partial}{\partial c}\left(\frac{\partial}{\partial t}\left(\eta(\widetilde{u^{\varepsilon}};c)-\eta(0;c)\right)\right) &+ \displaystyle\sum_{j=1}^{d}\,\frac{\partial}{\partial c}\left(\frac{\partial}{\partial x_{j}}\left(q_{j}(\widetilde{u^{\varepsilon}};c)-q_{j}(0;c)\right)\right)\\ 
&=\frac{\partial}{\partial c}\left(\varepsilon\,\eta^{\prime}\left(\widetilde{u^{\varepsilon}};c\right)\displaystyle\sum_{j=1}^{d}\frac{\partial}{\partial x_{j}}\left(B(\widetilde{u^{\varepsilon}})\,\frac{\partial \widetilde{u^{\varepsilon}}}{\partial x_{j}}\right)\right)\,\,\mbox{in}\,\,\mathcal{D}^{\prime}\left(\mathbb{R}^{d}\times\mathbb{R}\times(0,\infty)\right). 
\end{split}
\end{equation}
\begin{equation}\label{KineticFormulation.Equation3ABC125}
\begin{split}
\frac{\partial}{\partial t}\left(\frac{\partial}{\partial c}\left(\eta(\widetilde{u^{\varepsilon}};c)-\eta(0;c)\right)\right) &+ \displaystyle\sum_{j=1}^{d}\,\frac{\partial}{\partial x_{j}}\left(\frac{\partial}{\partial c}\left(q_{j}(\widetilde{u^{\varepsilon}};c)-q_{j}(0;c)\right)\right)\\ 
&=\frac{\partial}{\partial c}\left(\varepsilon\,\eta^{\prime}\left(\widetilde{u^{\varepsilon}};c\right)\displaystyle\sum_{j=1}^{d}\frac{\partial}{\partial x_{j}}\left(B(\widetilde{u^{\varepsilon}})\,\frac{\partial \widetilde{u^{\varepsilon}}}{\partial x_{j}}\right)\right)\,\,\mbox{in}\,\,\mathcal{D}^{\prime}\left(\mathbb{R}^{d}\times\mathbb{R}\times(0,\infty)\right). 
\end{split}
\end{equation}
From \eqref{KineticFormulation.Equation3ABC125}, we arrive at
\begin{equation}\label{KineticFormulation.Equation3ABC1256}
\begin{split}
\frac{\partial}{\partial t}\left(\mbox{sg}\left(\widetilde{u^{\varepsilon}}-c\right)-\mbox{sg}(c)\right) &+ \displaystyle\sum_{j=1}^{d}\,f^{\prime}_{j}(c)\frac{\partial}{\partial x_{j}}\left(\mbox{sg}\left(\widetilde{u^{\varepsilon}}-c\right)-\mbox{sg}(c)\right)\\ 
&=\frac{\partial}{\partial c}\left(\varepsilon\,\eta^{\prime}\left(\widetilde{u^{\varepsilon}};c\right)\displaystyle\sum_{j=1}^{d}\frac{\partial}{\partial x_{j}}\left(B(\widetilde{u^{\varepsilon}})\,\frac{\partial \widetilde{u^{\varepsilon}}}{\partial x_{j}}\right)\right)\,\,\mbox{in}\,\,\mathcal{D}^{\prime}\left(\mathbb{R}^{d}\times\mathbb{R}\times(0,\infty)\right). 
\end{split}
\end{equation}
Observe that
\begin{equation}\label{KineticFormulation.Equation3ABC12567}
\begin{split}
\frac{\partial}{\partial t}\left(\mbox{sg}\left(\widetilde{u^{\varepsilon}}-c\right)+\mbox{sg}(c)\right) &+ \displaystyle\sum_{j=1}^{d}\,f^{\prime}_{j}(c)\frac{\partial}{\partial x_{j}}\left(\mbox{sg}\left(\widetilde{u^{\varepsilon}}-c\right)+\mbox{sg}(c)\right)\\ 
&=\frac{\partial}{\partial c}\left(\varepsilon\,\eta^{\prime}\left(\widetilde{u^{\varepsilon}};c\right)\displaystyle\sum_{j=1}^{d}\frac{\partial}{\partial x_{j}}\left(B(\widetilde{u^{\varepsilon}})\,\frac{\partial \widetilde{u^{\varepsilon}}}{\partial x_{j}}\right)\right)\,\,\mbox{in}\,\,\mathcal{D}^{\prime}\left(\mathbb{R}^{d}\times\mathbb{R}\times(0,\infty)\right). 
\end{split}
\end{equation}
Let $\phi\in\mathcal{D}\left(\mathbb{R}^{d}\times\mathbb{R}\times(0,\infty)\right)$ such that $\mbox{supp}\left(\phi\right)\cap\mathbb{R}^{d}\times\mathbb{R}\times \left\{T\right\}\neq\O $. Then, we consider the following 
\begin{equation}\label{Measure.intersection.equation.solution2}
\begin{split}
\int_{\mathbb{R}^{d}\times\mathbb{R}\times(0,\infty)}\,\left\{\frac{\partial\widetilde{\chi^{\varepsilon}}(c)}{\partial t} +\displaystyle\sum_{j=1}^{d}f_{j}^{\prime}(c)\frac{\partial\widetilde{\chi^{\varepsilon}}}{\partial x_{j}}\right\}\,\phi\,dx\,dc\,dt\hspace{9cm}\\ = \int_{\mathbb{R}^{d}\times\mathbb{R}\times(0,\infty)}\,\Big\{\frac{\partial}{\partial t}\left(\mbox{sg}\left(\widetilde{u^{\varepsilon}}-c\right)+\mbox{sg}\left(c\right)\right)+\displaystyle\sum_{j=1}^{d}f_{j}^{\prime}(c)\frac{\partial}{\partial x_{j}}\left(\mbox{sg}\left(\widetilde{u^{\varepsilon}}-c\right)+\mbox{sg}\left(c\right)\right)\Big\}\phi\,dx\,dc\,dt\hspace{2cm}\\
=-\displaystyle\lim_{N\to\infty}\int_{-N}^{N}\int_{\Omega\times(0,T)}\,\Big\{\left(\mbox{sg}\left(u^{\varepsilon}-c\right)+\mbox{sg}\left(c\right)\right)\frac{\partial\phi}{\partial t} +\displaystyle\sum_{j=1}^{d}f_{j}^{\prime}(c)\left(\mbox{sg}\left(u^{\varepsilon}-c\right)+\mbox{sg}\left(c\right)\right)\frac{\partial\phi}{\partial x_{j}}\Big\}\,dx\,dt\,dc\hspace{1cm}\\ +\displaystyle\lim_{N\to\infty}\int_{-N}^{N}\Big(\int_{\Omega}\left(\mbox{sg}\left(\widetilde{u^{\varepsilon}}(x,T)-c\right)+\mbox{sg}\left(c\right)\right)\phi(x,c,T)-\left(\mbox{sg}\left(u^{\varepsilon}(x,0)-c\right)+\mbox{sg}\left(c\right)\right)\phi(x,c,0)\Big)dt\,dx\,dc\\
+\displaystyle\sum_{j=1}^{d}\displaystyle\lim_{N\to\infty}\int_{-N}^{N}\int_{0}^{T}\int_{\partial\Omega}\left(\mbox{sg}\left(u^{\varepsilon}(x,t)-c\right)+\mbox{sg}\left(c\right)\right)\phi(x,c,t)\sigma_{j}\,d\sigma\,dt\,dc.\hspace{4cm}
\end{split}
\end{equation}
Since $u^{\varepsilon}=0\,\,\mbox{on}\,\partial\Omega\times (0,T)$, therefore $\left(\mbox{sg}\left(u^{\varepsilon}-c\right)+\mbox{sg}\left(c\right)\right)\,\phi(x,c,t)=0$ on the parabolic boundary of $\Omega_{T}$. Observe that the LHS of \eqref{Measure.intersection.equation.solution2} is zero in\\ $\mathcal{D}^{\prime}\left(\mathbb{R}^{d}\times\mathbb{R}\times (0,\infty)\setminus \left(\Omega\times\left(-\|u_0\|_{L^{\infty}(\Omega)},\|u_0\|_{L^{\infty}(\Omega)}\right)\times (0,T)\right)\right)$, therefore, we consider the RHS of \eqref{KineticFormulation.Equation3ABC1256} only on $\Omega\times\left(-\|u_0\|_{L^{\infty}(\Omega)},\|u_0\|_{L^{\infty}(\Omega)}\right)\times (0,T)$. Hence, we obtain the following
$$\varepsilon\,\eta^{\prime}\left(\widetilde{u^{\varepsilon}};c\right)\displaystyle\sum_{j=1}^{d}\frac{\partial}{\partial x_{j}}\left(B(\widetilde{u^{\varepsilon}})\,\frac{\partial \widetilde{u^{\varepsilon}}}{\partial x_{j}}\right)=\varepsilon\,\eta^{\prime}\left(u^{\varepsilon};c\right)\displaystyle\sum_{j=1}^{d}\frac{\partial}{\partial x_{j}}\left(B(u^{\varepsilon})\,\frac{\partial u^{\varepsilon}}{\partial x_{j}}\right)=m^{\varepsilon}$$
in $\mathcal{D}^{\prime}\left(\Omega\times\left(-\|u_0\|_{L^{\infty}(\Omega)},\|u_0\|_{L^{\infty}(\Omega)}\right)\times (0,T)\right)$.

Using \eqref{Measure.intersection.equation.solution2} in  \eqref{KineticFormulation.Equation3ABC1256}, we arrive at
\begin{eqnarray}\label{Measure.calculation.equation.51a}
\int_{\mathbb{R}^{d}\times\mathbb{R}\times(0,\infty)}\,\left\{\frac{\partial\widetilde{\chi^{\varepsilon}}(c)}{\partial t} +\displaystyle\sum_{j=1}^{d}f_{j}^{\prime}(c)\frac{\partial\widetilde{\chi^{\varepsilon}}}{\partial x_{j}}\right\}\,\phi\,dx\,dc\,dt=\int_{\mathbb{R}^{d}\times\mathbb{R}\times(0,\infty)}\,\frac{\partial\phi}{\partial c}\,d\nu^{\varepsilon}.
\end{eqnarray}
Applying Proposition \ref{KineticFormulationProposition12A} in \eqref{Measure.calculation.equation.51a}, we conclude \eqref{Velocity.AveragingEquation1.A1}. An application of Theorem \ref{VelocityAveragingLemma.12} with $h=\chi_{\widetilde{u^{\varepsilon}}}(c)$, $g=\widetilde{g^{\varepsilon}}$ and $\psi(c)=\chi_{[-\|u_{0}\|_{\infty},\|u_{0}\|_{\infty}]}(c)$, we conclude the proof of Theorem \ref{Kinetic.Compactness.Result} $\blacksquare$  	
\end{proof}	
\section{Unique entorpy solution to scalar conservation law}\label{Section.F.Otto.entropysolution}
In this section, we want to prove that the {\it a.e.} limit of quasilinear viscous approximations is the unique entropy solution
for scalar conservation laws in the sense of Otto \cite{MR1387428}. We now give some basic definitions before introducing the concepts of entropy solution. 
\begin{definition}[\textbf{Entropy}]\label{Otto.entropy}
 {\rm Let $\eta$ be a convex function. Then $\eta$ is said to be an entropy if for $j\in\left\{1,2,\cdots,d\right\}$, there exist function 
 $q_{j}$ such that for all $z\in\mathbb{R}$, the following equality 
 $$\eta^{\prime}(z)\,f_{j}^{\prime}(z)= q_{j}^{\prime}(z)$$
 hold.}
\end{definition}
Denote $Q:=\left(Q_{1},Q_{2},\cdots,Q_{d}\right).$
\begin{definition}[\textbf{Boundary-Entropy}]\cite[p.103]{Necas}\label{Otto.Bentropy}
{\rm Let $H, Q\in C^{2}(\mathbb{R}^{2})$. A pair $\left(H, Q\right)$ is called a boundary entropy-entropy flux pair if for 
$w\in\mathbb{R}$, $\left(H(.,w), Q(.,w)\right)$ is an entropy-entropy flux pair in the sense of Definition \ref{Otto.entropy}
and $H, Q$ satisfy
$$ H(w,w)=0,\,\, Q(w,w)=0,\,\,\partial_{1}\,H(w,w)=0,$$
where $\partial_{1}H$ denotes the partial derivative with respect to the first variable.}
\end{definition}
\begin{remark}\cite[p.104]{Necas}\label{Boundary.Entropy.F.Otto.rmk1}
Let $k\in\mathbb{R}$ be arbitary but fixed and let $l\in\mathbb{N}$. Define the entropy-entropy flux pair $\left(\eta_{l},q_{l}\right)$ by
\begin{eqnarray}\nonumber\\
\eta_{l}(z)&:=&\left(\left(z-k\right)^{2}+\left(\frac{1}{l}\right)^2\right)^{\frac{1}{2}}-\frac{1}{l},\nonumber\\
q_{l}(z)&:=& \int_{k}^{z}\,\eta_{l}^{\prime}(r)\,f^{\prime}(r)\,dr.\nonumber
\end{eqnarray}
Then $\left(\eta_{l},q_{l}\right)$ converges uniformly to a non-smooth entropy-entropy flux pair $\left(|z-k|,\,F(z,k)\right)$ as $l\to\infty$, where
$$F(z,k)=\mbox{sg}\left(z-k\right)\,\,\left(f(z)-f(k)\right).$$
Denote the closed interval between real numbers $a,b$ by 
$$\mathcal{I}[a,b]:=\left[\mbox{min}(a,b),\mbox{max}(a,b)\right].$$
Define a boundary entropy-entropy flux pair $\left(H_{l},Q_{l}\right)$ by
\begin{eqnarray}\nonumber
H_{l}(z,w)&:=&\left(\left(\mbox{dist}\left(z,\mathcal{I}(w,k)\right)\right)^{2}+\left(\frac{1}{l}\right)^2\right)^{\frac{1}{2}}-\frac{1}{l},\nonumber\\
Q_{l}(z,w)&:=& \int_{w}^{z}\,\partial_{1}H_{l}(r,w)\,f^{\prime}(r)\,dr.\nonumber
\end{eqnarray}
The sequence $\left(H_{l},Q_{l}\right)$ converges uniformly to $\left(\mbox{dist}(z,\mathcal{I}[w,k]),\mathcal{F}(z,w,k)\right)$ as $l\to\infty$,
where $\mathcal{F}\in \left(C(\mathbb{R}^{3})\right)^{d}$ is given by 
\[
\mathcal{F}(z,w,k):=
\begin{cases}
f(w)-f(k) & \text{if $z\leq w\leq k$}\\
0         & \text{if $w\leq z\leq k$}\\
f(z)-f(k) & \text{if $w\leq k\leq z$}\\
\vspace{0.1cm}\\
f(k)-f(z) & \text{if $z\leq k\leq w$}\\
0         & \text{if $k\leq z\leq w$}\\
f(z)-f(w) & \text{if $k\leq w\leq z$}.\\
\end{cases}
\]
\end{remark}
\begin{definition}[\textbf{F. Otto}]\cite[p.103]{Necas}\label{F.Otto}
 Let $u_{0}\in L^{\infty}(\Omega)$ and $f\in\left(C^{1}(\mathbb{R})\right)^{d}$. We say that $u$ is an entropy solution for IBVP
 \eqref{ivp.cl} if and only if $u\in L^{\infty}(\Omega_{T})$ and satisfies:
 \begin{enumerate}
  \item  the conservation laws and the entropy condition in the sense
  \begin{eqnarray}\label{definition.otto.eqn1}
   \int_{\Omega_{T}}\left(\eta(u)\,\frac{\partial\phi}{\partial t}\,+\,\displaystyle\sum_{j=1}^{d}q_{i}(u)\,\frac{\partial\phi}
  {\partial x_{j}}\right)\,dx\,dt\geq 0
  \end{eqnarray}
for all $\phi\in\mathcal{D}(\Omega_{T}),\,\,\,\phi\geq 0$ and all entropy-entropy flux pairs $\left(\eta, q\right)$;
  \item the boundary condition in the sense 
  \begin{eqnarray}\label{definition.otto.eqn2}
   \mbox{}ess\displaystyle\lim_{h\to 0}\int_{0}^{T}\int_{\partial\Omega}\,Q\left(u(r+h\nu(r)),0\right)\cdot\nu(r)\,\beta(r)
  \,\,\,dr\geq 0,
  \end{eqnarray}
  for all $\beta\in L^{1}(\partial\Omega\times(0,T)),\,\,\,\beta\geq 0$ {\it a.e.} and all boundary entropy fluxes $Q$;
  \item the initial condition $u_{0}\in L^{\infty}(\Omega)$ in the sense
  \begin{eqnarray}\label{definition.otto.eqn3}
   \mbox{ess}\displaystyle\lim_{t\to 0}\int_{\Omega}\left|u(x,t)-u_{0}(x)\right|\,\,dx=0.
  \end{eqnarray}
 \end{enumerate}
\end{definition}
\vspace{0.2cm}
\begin{proof}[\textbf{Proof of Theorem\ref{paper2.compensatedcompactness.theorem1}}] We prove Theorem\ref{paper2.compensatedcompactness.theorem1} in six steps and for any dimension $d\in\mathbb{N}$. In Step 1, we show that the {\it a.e.} limit of a  subsequence of solutions to generalized viscosity problem \eqref{regularized.IBVP} is a weak solution of scalar conservation law \eqref{ivp.cl}. In Step 2, a weak solution satifies the inequality \eqref{definition.otto.eqn1}. In Step 3, we develope theory similar to \cite{Necas} to prove \eqref{definition.otto.eqn2}. In Step 4, we show that $u$ satisfies \eqref{definition.otto.eqn2}. In Step 5, we show that $u$ satisfies initial condition \eqref{definition.otto.eqn3}. In Step 6, we conclude uniqueness of weak solutions of IBVP for conservation laws \eqref{ivp.cl}.\\
\textbf{Step 1:} Let $\phi \in \mathcal{D}(\Omega\times [0, T))$. Multiplying the equation \eqref{regularized.IBVP.a} by $\phi$, integrating over $\Omega_{T}$ and using integrating by parts formula, we get
	\begin{equation}\label{eqnchap502}
	\begin{split}
	\int_{0}^{T}\,\int_{\Omega}u^{\varepsilon}\,\frac{\partial \phi}{\partial t}\,dx\,dt - \varepsilon\,\displaystyle\sum_{j=1}^{d}\int_{0}^{T}\,\int_{\Omega}\,B(u^{\varepsilon})\,\frac{\partial u^{\varepsilon}}{\partial x_{j}}\,\frac{\partial \phi}{\partial x_{j}}\,dx\,dt +\displaystyle\sum_{j=1}^{d}\int_{0}^{T}\,\int_{\Omega}\,f_{j}(u^{\varepsilon})\,\frac{\partial \phi}{\partial x_{j}}\,dx\,dt\\ =\int_{\Omega}u^{\varepsilon}(x,0)\,\phi(x,0)\,dx\,dt.
	\end{split}
	\end{equation}
	We pass to the limit in \eqref{eqnchap502} in three steps. Firstly, we show that 
	\begin{equation}\label{weaksolution.eqn1}
	\displaystyle\lim_{\varepsilon\to 0}\int_{0}^{T}\,\int_{\Omega}u^{\varepsilon}\,\frac{\partial \phi}{\partial t}\,dx\,dt =\int_{0}^{T}\,\int_{\Omega}u\,\frac{\partial \phi}{\partial t}\,dx\,dt.
	\end{equation}
	In view of maximum principle \eqref{regularized.eqnchap303}, the integrand on LHS in \eqref{weaksolution.eqn1} is {\it a.e.} pointwise bounded  by $\|u_{0}\|_{L^{\infty}(\Omega)}\Big|\frac{\partial \phi}{\partial t}\Big|$. Since $u^{\varepsilon}\to u$ as $\varepsilon\to 0$,  we get \eqref{weaksolution.eqn1} by applying bounded convergence theorem.\\
	Next, for $j=1,2,\cdots,d$,  proofs of 
	\begin{equation}\label{weaksolution.eqn2}
	\displaystyle\lim_{\varepsilon\to 0}\int_{0}^{T}\,\int_{\Omega}\,f_{j}(u^{\varepsilon})\,\frac{\partial \phi}{\partial x_{j}}\,dx\,dt = \int_{0}^{T}\,\int_{\Omega}\, f_{j}(u)\,\frac{\partial\phi}{\partial x_{j}}\,dx\,dt
	\end{equation}
	and 
	\begin{equation}\label{weaksolution.eqn4}
	\int_{\Omega}u^{\varepsilon}(x,0)\,\phi(x,0)\,dx=\int_{\Omega}u_{0\varepsilon}(x)\,\phi(x,0)\,dx\to \int_{\Omega} u(x,0)\,\phi(x,0)\,dx
	\end{equation}
	follow similarly.
	Finally, for $j=1,2,\cdots,d$, we show that
	\begin{equation}\label{eqnchap504}
	\displaystyle\lim_{\varepsilon\to 0}\varepsilon\,\int_{0}^{T}\,\int_{\Omega}\,B(u^{\varepsilon})\,\frac{\partial u^{\varepsilon}}{\partial x_{j}}\,\frac{\partial \phi}{\partial x_{j}}\,dx\,dt = 0.
	\end{equation}
	Note that
	\begin{eqnarray}\label{chap5eqn13}
	\Big|\varepsilon\,\int_{0}^{T}\,\int_{\Omega}\,B(u^{\varepsilon})\,\frac{\partial u^{\varepsilon}}{\partial x_{j}}\,\frac{\partial \phi}{\partial x_{j}}\,dx\,dt\Big|&\leq& \|B\|_{L^{\infty}(I)}\varepsilon\,\int_{0}^{T}\int_{\Omega}\Big|\frac{\partial u^{\varepsilon}}{\partial x_{j}}\Big|\,\Big|\frac{\partial\phi}{\partial x_{j}}\Big|\,dx\,dt,\nonumber\\
	&\leq& \|B\|_{L^{\infty}(I)}\sqrt{\varepsilon}\left(\sqrt{\varepsilon}\Big\|\frac{\partial u^{\varepsilon}}{\partial x_{j}}\Big\|_{L^{2}(\Omega_{T})}\right)\Big\|\frac{\partial\phi}{\partial x_{j}}\Big\|_{L^{2}(\Omega_{T})}.
	\end{eqnarray}
	Using boundedness of $\left\{\sqrt{\varepsilon}\Big\|\frac{\partial u^{\varepsilon}}{\partial x_{j}}\Big\|_{L^{2}(\Omega_{T})}\right\}_{\varepsilon\geq 0}$ in \eqref{chap5eqn13}, we get \eqref{eqnchap504}.\\
	Equations \eqref{weaksolution.eqn1},\eqref{eqnchap504},\eqref{weaksolution.eqn2},\eqref{weaksolution.eqn4} together give 
	\begin{equation}\label{eqnchap512}
	\int_{0}^{T}\,\int_{\Omega}u\,\frac{\partial \phi}{\partial t}\,dx\,dt +\displaystyle\sum_{j=1}^{d}\int_{0}^{T}\,\int_{\Omega} f_{j}(u)\,\frac{\partial \phi}{\partial x_{j}}\,dx \,dt =\int_{\Omega}u_{0}(x)\,\phi(x,0)\,dx\,dt.
	\end{equation}
	Therefore $u$ is a weak solution of initial value problem for conservation law \eqref{ivp.cl}.\\
	\vspace{0.1cm}\\
	\textbf{Step 2:} We work with Kruzhkov's entropy pairs as any convex function belongs to the convex hull of the set of all affine functions of the form 
	$$u\mapsto |u-k|.$$
	For $k\in\mathbb{R}$, Kruzhkov's entropy pairs are given by
	\begin{eqnarray}\label{chap70eqn2}
	\eta(u) &=& |u-k|,\nonumber\\
	q(u) &=& \mbox{sg}(u-k)\,\,\left(f(u)-f(k)\right),
	\end{eqnarray}
	where for $j=1,2,\cdots,d$, $\eta$ and $q =\left(q_{1},q_{2},\cdots,q_{d}\right)$ satisfy the following relation
	$$\eta^{'}(u)f_{j}^{'}(u)= q_{j}^{'}(u).$$
	Multiplying \eqref{regularized.IBVP.a} by $\eta^{'}(u^{\varepsilon})$ and applying chain rule, we get
	\begin{equation}\label{chap70eqn4}
	\frac{\partial \eta(u^{\varepsilon})}{\partial t} + \displaystyle\sum_{j=1}^{d}\frac{\partial q_{j}(u^{\varepsilon})}{\partial x_{j}}  = \varepsilon\, \eta^{'}(u^{\varepsilon}) \frac{\partial }{\partial x_{j}}\left(B(u^{\varepsilon})\frac{\partial u^{\varepsilon}}{\partial x_{j}}\right). 
	\end{equation}
	Let $\phi\in \mathcal{D}(\Omega_{T})$ and $\phi\geq 0$. Multiplying \eqref{chap70eqn4} by $\phi$, integrating over $\Omega_{T}$ and using integration by parts formula, we obtain
	\begin{equation}\label{chap70eqn5}
	-\int_{\Omega_{T}}\eta(u^{\varepsilon})\frac{\partial\phi}{\partial t}\,dx dt -\displaystyle\sum_{j=1}^{d}\int_{\Omega_{T}} q_{j}(u^{\varepsilon})\frac{\partial\phi}{\partial x_{j}}\,dx\,dt = \varepsilon\,\displaystyle\sum_{j=1}^{d}\int_{\Omega_{T}}\eta^{'}(u^{\varepsilon})\phi \frac{\partial }{\partial x_{j}}\left(B(u^{\varepsilon})\frac{\partial u^{\varepsilon}}{\partial x_{j}}\right)\,dx dt.
	\end{equation}
	Taking $\liminf$ as $\varepsilon\to 0$ on both sides of \eqref{chap70eqn5}, we have
	\begin{equation}\label{chap70eqn6}
	-\int_{\Omega_{T}}\eta(u)\frac{\partial\phi}{\partial t}\,dx dt -\displaystyle\sum_{j=1}^{d}\int_{\Omega_{T}} q_{j}(u)\frac{\partial\phi}{\partial x_{j}}\,dx dt = \displaystyle\liminf_{\varepsilon\to 0}\displaystyle\sum_{j=1}^{d}\left(\varepsilon\int_{\Omega_{T}} \eta^{'}(u^{\varepsilon})\phi \frac{\partial }{\partial x_{j}}\left(B(u^{\varepsilon})\frac{\partial u^{\varepsilon}}{\partial x_{j}}\right)\,dx dt\right).
	\end{equation}
	Let $G:\mathbb{R}^{d}\to\mathbb{R}$ be a $C^{\infty}$ function such that 
	$$G(x) = |x|\,\,\mbox{for} |x|\geq 1\,\,\mbox{and}\,\,G^{''}\geq 0.$$
	For $\delta > 0$, define $G_{\delta}:\mathbb{R}\to\mathbb{R}$ by 
	$$G_{\delta}(y) = \delta G\left(\frac{(y-k)}{\delta}\right)$$
	so that $G_{\delta}(y)\to |y-k|$ as $\delta\to 0$, see\cite[p.72]{MR1304494}. The function $G_{\delta}$ is a convex function and $G^{'}_{\delta}(v)\to \mbox{sg}(v-k)$ as $\delta\to 0$. \\
	For $j=1,2,\cdots,d$, note that
	\begin{eqnarray}\label{chap70eqn7}
	\varepsilon \int_{\Omega_{T}} \eta^{\prime}(u^{\varepsilon})\phi \frac{\partial }{\partial x_{j}}\left(B(u^{\varepsilon})\frac{\partial u^{\varepsilon}}{\partial x_{j}}\right)\,dx dt &=&\varepsilon\displaystyle\lim_{\delta\to 0}\int_{\Omega_{T}} G_{\delta}^{'}(u^{\varepsilon})\phi \frac{\partial }{\partial x_{j}}\left(B(u^{\varepsilon})\frac{\partial u^{\varepsilon}}{\partial x_{j}}\right)\,dx dt.\nonumber\\
	&=& -\varepsilon \displaystyle\displaystyle\lim_{\delta\to 0}\int_{\Omega_{T}} G_{\delta}^{''}(u^{\varepsilon})\phi B(u^{\epsilon})\left( \frac{\partial u^{\varepsilon}}{\partial x_{j}}\right)^{2}\,dx dt\nonumber\\
	&-&\varepsilon \displaystyle\lim_{\delta\to 0}\int_{\Omega_{T}} G_{\delta}^{'}(u^{\varepsilon})\frac{\partial\phi}{\partial x_{j}} B(u^{\varepsilon})\frac{\partial u^{\varepsilon}}{\partial x_{j}}\, dx dt.
	\end{eqnarray}
	Since $\left\{\sqrt{\varepsilon}\|\frac{\partial u^{\varepsilon}}{\partial x_{j}}\|_{L^{2}(\Omega_{T})}\right\}_{\varepsilon\geq 0}$ is bounded, for $j=1,2,\cdots,d$, we can easily deduce that
	\begin{equation*}
	\Big|-\varepsilon \displaystyle\lim_{\delta\to 0}\int_{\Omega_{T}} G_{\delta}^{'}(u^{\varepsilon})\frac{\partial\phi}{\partial x_{j}} B(u^{\varepsilon})\frac{\partial u^{\varepsilon}}{\partial x_{j}}\, dx dt.\Big|\to 0\,\,\mbox{as}\,\,\varepsilon\to 0.
	\end{equation*}
	Therefore, we have 
	\begin{equation}\label{entropy.solution.eqn1}
	\Big|-\varepsilon\displaystyle\sum_{j=1}^{d}\displaystyle\lim_{\delta\to 0}\int_{\Omega_{T}} G_{\delta}^{'}(u^{\varepsilon})\frac{\partial\phi}{\partial x_{j}} B(u^{\varepsilon})\frac{\partial u^{\varepsilon}}{\partial x_{j}}\, dx dt.\Big|\to 0\,\,\mbox{as}\,\,\varepsilon\to 0.
	\end{equation}
	Observe that
	\begin{equation}\label{entropy.solution.eqn2}
	-\displaystyle\limsup_{\varepsilon\to 0}\left(\varepsilon\displaystyle\sum_{j=1}^{d} \displaystyle\lim_{\delta\to 0}\int_{\Omega_{T}} G_{\delta}^{''}(u^{\varepsilon})\phi B(u^{\epsilon})\left( \frac{\partial u^{\varepsilon}}{\partial x_{j}}\right)^{2}\,dx dt\right)\leq 0.
	\end{equation}
	Using \eqref{entropy.solution.eqn1}, \eqref{entropy.solution.eqn2} in \eqref{chap70eqn6}, we have required inequality \eqref{definition.otto.eqn1}.
\end{proof}\\
\vspace{0.1cm}\\
\textbf{Step 3:} The following results are useful in proving Theorem \ref{Boundaryinequality.F.Ottothm1} and Lemma \ref{Boundaryinequality.F.Ottolem3}.
\begin{lemma}\cite[p.105]{Necas}\label{Boundaryinequality.F.Ottolem1}
Let $\delta >0$ be a real number and $g\in L^{\infty}\left((0,\delta)\right)$. Then the following 
\begin{enumerate}
\item[(1)] $\displaystyle\liminf_{n\to\infty}\,\left(n\int_{0}^{\frac{1}{n}}g(t)\,dt\right)\geq\,\mbox{ess}\displaystyle\liminf_{t\to 0+}\,g(t)$,
\item[(2)] $\displaystyle\limsup_{n\to\infty}\,\left(n\int_{0}^{\frac{1}{n}}g(t)\,dt\right)\leq\,\mbox{ess}\displaystyle\limsup_{t\to 0+}\,g(t)$,
\item[(3)] $\displaystyle\lim_{n\to\infty}\,\left(n\int_{0}^{\frac{1}{n}}g(t)\,dt\right)=\,\mbox{ess}\displaystyle\lim_{t\to 0+}\,g(t)$
\end{enumerate}
hold.
\end{lemma}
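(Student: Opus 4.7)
The plan is to argue directly from the definitions of essential limit inferior and essential limit superior at $0^+$, which I recall can be expressed as
\[
\operatorname{ess}\liminf_{t\to 0+} g(t) \;=\; \sup_{0<\delta'<\delta}\,\operatorname{ess}\inf_{t\in(0,\delta')} g(t),\qquad
\operatorname{ess}\limsup_{t\to 0+} g(t) \;=\; \inf_{0<\delta'<\delta}\,\operatorname{ess}\sup_{t\in(0,\delta')} g(t).
\]
Both quantities are finite real numbers because $g\in L^{\infty}((0,\delta))$, so nothing degenerates.

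For part (1), denote $L:=\operatorname{ess}\liminf_{t\to 0+}g(t)$. Given $\varepsilon>0$, the characterization above produces a $\delta'\in(0,\delta)$ with $\operatorname{ess}\inf_{t\in(0,\delta')}g(t)\geq L-\varepsilon$, i.e. $g(t)\geq L-\varepsilon$ for a.e. $t\in(0,\delta')$. Pick $N\in\mathbb{N}$ with $1/N<\delta'$; then for every $n\geq N$,
\[
n\int_0^{1/n} g(t)\,dt \;\geq\; n\int_0^{1/n}(L-\varepsilon)\,dt \;=\; L-\varepsilon.
\]
Taking $\liminf_{n\to\infty}$ and then letting $\varepsilon\to 0$ yields (1). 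Part (2) is the mirror image: starting from $M:=\operatorname{ess}\limsup_{t\to 0+}g(t)$, one chooses $\delta''\in(0,\delta)$ with $g(t)\leq M+\varepsilon$ a.e. on $(0,\delta'')$, bounds $n\int_0^{1/n}g(t)\,dt\leq M+\varepsilon$ for $1/n<\delta''$, passes to $\limsup$, and lets $\varepsilon\to 0$.

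For part (3), the hypothesis that $\operatorname{ess}\lim_{t\to 0+}g(t)$ exists means $\operatorname{ess}\liminf_{t\to 0+}g(t)=\operatorname{ess}\limsup_{t\to 0+}g(t)$; combining (1) and (2) then sandwiches $\liminf_{n\to\infty} n\int_0^{1/n}g$ and $\limsup_{n\to\infty} n\int_0^{1/n}g$ against this common value, so the ordinary limit exists and equals $\operatorname{ess}\lim_{t\to 0+}g(t)$.

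There is no genuine obstacle here; the only point requiring a little care is the choice of the right characterization of the essential one-sided liminf/limsup so that the a.e.\ inequality $g\geq L-\varepsilon$ (respectively $g\leq M+\varepsilon$) holds on an interval $(0,\delta')$ rather than merely on some set of full measure not containing an interval neighborhood of $0$. Once that characterization is in place, the estimates are a one-line computation since $n\cdot(1/n)=1$ converts the averaged integral of a constant into that constant exactly.
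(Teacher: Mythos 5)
Your proof is correct. The paper itself does not prove this lemma --- it is simply quoted from \cite[p.105]{Necas} --- so there is no in-paper argument to compare against; your derivation from the characterization $\operatorname{ess}\liminf_{t\to 0+}g=\sup_{0<\delta'<\delta}\operatorname{ess}\inf_{(0,\delta')}g$ (and its mirror for the $\limsup$), followed by the trivial averaging estimate on $(0,1/n)\subset(0,\delta')$ and the sandwich for part (3), is exactly the standard argument one would expect the cited reference to use, and every step checks out.
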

\begin{lemma}\label{Boundaryinequality.F.Ottolem2}
	Let $\delta >0$ be a real number and $g\in L^{\infty}\left((-\delta,0)\right)$. Then the following 
	\begin{enumerate}
		\item[(1)] $\displaystyle\liminf_{n\to\infty}\,\left(n\,\int_{-\frac{1}{n}}^{0}g(t)\,dt\right)\geq\,\mbox{ess}\displaystyle\liminf_{t\to 0+}\,g(t)$,
		\item[(2)] $\displaystyle\limsup_{n\to\infty}\,\left(n\,\int_{-\frac{1}{n}}^{0}g(t)\,dt\right)\leq\,\mbox{ess}\displaystyle\limsup_{t\to 0+}\,g(t)$,
		\item[(3)] $\displaystyle\lim_{n\to\infty}\,\left(n\,\int_{-\frac{1}{n}}^{0}g(t)\,dt\right)=\,\mbox{ess}\displaystyle\lim_{t\to 0+}\,g(t)$
	\end{enumerate}
	hold.
\end{lemma}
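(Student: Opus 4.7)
The plan is to reduce the lemma to Lemma \ref{Boundaryinequality.F.Ottolem1} by a simple reflection argument. Note that since the integral is taken over $(-1/n,0)$, the natural essential one-sided limits on the right-hand sides are those as $t\to 0^-$ (writing $0+$ in the statement is almost certainly a typographical carryover from the preceding lemma). I will treat the conclusion in that form.

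Concretely, I would set $\tilde g:(0,\delta)\to\mathbb{R}$ by $\tilde g(s):=g(-s)$, so that $\tilde g\in L^\infty((0,\delta))$ with the same essential bounds as $g$. The change of variable $s=-t$ gives
$$n\int_{-1/n}^{0} g(t)\,dt \;=\; n\int_{0}^{1/n}\tilde g(s)\,ds,$$
while $\mbox{ess}\liminf_{t\to 0^-}g(t)=\mbox{ess}\liminf_{s\to 0^+}\tilde g(s)$ and analogously for $\limsup$. Applying items (1)--(3) of Lemma \ref{Boundaryinequality.F.Ottolem1} to $\tilde g$ then yields (1)--(3) for $g$.

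If one prefers a direct proof bypassing the reduction, the argument simply mirrors that of Lemma \ref{Boundaryinequality.F.Ottolem1}. For (1), set $m:=\mbox{ess}\liminf_{t\to 0^-}g(t)$; for any $\varepsilon>0$ there exists $\eta\in(0,\delta)$ such that $g(t)\geq m-\varepsilon$ for a.e.\ $t\in(-\eta,0)$. For every $n>1/\eta$ we then have $n\int_{-1/n}^{0}g(t)\,dt\geq m-\varepsilon$, whence $\liminf_{n\to\infty} n\int_{-1/n}^{0}g(t)\,dt\geq m-\varepsilon$, and letting $\varepsilon\to 0$ gives (1). Item (2) follows by applying (1) to $-g$, and (3) is an immediate consequence of (1) and (2) together with the existence of the essential limit.

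There is no real obstacle in this proof; it is essentially Lemma \ref{Boundaryinequality.F.Ottolem1} composed with the reflection $t\mapsto -t$. The only minor point to verify is that the essential one-sided limit transforms correctly under this change of variable, which is immediate from the definition of \emph{essential} limits in terms of deleted neighbourhoods of zero.
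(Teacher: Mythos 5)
Your proposal is correct and matches the paper, which disposes of this lemma with the single remark that its proof is similar to that of Lemma \ref{Boundaryinequality.F.Ottolem1}; your reflection $t\mapsto -t$ is exactly the way to make that remark precise, and your direct argument mirrors the standard proof of the preceding lemma. Your reading of the right-hand sides as essential limits from the left (the $0+$ in the statement being a typographical carryover) is also the correct interpretation, since $g$ is only defined on $(-\delta,0)$.
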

The proof of Lemma\ref{Boundaryinequality.F.Ottolem2} is similar to the proof of Lemma \ref{Boundaryinequality.F.Ottolem1}. 
\begin{theorem}\label{Boundaryinequality.F.Ottothm1}
	Let $\left(H,Q\right)$ be boundary entropy-entropy flux pair. Then for all $\phi\in\mathcal{D}(\mathbb{R}^{d}\times\mathbb{R})\, ,\phi\geq 0$ and $k\in\mathbb{R}$, the following inequality
	\begin{equation}\label{Boundaryinequality.F.Otto.eqn111}
	\begin{split}
	-\left\{\int_{0}^{T}\int_{\Omega}H(u,k)\frac{\partial\phi}{\partial t}\,dx\,dt +\displaystyle\sum_{j=1}^{d}\int_{0}^{T}\int_{\Omega}Q_{j}(u,k)\,\frac{\partial\phi}{\partial x_{j}}\,dx\,dt\right\}\leq \int_{\Omega}H(u_{0},k)\,\phi(x,0)\,dx\\
	-\displaystyle\sum_{j=1}^{d}\mbox{ess}\displaystyle\liminf_{s\to 0-}\int_{0}^{T}\int_{\partial\Omega}Q_{j}(u(r+s\,\nu),k)\nu_{j}\,\phi(r)\,dr
	\end{split}
	\end{equation}
	holds.
\end{theorem}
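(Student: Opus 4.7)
The plan is to adapt Step 2 of Theorem \ref{paper2.compensatedcompactness.theorem1} using the boundary entropy pair $(H(\cdot,k),Q(\cdot,k))$ in place of the Kruzhkov pair, combined with a spatial cutoff that vanishes on $\partial\Omega$. Let $\zeta\in C^\infty(\mathbb{R})$ be nondecreasing with $\zeta\equiv 0$ on $(-\infty,1]$ and $\zeta\equiv 1$ on $[2,\infty)$, and set $\zeta_h(x):=\zeta(d(x,\partial\Omega)/h)$. Multiply \eqref{regularized.IBVP.a} by $\partial_1 H(u^\varepsilon,k)\phi\zeta_h$ and apply the chain rule to rewrite the convective side as $\partial_t H(u^\varepsilon,k)+\sum_j\partial_{x_j}Q_j(u^\varepsilon,k)$. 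Integration over $\Omega_T$ and integration by parts (assuming WLOG $\phi$ vanishes near $t=T$) produces the initial term $-\int_\Omega H(u_{0\varepsilon},k)\phi(x,0)\zeta_h\,dx$ but no $\partial\Omega$-boundary integral of $Q_j$ because $\zeta_h|_{\partial\Omega}=0$. On the viscous side a second integration by parts is likewise free of boundary contributions; convexity $\partial_{11}H(\cdot,k)\geq 0$ together with $\phi\zeta_h\geq 0$ isolates a non-positive entropy-production term that is discarded, and the remaining residuals in $(\phi\zeta_h)_{x_j}$ are bounded via Cauchy--Schwarz and Theorem \ref{Compactness.lemma.1} by $O(\sqrt\varepsilon)+O(\sqrt{\varepsilon/h})$.

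Fixing $h>0$ and letting $\varepsilon\to 0$, the $L^\infty$ bound of Theorem \ref{regularized.chap3thm1}, the a.e.\ convergence $u^\varepsilon\to u$ from Theorem \ref{Kinetic.Compactness.Result}, Lemma \ref{hypothesisDinitialdatalem} for the initial datum, and dominated convergence yield, after rearrangement,
\[
-\!\int_{\Omega_T}\!H(u,k)\zeta_h\phi_t\,dx\,dt-\sum_j\!\int_{\Omega_T}\!Q_j(u,k)\zeta_h\phi_{x_j}\,dx\,dt-\!\int_\Omega\!H(u_0,k)\phi(x,0)\zeta_h\,dx\leq\sum_j\!\int_{\Omega_T}\!Q_j(u,k)\phi(\zeta_h)_{x_j}\,dx\,dt.
\]
To pass $h\to 0^+$, dominated convergence handles the three left-hand terms since $\zeta_h\to 1$ pointwise in $\Omega$. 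For the right-hand side, use tubular coordinates $x=r-\tau\nu(r)$, $\tau=d(x,\partial\Omega)\in(0,s_0)$, so that $\nabla d=-\nu$ and $(\zeta_h)_{x_j}=-\zeta'(\tau/h)\nu_j/h$; the substitution $\sigma=\tau/h$ rewrites the right-hand side as $-\int_1^2\zeta'(\sigma)\tilde G(-h\sigma)\,d\sigma$, where
\[
\tilde G(s):=\sum_{j=1}^d\int_0^T\!\int_{\partial\Omega}Q_j(u(r+s\nu,t),k)\,\nu_j(r)\,\phi(r+s\nu,t)\,J(r,-s)\,dr\,dt,
\]
and $J\to 1$ as $s\to 0^-$. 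Since $\zeta'\geq 0$ and $\int_1^2\zeta'\,d\sigma=1$, a reverse-Fatou argument in the spirit of Lemma \ref{Boundaryinequality.F.Ottolem2} gives $\limsup_{h\to 0^+}\bigl[-\int_1^2\zeta'(\sigma)\tilde G(-h\sigma)\,d\sigma\bigr]\leq -\operatorname{ess\,liminf}_{s\to 0^-}\tilde G(s)$, and sub-additivity of $\operatorname{ess\,liminf}$ over the sum in $j$ then delivers exactly \eqref{Boundaryinequality.F.Otto.eqn111}.

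The chief obstacle is the interchange of the two limits. The viscous error $O(\sqrt{\varepsilon/h})$ forces the order ``$\varepsilon\to 0$ first, then $h\to 0$'', so the strong Dirichlet condition $u^\varepsilon|_{\partial\Omega}=0$ is unavailable in the final limit; since the $L^\infty$-limit $u$ has no classical trace on $\partial\Omega$, the boundary contribution must be extracted as a one-sided essential liminf of averages over the parallel hypersurfaces $\{r+s\nu:r\in\partial\Omega\}$ as $s\to 0^-$. The monotonicity $\zeta'\geq 0$ together with $\int_1^2\zeta'=1$ is precisely what converts the convex average into the one-sided essential liminf with the inequality oriented in the correct direction.
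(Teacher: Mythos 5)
Your argument is correct and coincides with the proof the paper is pointing to: the paper does not prove Theorem \ref{Boundaryinequality.F.Ottothm1} itself but defers to \cite[p.106]{Necas}, and your construction (testing the viscous equation with $\partial_1 H(u^\varepsilon,k)\phi\zeta_h$ for a boundary-layer cutoff $\zeta_h$, sending $\varepsilon\to 0$ first so the viscous residual $O(\sqrt{\varepsilon/h})$ vanishes, then sending $h\to 0$ and recovering the boundary flux as an essential liminf over parallel hypersurfaces via the monotone unit-mass weight $\zeta'$) is precisely that standard argument. The only points worth tightening are the ``WLOG $\phi$ vanishes near $t=T$'' (better justified by discarding the nonnegative term $\int_\Omega H(u^\varepsilon(\cdot,T),k)\phi(\cdot,T)\zeta_h\,dx$, using $H\geq 0$) and the final superadditivity step, where $-\mathrm{ess\,liminf}_{s\to 0-}\sum_j(\cdot)\leq -\sum_j\mathrm{ess\,liminf}_{s\to 0-}(\cdot)$ indeed points in the required direction.
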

A proof of Theorem \ref{Boundaryinequality.F.Ottothm1} can be found on the lines of \cite[p.106]{Necas}.\\
\vspace{0.1cm}\\
In order to obtain inequality \eqref{definition.otto.eqn2}, we need to prove the following result. 
\begin{lemma}\label{Boundaryinequality.F.Ottolem3}
Let $\left(H,Q\right)$ be a boundary entropy-entropy flux pair and $u\in L^{\infty}\left(\Omega_{T}\right)$. For all $\gamma\in\mathcal{D}(\Omega_{T})$, $\gamma\geq 0$ and all $k\in\mathbb{R}$, $u$ satisfy 
\begin{equation}\label{F.Ottoentropy.N.eqn12}
\int_{0}^{T}\int_{\Omega}\left\{H(u,k)\,\frac{\partial\gamma}{\partial t}\,+\,\displaystyle\sum_{0}^{T}Q_{j}(u,k)\,\frac{\partial\gamma}{\partial x_{j}}\right\}\,dx\,dt\geq 0.
\end{equation}
Then there exists a set $E$ of Lebesgue measure zero such that for all $\beta\in L^{1}(\Gamma)$, $\beta\geq 0$ and $v^{D}\in L^{\infty}(\Gamma)$
$$\displaystyle\lim_{s\to 0}\int_{\Gamma}Q(u\left(r+s\nu(r)\right),v^{D})\cdot\,\nu(r)\,\beta(r)\,dr$$
exists for $s\notin E$.\\
Moreover, let $M> 0$ be constant of Lipschitz continuity of $f$. For all $\gamma\in\mathcal{D}\left(\mathbb{R}^{d}\times (0,T)\right)$, $\gamma\geq 0$, all $k\in\mathbb{R}$ and for some $u^{D}$, $u$ satisfies 
\begin{equation}\label{F.Ottoentropy.N.eqn13}
-\int_{0}^{T}\int_{\Omega}\left\{H(u,k)\,\frac{\partial\gamma}{\partial t}\,+\,\displaystyle\sum_{0}^{T}Q_{j}(u,k)\,\frac{\partial\gamma}{\partial x_{j}}\right\}\,dx\,dt\leq M\,d\,\int_{\Gamma}\,H(u^{D},k)\,\gamma\,dr.
\end{equation}
Then for all $\beta\in L^{1}(\Gamma)$, $\beta\geq 0$, the following
\begin{equation}\label{F.Ottoentropy.N.eqn14}
\mbox{ess}\,\displaystyle\lim_{s\to 0}\int_{\Gamma}Q(u\left(r+s\nu(r)\right),v^{D})\cdot\,\nu(r)\,\beta(r)\,dr\geq 0
\end{equation} 
almost everywhere $s\in (-\infty,0)$.
\end{lemma}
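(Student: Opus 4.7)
The plan is to follow the strategy of Otto \cite{MR1387428} and the reference \cite{Necas}, adapting it to the $L^\infty$ setting used here. Since $\partial\Omega$ is smooth, there exists $s_0 > 0$ such that the map $(r, s) \mapsto r + s\nu(r)$ is a diffeomorphism from $\partial\Omega \times (-s_0, 0)$ onto a tubular neighborhood $V \subset \Omega$ of $\partial\Omega$; denote by $\pi$ the associated nearest-point projection $V \to \partial\Omega$ and by $\sigma(x)$ the signed distance to $\partial\Omega$ (negative inside $\Omega$). For $\beta \in L^1(\Gamma)$, $\beta \geq 0$ and $v^D \in L^\infty(\Gamma)$, set
\begin{equation*}
F(s) := \int_\Gamma Q\bigl(u(r + s\nu(r)), v^D(r)\bigr) \cdot \nu(r)\,\beta(r)\, dr,
\end{equation*}
which by Fubini is well-defined for almost every $s \in (-s_0, 0)$ and essentially bounded, since $u \in L^\infty(\Omega_T)$, $v^D \in L^\infty(\Gamma)$ and $Q$ is continuous.

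For the existence claim, my aim is to show that $F$ coincides almost everywhere on $(-s_0, 0)$ with a Lipschitz function, so that the essential limit at $0^-$ exists immediately. Fix $-s_0 < a < b < 0$ and take test functions $\gamma(x, t) = \wt\beta(\pi(x), t)\,\rho_\delta(\sigma(x))$ in \eqref{F.Ottoentropy.N.eqn12}, where $\wt\beta$ is a smooth extension of $\beta$ off $\Gamma$ and $\rho_\delta \in \mathcal{D}((a, b))$ approximates $\chi_{[a,b]}$. Decomposing $\nabla\gamma$ into its normal and tangential parts in the tubular coordinates and invoking the co-area formula with the Jacobian $\det(I - \sigma\,W)$, where $W$ denotes the Weingarten map of $\partial\Omega$, the normal-derivative piece $\rho_\delta'(\sigma)\nu$ produces, after passing $\delta \to 0$, precisely $F(b) - F(a)$; the remaining contributions, namely the time derivative of $\gamma$, the tangential derivatives of $\wt\beta$ and the Jacobian correction, are uniformly bounded in $L^1$ by the $L^\infty$ bound on $u$ from Theorem \ref{regularized.chap3thm1}, and so contribute at most $C|b-a|$. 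This yields $|F(b) - F(a)| \leq C|b-a|$ with $C$ independent of $a, b$, hence $F$ has a continuous representative on $(-s_0, 0]$ and the essential limit $F(0^-)$ exists outside a Lebesgue null set $E$.

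For the nonnegativity, I would apply the same family of test functions to the stronger inequality \eqref{F.Ottoentropy.N.eqn13}, now with $\rho_\delta$ approximating $\chi_{[s, 0]}$ for $-s_0 < s < 0$. The extra boundary term $Md \int_\Gamma H(u^D, k)\,\gamma\,dr$ on the right-hand side delivers an upper bound; setting $k = v^D(r)$ and using the boundary entropy identities $H(v^D, v^D) = 0$, $\partial_1 H(v^D, v^D) = 0$ and $Q(v^D, v^D) = 0$ from Definition \ref{Otto.Bentropy}, a Taylor expansion of $H$ and $Q$ around the diagonal $\{z = v^D\}$ shows that this upper bound is in fact $O(|s|^2)$, and hence negligible after dividing by $|s|$. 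Applying Lemma \ref{Boundaryinequality.F.Ottolem2} to the resulting averaged quantity in $s$, the inequality collapses to $F(0^-) \geq 0$, which is precisely \eqref{F.Ottoentropy.N.eqn14}.

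The main obstacle will be the tubular-coordinate analysis in the existence step: one must carefully split $\partial/\partial x_j$ into normal and tangential components, estimate the Jacobian $\det(I - \sigma(x) W(\pi(x)))$ uniformly for small $\sigma$, and verify that the tangential and time-derivative contributions of $\gamma$ are genuinely $O(|b-a|)$ in $L^1$, making essential use of the $L^\infty$ bound on $u$ and of the smoothness of $\wt\beta$. Once the Lipschitz estimate for $F$ is established, both the existence of the essential limit and its nonnegativity follow by standard limiting arguments combined with the boundary entropy identities of Definition \ref{Otto.Bentropy}.
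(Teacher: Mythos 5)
The paper itself gives no self-contained argument here: its proof of Lemma \ref{Boundaryinequality.F.Ottolem3} consists of the single observation that, once the inequality \eqref{F.Ottoentropy.N.eqn13} is supplied by Lemma \ref{Boundaryinequality.F.Ottolem41}, the proof of the corresponding result in \cite[p.115]{Necas} applies with the Lipschitz constant $M$ replaced by $Md$. Your proposal reconstructs that underlying argument, and its skeleton --- localization in a tubular neighbourhood with test functions of the form $\beta(\pi(x),t)\rho_\delta(\sigma(x))$, the co-area formula with the Jacobian of the normal map, and the boundary-entropy identities of Definition \ref{Otto.Bentropy} --- is the right one.

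There is, however, a genuine error in your existence step. The hypothesis \eqref{F.Ottoentropy.N.eqn12} is a one-sided inequality, valid only for nonnegative test functions, so your construction yields only the one-sided estimate $F(b)-F(a)\le C(b-a)$ for $a<b<0$; the reverse inequality is not available, because the nonpositive distribution $\partial_t H(u,k)+\mathrm{div}_x\, Q(u,k)$ is a (signed) measure that may concentrate, e.g.\ on shocks approaching $\partial\Omega$, so its mass in the collar $\{a<\sigma<b\}$ need not be $O(b-a)$. Hence $F$ need not be Lipschitz, and the claim $|F(b)-F(a)|\le C|b-a|$ is unjustified. The conclusion is still reachable, and this is how the cited reference argues: the one-sided estimate says that $s\mapsto F(s)-Cs$ is essentially non-increasing and bounded (since $u\in L^{\infty}(\Omega_T)$, $Q$ is continuous and $\beta\in L^{1}(\Gamma)$), hence has a limit as $s\to 0^-$, so $F$ has an essential limit; you should replace the Lipschitz claim by this monotonicity argument. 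A second, smaller, inaccuracy is in the nonnegativity step: the right-hand side of \eqref{F.Ottoentropy.N.eqn13} is a boundary integral that does not depend on $s$ at all, so it cannot be ``$O(|s|^2)$''; it is removed not by a Taylor expansion but exactly, by taking $k=u^{D}$ (here $u^{D}=0$) and using $H(w,w)=0$ from Definition \ref{Otto.Bentropy}, after which Lemma \ref{Boundaryinequality.F.Ottolem1} or Lemma \ref{Boundaryinequality.F.Ottolem2} applied to the averaged quantity yields \eqref{F.Ottoentropy.N.eqn14}.
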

The main difficulty in proving Lemma \ref{Boundaryinequality.F.Ottolem3} is to prove inequality \eqref{F.Ottoentropy.N.eqn13} which is satisfied by the boundary entropy-entropy flux pair. We overcome this difficulty by proving the following result. \begin{lemma}\label{Boundaryinequality.F.Ottolem41}
	Let $\left(H,Q\right)$ be boundary entropy-entropy flux pair and $u\in L^{\infty}(\Omega_{T})$. Then
	\begin{enumerate}
		\item[(1)] for all $\phi\in\mathcal{D}(\mathbb{R}^{d}\times\mathbb{R})$, $\phi\geq 0$ and $k\in\mathbb{R}$, the following inequality
		\begin{equation}\label{F.Ottoentropy.N.eqn29}
		\begin{split}
		-\left\{\int_{0}^{T}\int_{\Omega}\left(H(u,k)\,\frac{\partial\phi}{\partial t} + \displaystyle\sum_{j=1}^{d}Q_{i}(u,k)\,\frac{\partial\phi}{\partial x_{j}}\right)\right\}\,dx\,dt\leq \int_{\Omega}\,H(u_{0},k)\,\phi(x,0)\,dx \\+ Md\,\int_{\Gamma}H(u,k)\,\phi\,dr.
		\end{split}
		\end{equation}
		holds. 
		\item[(2)] for all $\phi\in\mathcal{D}(\mathbb{R}^{d}\times(0,T))$, $\phi\geq 0$ and $k\in\mathbb{R}$, the following inequality
		\begin{equation}\label{F.Ottoentropy.N.eqn30}
		\begin{split}
		-\left\{\int_{0}^{T}\int_{\Omega}\left(H(u,k)\,\frac{\partial\phi}{\partial t} + \displaystyle\sum_{j=1}^{d}Q_{i}(u,k)\,\frac{\partial\phi}{\partial x_{j}}\right)\right\}\,dx\,dt\leq M\,d\,\int_{\Gamma}H(u,k)\,\phi\,dr.
		\end{split}
		\end{equation}
		holds. 
	\end{enumerate}	
\end{lemma}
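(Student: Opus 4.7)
The plan is to derive both inequalities directly from Theorem \ref{Boundaryinequality.F.Ottothm1} by controlling its boundary term through a pointwise comparison between $Q_{j}(\cdot, k)$ and $H(\cdot, k)$. Theorem \ref{Boundaryinequality.F.Ottothm1} already yields, for $\phi\in\mathcal{D}(\mathbb{R}^{d}\times\mathbb{R})$ with $\phi\geq 0$, the inequality with an explicit boundary contribution $\mathcal{B}(\phi) := -\sum_{j=1}^{d}\mbox{ess}\liminf_{s\to 0-}\int_{0}^{T}\int_{\partial\Omega}Q_{j}(u(r+s\nu),k)\,\nu_{j}\,\phi(r)\,dr$. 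The whole task thus reduces to bounding $\mathcal{B}(\phi)$ by $Md\int_{\Gamma}H(u,k)\phi\,dr$.

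The essential step is the pointwise estimate $|Q_{j}(u,k)| \leq \|f_{j}'\|_{L^{\infty}(\mathbb{R})}\,H(u,k)$ for all $u,k\in\mathbb{R}$. Since $(H(\cdot,k), Q(\cdot,k))$ is an entropy-entropy flux pair one has $\partial_{1}Q_{j}(r,k) = \partial_{1}H(r,k)\,f_{j}'(r)$, and the boundary-entropy conditions give $Q_{j}(k,k)=0$. Integration produces $Q_{j}(u,k) = \int_{k}^{u}\partial_{1}H(r,k)\,f_{j}'(r)\,dr$. Because $H(\cdot,k)$ is convex with $\partial_{1}H(k,k)=0$, the derivative $\partial_{1}H(r,k)$ has the same sign as $r-k$; treating the cases $u>k$ and $u<k$ separately one obtains in either case
\[
\int_{k}^{u}\partial_{1}H(r,k)\,dr \;=\; H(u,k)-H(k,k) \;=\; H(u,k)\;\geq\;0,
\]
from which $|Q_{j}(u,k)| \leq \|f_{j}'\|_{L^{\infty}(\mathbb{R})}\bigl|\int_{k}^{u}|\partial_{1}H(r,k)|\,dr\bigr| = \|f_{j}'\|_{L^{\infty}(\mathbb{R})}H(u,k)$.

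Applying this bound on $\partial\Omega$, with $|\nu_{j}|\leq 1$ and $M := \max_{j}\|f_{j}'\|_{L^{\infty}(\mathbb{R})}$, we get $-\sum_{j}Q_{j}(u(r+s\nu),k)\nu_{j}\leq Md\,H(u(r+s\nu),k)$. Using $-\mbox{ess}\liminf_{s\to 0-}a_{s} = \mbox{ess}\limsup_{s\to 0-}(-a_{s})$ and monotonicity of the essential limsup, one concludes
\[
\mathcal{B}(\phi) \;\leq\; Md\,\mbox{ess}\limsup_{s\to 0-}\int_{0}^{T}\int_{\partial\Omega}H(u(r+s\nu),k)\phi(r)\,dr \;=\; Md\int_{\Gamma}H(u,k)\phi\,dr,
\]
where the last equality is the paper's convention in the $L^{\infty}$-setting for $\int_{\Gamma}H(u,k)\phi\,dr$, consistent with the essential-limit interpretation of boundary traces used in Lemma \ref{Boundaryinequality.F.Ottolem3}. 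Substituting into Theorem \ref{Boundaryinequality.F.Ottothm1} yields \eqref{F.Ottoentropy.N.eqn29} and completes part (1). For part (2), choosing $\phi\in\mathcal{D}(\mathbb{R}^{d}\times(0,T))$ automatically forces $\phi(\cdot,0)\equiv 0$, so the initial contribution $\int_{\Omega}H(u_{0},k)\phi(x,0)\,dx$ drops out and \eqref{F.Ottoentropy.N.eqn30} follows.

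The main obstacle is the case analysis for the pointwise inequality $|Q_{j}(u,k)| \leq M\,H(u,k)$: the sign of $\partial_{1}H(r,k)$ flips at $r=k$, so the orientations $u>k$ and $u<k$ must be handled separately, and the identity $\int_{k}^{u}\partial_{1}H(r,k)\,dr = H(u,k)$ depends crucially on both boundary-entropy conditions $H(k,k)=0$ and $\partial_{1}H(k,k)=0$ (the second condition is what pins the minimum of the convex function $H(\cdot,k)$ at $k$). A secondary subtlety is giving precise meaning to $\int_{\Gamma}H(u,k)\phi\,dr$ for $L^{\infty}$ solutions lacking classical traces, handled above via the essential limsup over inward normal shifts, in parallel with the trace-type objects in Lemma \ref{Boundaryinequality.F.Ottolem3}.
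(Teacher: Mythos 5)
Your proposal is correct and follows essentially the same route as the paper: both derive the pointwise bound $|Q_{j}(z,k)|\leq M\,H(z,k)$ from $Q_{j}(z,k)=\int_{k}^{z}\partial_{1}H(r,k)f_{j}'(r)\,dr$ together with the sign of $\partial_{1}H(\cdot,k)$ forced by convexity and $\partial_{1}H(k,k)=H(k,k)=0$, then feed this into the boundary term of Theorem \ref{Boundaryinequality.F.Ottothm1} via the essential liminf/limsup, and obtain part (2) by restricting to test functions vanishing at $t=0$.
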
	
\begin{proof}
	For $k\in\mathbb{R}$ and $j=1,2,\cdots,d$, denote 
	$$\eta(z):=H(z,k)\,\, ,\,\,q_{j}(z):=Q_{j}(z,k)\,\,\mbox{and}\,\,q:=\left(q_{1},q_{2},\cdots,q_{d}\right).$$
	Then for $j=1,2,\cdots,d$ and $z\in\mathbb{R}$, $\left(\eta,q\right)$ satisfies 
	$$q_{j}^{\prime}(z)=\eta^{\prime}(z)f_{j}^{\prime}(z).$$
	Since $q_{j}(k)=0$, therefore $q_{j}(z)=\int_{k}^{z}\,\eta^{\prime}(s)f_{j}^{\prime}(s)\,ds$. 
	For $j=1,2,\cdots,d$, observe that
	\[
	q_{j}(z)=
	\begin{cases}
	\int_{k}^{z}\,\eta^{\prime}(s)f_{j}^{\prime}(s)\,ds & \text{if $z\geq k$}\\
	-\int_{z}^{k}\,\eta^{\prime}(s)f_{j}^{\prime}(s)\,ds & \text{if $z < k$}
	\end{cases}
	\]	
	Since $\eta^{\prime\prime}(z)\geq 0$ for $z\in\mathbb{R}$, therefore $\eta^{\prime}:\mathbb{R}\to\mathbb{R}$ is an increasing function. Since $H$ is a boundary entropy, we have
	$\eta^{\prime}(k)=0$. Therefore for $z\geq k$, we have $\eta^{\prime}(z)\geq 0$ and for $z\leq k$, we have $\eta^{\prime}(z)\leq 0$. Therefore we obtain $-\eta^{\prime}(z)\geq 0$ for $z<k$.
	For $\beta\in L^{1}(\Gamma)$, $\beta\geq 0$ and {\it a.e.} $r\in\Gamma$, observe that
	\[
	\left|q_{j}(u(r +s\nu(r)))\,\nu_{j}(r)\,\beta(r)\right|\,\leq
	\begin{cases}
	M\,\left(\int_{k}^{u(r +s\nu(r))}\,\eta^{\prime}(s)\,ds\right)\beta(r) & \text{if $u(r +s\nu(r))\geq k$}\\
	M\left(\int_{u(r +s\nu(r))}^{k}\,-\eta^{\prime}(s)\,ds\right)\beta(r) & \text{if $u(r +s\nu(r)) < k$}
	\end{cases}
	\] 
	Therefore we have
	$$\left|q_{j}(u(r +s\nu(r)))\,\nu_{j}(r)\,\beta(r)\right|\leq M\,\eta\left(u(r+s\nu(r))\right)\,\beta(r).$$
	This shows that for $j=1,2,\cdots,d$, the following holds:
	\begin{equation}\label{F.Ottoentropy.N.eqn32}
	q_{j}(u(r +s\nu(r)))\,\nu_{j}(r)\,\beta(r) \geq - M\,\eta\left(u(r+s\nu(r))\right)\,\beta(r).
	\end{equation}
	Integrating \eqref{F.Ottoentropy.N.eqn32} over $\Gamma$, we get
	\begin{equation}\label{F.Ottoentropy.N.eqn33}
	\int_{\Gamma}\,q_{j}(u(r +s\nu(r)))\,\nu_{j}(r)\,\beta(r)\,dr \geq - M\,\int_{\Gamma}\eta\left(u(r+s\nu(r))\right)\,\beta(r)\,dr.
	\end{equation}
	Taking $\mbox{ess}\,\liminf$ as $s\to 0-$ on both sides of \eqref{F.Ottoentropy.N.eqn33}, we obtain
	\begin{equation}\label{F.Ottoentropy.N.eqn34}
	\mbox{ess}\,\displaystyle\liminf_{s\to 0-}\int_{\Gamma}\,q_{j}(u(r +s\nu(r)))\,\nu_{j}(r)\,\beta(r)\,dr \geq - \mbox{ess}\,\displaystyle\limsup_{s\to 0-}\int_{\Gamma}\,M\eta\left(u(r+s\nu(r))\right)\,\beta(r)\,dr.
	\end{equation}
	Summing \eqref{F.Ottoentropy.N.eqn34} over $j=1,2,\cdots,d$, we have
	\begin{equation}\label{F.Ottoentropy.N.eqn35}
	-\displaystyle\sum_{j=1}^{d}\mbox{ess}\,\displaystyle\liminf_{s\to 0-}\int_{\Gamma}\,q_{j}(u(r +s\nu(r)))\,\nu_{j}(r)\,\beta(r)\,dr \leq M\,d\,\int_{\Gamma}\,\eta\left(u(r)\right)\,\beta(r)\,dr.
	\end{equation}
	Using \eqref{F.Ottoentropy.N.eqn35}in \eqref{Boundaryinequality.F.Otto.eqn111}, we conclude inequality \eqref{F.Ottoentropy.N.eqn29}.\\
	\vspace{0.1cm}\\
	In particular taking $\phi\in\mathcal{D}(\mathbb{R}^{d}\times (0,T))$ in \eqref{F.Ottoentropy.N.eqn29}, we conclude \eqref{F.Ottoentropy.N.eqn30}$\blacksquare$\\
\end{proof}
\vspace{0.1cm}\\
\textbf{Proof of Lemma \ref{Boundaryinequality.F.Ottolem3}:}
Since we have inequality \eqref{F.Ottoentropy.N.eqn13} in Lemma \ref{Boundaryinequality.F.Ottolem41}, therefore a proof of Lemma \ref{Boundaryinequality.F.Ottolem3} can be obtained by following similar proof of a result from \cite[p.115]{Necas}. The only difference in the proof of Lemma \ref{Boundaryinequality.F.Ottolem3} is that we replace the Lipschitz constant $M$ in an inequality of \cite[p.115]{Necas} by another Lipschitz constant $M\,d$ on RHS of inequality \eqref{F.Ottoentropy.N.eqn13}$\blacksquare$\\
\vspace{0.1cm}\\
The following result is used to show that the limit $u$ satisfies the initial data in $L^{1}(\Omega)$ sense.
\begin{lemma}\cite[p.118]{Necas}\label{Boundaryinequality.F.Ottolem4}
Let $\left(u,u_{0}\right)\in L^{\infty}(\Omega_{T})\times L^{\infty}(\Omega)$ satisfy
\begin{equation}\label{F.Ottoentropy.N.eqn281}
-\int_{0}^{T}\int_{\Omega}\left\{\left|u-k\right|\,\frac{\partial\beta}{\partial t} +\displaystyle\sum_{j=1}^{d}F_{j}(u)\,\frac{\partial\beta}{\partial x_{j}}\right\}\,dx\,dt\leq \int_{\Omega}\left|u_{0}-k\right|\beta(0)\,dx	
\end{equation}	
for all $\beta\in\mathcal{D}((-\infty,T)\times\Omega)$ with $\beta\geq 0$ and all $k\in\mathbb{R}$. Then we have
$$\mbox{ess}\displaystyle\lim_{t\to 0+}\int_{\Omega}\left|u(x,t)-u_{0}(x)\right|\,dx=0$$
\end{lemma}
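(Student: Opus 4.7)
The plan is to extract an approximate propagation inequality at a late time $\tau > 0$ small, then convert it into $L^1$ control of $u(\cdot,\tau) - u_0$ through a partition argument that effectively takes $k$ to be a piecewise-constant approximation of $u_0(x)$.

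First, I would plug separable test functions $\beta(x,t) = \alpha_{\tau,\delta}(t)\phi(x)$ into \eqref{F.Ottoentropy.N.eqn281}, where $\phi\in\mathcal{D}(\Omega)$ with $\phi\geq 0$, and $\alpha_{\tau,\delta}$ is a smoothed cutoff with $\alpha_{\tau,\delta}\equiv 1$ on $(-\infty,\tau-\delta)$, linearly decreasing to $0$ on $[\tau-\delta,\tau]$, and $\equiv 0$ on $[\tau,T)$. Since $\partial_t\alpha_{\tau,\delta}$ concentrates as $-\tfrac{1}{\delta}\chi_{(\tau-\delta,\tau)}$, Lebesgue's differentiation theorem gives, for a.e. $\tau\in(0,T)$, after letting $\delta\to 0$,
\begin{equation*}
\int_{\Omega}|u(x,\tau)-k|\,\phi(x)\,dx \leq \int_{\Omega}|u_{0}(x)-k|\,\phi(x)\,dx + \sum_{j=1}^{d}\|F_{j}(u)\|_{L^{\infty}}\int_{0}^{\tau}\int_{\Omega}|\partial_{x_{j}}\phi|\,dx\,dt.
\end{equation*}
Since $u\in L^{\infty}$ forces $F_{j}(u)$ to be bounded, the last sum is $\mathcal{O}(\tau)$, so
\begin{equation*}
\limsup_{\tau\to 0+}\int_{\Omega}|u(x,\tau)-k|\,\phi(x)\,dx \;\leq\; \int_{\Omega}|u_{0}(x)-k|\,\phi(x)\,dx. \qquad (\star)
\end{equation*}

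Next, I would upgrade $(\star)$ from a fixed constant $k$ to a spatially varying approximation of $u_{0}$. Fix $\delta>0$ and partition the essential range of $u_{0}$ into finitely many intervals $I_{i}=[k_{i},k_{i+1})$ of length $\leq\delta$; set $E_{i}:=\{x\in\Omega:u_{0}(x)\in I_{i}\}$, so that the $E_{i}$ are disjoint and their union is $\Omega$. Pick $\Omega'\subset\subset\Omega$, and for each $i$ construct smooth non-negative $\phi^{i}_{\eta}\in\mathcal{D}(\Omega)$ approximating $\chi_{E_{i}\cap\Omega'}$ in $L^{1}$ as $\eta\to 0$. Applying $(\star)$ with $k=k_{i}$ and $\phi=\phi^{i}_{\eta}$, summing over $i$, and sending $\eta\to 0$ yields
\begin{equation*}
\limsup_{\tau\to 0+}\sum_{i}\int_{E_{i}\cap\Omega'}|u(x,\tau)-k_{i}|\,dx \;\leq\; \sum_{i}\int_{E_{i}\cap\Omega'}|u_{0}(x)-k_{i}|\,dx \;\leq\; \delta\,|\Omega'|.
\end{equation*}
The triangle inequality $|u(x,\tau)-u_{0}(x)|\leq|u(x,\tau)-k_{i}|+|u_{0}(x)-k_{i}|$ on each $E_{i}$ then gives $\limsup_{\tau\to 0+}\int_{\Omega'}|u(x,\tau)-u_{0}(x)|\,dx\leq 2\delta|\Omega'|$. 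Letting $\delta\to 0$ kills the interior contribution, and letting $\Omega'\nearrow\Omega$ kills the boundary remainder using $\|u\|_{L^{\infty}}+\|u_{0}\|_{L^{\infty}}<\infty$ to bound the integrand uniformly on $\Omega\setminus\Omega'$ by a constant times $|\Omega\setminus\Omega'|$.

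The main technical obstacle lies in step two: the entropy inequality only gives information with a constant $k$, yet the conclusion requires comparison with $u_{0}(x)$. The partition-plus-mollification device is standard but delicate, because one must (i) keep $\phi^{i}_{\eta}$ supported strictly inside $\Omega$ so that it is admissible in $\mathcal{D}((-\infty,T)\times\Omega)$, (ii) ensure the spill-over across the sets $\partial E_{i}$ under mollification does not destroy the telescoping sum, and (iii) dispose of the truncation $\Omega\setminus\Omega'$ using only $L^{\infty}$ bounds (no trace of $u$ at $t=0$ is available a priori). All three points are handled by the dominated convergence theorem once $\phi^{i}_{\eta}$ are chosen as mollifications of $\chi_{E_{i}\cap\Omega'}$ with mollification parameter smaller than the distance from $\Omega'$ to $\partial\Omega$.
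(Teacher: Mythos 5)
Your argument is essentially sound, but note that the paper itself offers no proof of this lemma: it is imported verbatim with the citation to \cite[p.118]{Necas}, so there is nothing in the paper to compare against line by line. Your proposal supplies a self-contained proof along the standard route for recovering the initial trace of Kruzhkov-type entropy solutions: a time cutoff concentrating $\partial_t\beta$ near $t=\tau$ to produce the comparison $\int_\Omega|u(\cdot,\tau)-k|\phi\leq\int_\Omega|u_0-k|\phi+\mathcal{O}(\tau)$ at a.e.\ $\tau$, followed by a passage from constant $k$ to the spatially varying $u_0(x)$. Your device for that second passage (partitioning the essential range of $u_0$ into finitely many intervals of length $\delta$ and mollifying the indicators of the level sets $E_i\cap\Omega'$) is a legitimate alternative to the more common doubling/mollification in the $x$-variable, and the uniform $L^\infty$ bounds do let you commute the $\eta\to 0$ limit past the $\tau$-limsup and dispose of the boundary strip $\Omega\setminus\Omega'$. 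Two points deserve explicit care. First, the exceptional null set of $\tau$ in the Lebesgue-point step depends on $k$ and $\phi$; you should fix a single full-measure set of good times by running the argument over a countable dense set of $k$ (using that $k\mapsto\int_\Omega|u(\cdot,t)-k|\phi$ is $1$-Lipschitz uniformly in $t$) and over the countable family of test functions actually used, and the piecewise-linear $\alpha_{\tau,\delta}$ must be smoothed to be admissible in $\mathcal{D}((-\infty,T)\times\Omega)$. Second, the order of limits is essential and you implicitly get it right: $\|\nabla\phi^i_\eta\|_{L^1}$ may blow up as $\eta\to 0$ (the $E_i$ need not have finite perimeter), so the flux remainder must be killed by $\tau\to 0$ at fixed $\eta$, i.e.\ $(\star)$ must be established before mollifying the indicators; stating this explicitly would make the proof airtight.
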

\vspace{0.1cm}
\textbf{Step 4:} Applying Lemma \ref{Boundaryinequality.F.Ottolem3} with  $u^{D}=0$, we conclude \eqref{definition.otto.eqn2}.\\
\vspace{0.1cm}\\
\textbf{Step 5:} Observe that \eqref{F.Ottoentropy.N.eqn29} holds for $\left(\eta_{l},q_{l}\right)$ introduced in Remark\ref{Boundary.Entropy.F.Otto.rmk1}. Then passing to the limit as $l\to\infty$, we conclude \eqref{F.Ottoentropy.N.eqn281}. Applying Lemma \ref{Boundaryinequality.F.Ottolem4}, we conclude 
\eqref{definition.otto.eqn3}. \\
Therefore $u$ is an entropy solution in the sense of Otto \cite{MR1387428}.\\
\vspace{0.1cm}\\
\textbf{Step 6:} The uniqueness of entropy solutions follows from \cite[p.113]{Necas}$\blacksquare$\\
\vspace{0.1cm}\\
\textbf{Proof of Theorem\ref{paper2.compensatedcompactness.theorem2}:} The proof of Theorem\ref{paper2.compensatedcompactness.theorem2} is similar to the proof of Theorem\ref{paper2.compensatedcompactness.theorem1}$\blacksquare$
\vspace{0.1cm}
\textbf{Proof of Theorem\ref{paper2.compensatedcompactness.theorem3}:} The proof of Theorem\ref{paper2.compensatedcompactness.theorem3} is similar to the proof of Theorem\ref{paper2.compensatedcompactness.theorem1}$\blacksquare$\\
\vspace{0.1cm}\\
\textbf{Acknowledgement: The Author Ramesh Mondal thanks his Ph.D supervisor S. Sivaji Ganesh of Indian Institute of Technology Bombay for fruitful discussions and suggestions throughout this work. All the discussions and suggestions made the author write this article. Ramesh Mondal also thanks CSIR, New Delhi, India as the work of section 3 was done when he was a CSIR SRF at Indian Institute of Technology Bombay, India.}

\newpage
\bibliographystyle{plain}

\end{document}